\def\N{\mathbb{N}}
\def\Z{\mathbb{Z}}
\def\R{\mathbb{R}}
\def\E{\mathbb{E}}
\begin{document}
\title[$SO(N)$ Lattice Gauge Theory]{ $SO(N)$ Lattice Gauge Theory, planar and beyond}

\author{Riddhipratim Basu}
\address{Riddhipratim Basu, Department of Mathematics, Stanford University, Stanford, CA, USA}
\email{rbasu@stanford.edu}
\author{Shirshendu Ganguly}
\address{Shirshendu Ganguly, Department of Statistics, UC Berkeley, Berkeley, CA, USA}
\email{sganguly@berkeley.edu}
\date{}

\begin{abstract}
Lattice Gauge theories have been studied in the physics literature as discrete approximations to quantum Yang-Mills theory for a long time. Primary statistics of interest in these models are expectations of the so called ``Wilson loop variables". In this article we continue the program initiated by Chatterjee \cite{Cha15} to understand Wilson loop expectations in Lattice Gauge theories in a certain limit through gauge-string duality. The objective in this paper is to better understand the underlying  combinatorics in the strong coupling regime, by giving a more geometric picture of string trajectories involving correspondence to objects such as decorated trees and non-crossing partitions.
Using connections with Free Probability theory, we provide an elaborate description of loop expectations in the planar setting, which provides certain insights about structures of higher dimensional trajectories as well. Exploiting this, we construct an example showing that in any dimension, the Wilson loop area law lower bound does not hold in full generality.
\end{abstract}

\maketitle
\setcounter{tocdepth}{1}

\section{Introduction}
Matrix integrals are known to provide canonical models for generating family of combinatorial objects relevant to studying physical systems \cite{thooft, zvonkin}. The connection of Gaussian integral with enumeration of maps has been classically studied \cite{BIPZ}, and it has been believed \cite{thooft} that similar topological expansions should hold for more general models invariant under unitary conjugation. Much of this theory has become mathematically well-founded due to extensive work by Guionnet and coauthors \cite{G04,GM07,CGM09} in the last decade where asymptotics of orthogonal and unitary matrix integrals have been studied in great detail. 

In physics literature one of the motivations for studying Gibbs measure on matrices has been to understand the so called ``Lattice Gauge theories". These were introduced by Wilson \cite{Wilson} as a mathematically well-defined approximation to quantum Yang-Mills theories, the basic building blocks of the Standard Model of quantum mechanics. Statistics of interest in these models are expectations (under the Gibbs measure) of certain variables called ``Wilson loop variables". Approximate computations of the the loop expectations was suggested by 't Hooft \cite{thooft}, in what came to be known as the 't Hooft limit, using connections between matrix integrals and enumeration of planar maps. As mentioned above much of this connection had now been made rigorous; however, computing formulae for loop expectations, had largely remained open until recently.

In his recent seminal work, Chatterjee \cite{Cha15} solved this problem for a Euclidean lattice Gauge theory with Gauge group $SO(N)$ in the large $N$ limit, and provided an asymptotic formula for loop expectations in terms of a ``lattice string theory", thus establishing rigorously one of the first examples of ``gauge-string duality". Chatterjee's method hinges on making rigorous, a set of equations known as ``Makeenko-Migdal equations" in physics. In a later work, Chatterjee and Jafarov \cite{CJ16}, generalized this result and proved a $\frac{1}{N}$ expansion of the loop expectations under strong coupling. We refer the interested reader to \cite{Cha15, CJ16, CGM09} for more background on this. 

\noindent \textbf{Our contributions:}   This article begins by giving a complete combinatorial description of the planar model using the above machinery. The expression Chatterjee obtains for the loop expectation in 't Hooft limit is given, under strong coupling, by a power series in the inverse coupling constant $\beta$. Our work starts with the observation that one can explicitly compute the power series in dimension two,  for a large class of loops.  

For $SO(N)$ lattice gauge theory on the plane, the structure of the Hamiltonian turns out to be invariant under conjugation by elements of $O(N)$.  Such invariance properties allow us to show asymptotic freeness of the underlying matrices and thereby use many combinatorial  tools and identities from Free Probability theory involving objects such as non-crossing partitions. These are used to  to analyze expectations of  rather complicated loop variables. 
Using the notion of free cumulants, one can show in fact that  the power series expansions (in $\beta$) of the loop expectations, contain only finitely many terms, i.e. they are polynomials.

We introduce a class of decorated trees, which are used  to give a geometric description of certain carefully rooted  version of string trajectories appearing in \cite{Cha15}.
Then exploiting Chatterjee's  gauge-string duality, we compute the loop expectation for all simple loops and show that the limiting expression is $\beta^{k}$ where $k$ is the area enclosed by the loop (see Section \ref{defresult} for formal definitions). 

Obtaining insight from the planar picture we provide a general correspondence between trajectories appearing in \cite{Cha15} and non-crossing partitions in all dimensions.
Using the above we  provide an example, showing that for certain non-simple `cancelling' loops, the loop expectations can decay faster than exponentially in the area enclosed. This provides a partial negative answer to a question of Chatterjee \cite{Cha15} in any dimension, regarding area law lower bounds for Wilson loop variables. To the best of our knowledge this is the first rigorous computational result in high dimensional lattice gauge theory. 
\begin{remark}\label{reparam}
Soon after this work was completed, Jafarov posted \cite{JJ16} in which he proves results similar to \cite{Cha15} and \cite{CJ16} when the gauge group is $SU(N).$ In this work among other things, he establishes (see \cite[Corollary 4.4]{JJ16}) that, in the strong coupling regime, the Wilson loop expectations for inverse coupling constant $\beta$ in the $SO(N)$ theory exactly matches those in the $SU(N)$ theory for inverse coupling constant $2\beta$. Thus our main results have natural versions for the $SU(N)$ theory once the appropriate re-parametrization is done. 
\end{remark}
The exactly solvable nature of the model in two dimensions makes the model mathematically more tractable. Results analogous to some in this article, in the two dimensional $U(N)$ lattice gauge theory appear as semi-rigorous work in the physics literature (see \cite{GW80, Wadia}), where the arguments mostly rely on mean field approximations to asymptotic eigenvalue distributions.\footnote{Since the submission of this paper, it has come to our attention that the arguments in \cite{GW80, Wadia}  have been formalized in \cite{LDPunitary} by a method extremely special to the planar case and  quite different from the general string theoretic approach taken in this paper. The main steps in \cite{LDPunitary} include proving a large deviation principle for a certain class of Gibbs measures on the Unitary group and solving the associated variational problem.
}  Even though we are inspired by the approach in the above works (see `Gross-Witten trick' in the proof of Lemma \ref{varchange} in Section \ref{fa}), we emphasize that our approach in this paper is purely geometric, with the motivation to go beyond the planar setting using relations to non-crossing partitions, decorated trees etc.  We believe proper random surface analogues of these would be useful in depicting the picture in higher dimensions.

It must be  mentioned here, that another class of measures have been studied extensively with a view to build the continuum Yang-Mills theory on the plane. These are based on the heat kernel of Brownian Motion on compact Lie groups and rigorously analyzing Makeenko-Migdal equations \cite{MM} in this setting. This approach was developed simultaneously by Gross, King, and Sengupta in \cite{GKS} and  Driver in \cite{Dr}. Using the remarkable result about asymptotic free limit of the such diffusions \cite{Biane95}, Makeenko-Migdal equations for the continuum model was solved in \cite{Levy11}. Certain moment computations of a similar flavour as in this article also appear in that paper. The analysis of Makeenko-Migdal equations in this context has recently been greatly simplified in \cite{DHK16}.

\section{Definitions and Main results}\label{defresult}
We now move towards formal definitions of the model. We shall follow closely the terminology and notation introduced in \cite{Cha15}. Consider the two dimensional Euclidean lattice $\Z^2$ with nearest neighbour edges. Let $E$ denote the set of all directed edges. Consider the lexicographic ordering of vertices in $\Z^2$. Call a directed edge in $E$ positively oriented if the ending point of the edge is greater than the starting point of the edge in the lexicographic ordering. Let $E^{+}$ denote the set of all positively oriented edges. For an edge $e\in E^{+}$, we shall denote the reverse edge by $e^{-1}$. A plaquette $p=e_1e_2e_3e_4$ is a closed loop of length four containing four distinct edges. A plaquette is called positively oriented if the smallest and the second smallest vertex contained in the plaquette occur in that order. We identify plaquettes that are cyclically equivalent, i.e., $e_1e_2e_3e_4$ and $e_2e_3e_4e_1$ will be considered to be the same plaquette. 

\subsection{Gibbs Measure}
For $N\in \N$, let $SO(N)$ denote the special orthogonal group of $N\times N$ orthogonal matrices, with real entries and determinant one. Fix a finite subset $\Lambda$ of $\Z^2$. Let $E_{\Lambda}$ (resp.\ $E_{\Lambda}^{+}$) denote the set of edges in $E$ (resp.\ in $E^{+}$) with both endpoints contained in $\Lambda$. Let $\mathcal{P}_{\Lambda}$ (resp.\ $\mathcal{P}_{\Lambda}^{+}$) denote the set of all plaquettes (resp.\ positively oriented plaquettes) having all the edges in $E_{\Lambda}$. 

For $\beta\in \R$, we consider the Gibbs measure $\mu_{\Lambda, N, \beta}$ on the space of configurations $Q=(Q_{e})_{e\in E_{\Lambda}^{+}}$ of matrices in $SO(N)$ defined as follows. Let $\sigma_{N}$ denote the Haar probability measure on the group $SO(N)$. Let $\sigma_{\Lambda,N}$ denote the product Haar measure on the space of $SO(N)$ matrices indexed by edges in $E_{\Lambda}^{+}$, i.e.,
\begin{equation}\label{prodhaar}
 d \sigma_{\Lambda, N}(Q)=\prod_{e\in E_{\Lambda}^{+}} d\sigma_{N}(Q_{e}).
 \end{equation}
Define the Gibbs measure $\mu_{\Lambda, N, \beta}$ by the density, 
\begin{equation}
\label{e:gibbs}
\dfrac{d \mu_{\Lambda, N, \beta}}{d \sigma_{\Lambda, N}}(Q)=Z_{\Lambda, N, \beta}^{-1} \exp \biggl( N\beta \sum_{p\in \mathcal{P}_{\Lambda}^{+}} {\rm Tr}(Q_{p})\biggr),
\end{equation}
where $Q_p=Q_{e_1}Q_{e_2}Q_{e_3}Q_{e_4},$\footnote{As we identify both $e_1e_2e_3e_4$ and $e_2e_3e_4e_1$ as the same plaquette $p,$ the matrix $Q_p$ is not quite well defined and is defined only up to a conjugation. However \eqref{e:gibbs} only depends on $Q_p$ through its trace which is well defined. Later in this article we will be more specific about our definitions of $Q_p$ to suit our arguments. For a general loop $\ell$ we shall define the matrix $Q_{\ell}$ similarly and follow the same convention. Whenever necessary we shall specifically mention the starting point and ending point of a loop $\ell$, and the definition of $Q_{\ell}$ will be accordingly interpreted in that context.} for $p=e_1e_2e_3e_4\in \mathcal{P}_{\Lambda}^{+},$ and $Q_{e^{-1}}=Q_{e}^{-1}$ for $e\in E_{\Lambda}^{+}$, and $Z_{\Lambda, N, \beta}$ denotes the normalizing constant.
This measure describes a lattice gauge theory on $\Lambda$ for the gauge group $SO(N)$. The parameter $\beta$ is called the inverse \emph{coupling constant} of the model.

\begin{remark}Since the bulk of the paper treats the planar case we define the Gibbs measure and state most of  the results from \cite{Cha15} in two dimensions. In \cite{Cha15}, Chatterjee deals with the more general $d$ dimensional $SO(N)$ Lattice Gauge theory, where the Gibbs measure is defined exactly as in \eqref{e:gibbs} by taking $\Lambda$ to be a subset of $\Z^d.$ All results in \cite{Cha15} (natural analogues of what we quote here) are valid in all dimensions. We point out that one of the main observations in this paper holds in any dimension, (see Proposition \ref{p:lbcounterex}). 
\end{remark}

\subsection{Wilson Loops}\label{wil}
One of the primary objects of interest in lattice gauge theories are Wilson loop variables and their expected values under the Gibbs measure. 
A walk is a sequence of edges $e_1,e_2,\ldots e_n$ where the end point of $e_i,$ is the starting point of $e_{i+1}$ for $1\le i\le n-1.$
A walk is said to be closed if the end point of $e_n$ is the same as the starting point of $e_1.$ A non backtracking walk is a walk with no backtracks i.e.\ $e_i \neq e^{-1}_{i+1}$ for all $i.$

For a loop\footnote{We will also formally consider the null loop i.e., which has no edges, and use $\emptyset$ to denote it.} (non-backtracking closed walk) $\ell=e_1e_2\cdots e_n,$ the Wilson loop variable is defined by 
$$W_{\ell}={\rm Tr}(Q_{e_1}\cdots Q_{e_n}).$$ 

\begin{remark} We can obtain a non-backtracking loop starting from a closed walk by performing \emph{backtrack erasure}, i.e.,\ sequentially deleting pairs of consecutive edges that are reverses of each other. Often in this article we will call a closed walk, a loop, even though it will have backtracks.  It will be explicitly mentioned when we do so and  there will be no scope for confusion. Clearly the product of the matrices $Q_{e_i},$ along a closed walk and its backtrack erasure are the same, so considering closed walks would not affect the results. 
\end{remark}
 
\begin{defn}
Throughout this article, we will say a loop $\ell=e_1e_2\cdots e_n,$ is simple if all the endpoints of  $e_i$'s are distinct. 
\end{defn} 
By the Jordan Curve Theorem, a simple loop divides the plane into two components, one bounded and one unbounded. The bounded component is a union of unit squares which we shall often identify with the plaquettes that form their boundaries and refer to these plaquettes as \emph{plaquettes contained in the interior of a simple loop}.

For notational convenience, we also define: $Q_{\ell}:=Q_{e_1}\cdots Q_{e_n}$ for a loop $\ell$. If the edges of the loop all belong to $E_{\Lambda},$ we define $\langle W_{\ell}\rangle _{\Lambda, N, \beta}$ to be the expected value of $W_{\ell},$ under the Gibbs measure $\mu_{\Lambda, N, \beta}$. In his seminal work \cite{Cha15}, Chatterjee showed that in the strong coupling regime (i.e., when $|\beta|$ is small), the loop expectations, properly scaled, converge as $N\to \infty$. The main result of Chatterjee \cite{Cha15}, simplified to our setting is the following.

\begin{thm}[Theorem 3.1, \cite{Cha15}]
\label{t:looplimit}
Consider a sequence of subsets $\Lambda_1, \Lambda_2, \ldots$ increasing to $\Z^2$. Then there exists $\beta_0>0$ such that for $|\beta|<\beta_0$ and for all loops $\ell$, we have
$$\lim_{N\to \infty} \frac{\langle W_{\ell} \rangle _{\Lambda_{N}, N, \beta}}{N}= w(\ell, \beta),$$
exists.
\end{thm}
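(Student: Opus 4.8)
The plan is to establish the existence of the limit $w(\ell,\beta)$ by deriving, for each fixed $\Lambda$ and $N$, a closed system of equations — the Makeenko--Migdal (Schwinger--Dyson) equations — that the loop expectations $\langle W_\ell\rangle_{\Lambda,N,\beta}$ satisfy, and then showing this system, suitably rescaled by $1/N$, passes to a well-defined limit as $N\to\infty$ and $\Lambda\uparrow\Z^2$. The key structural input is integration by parts on $SO(N)$: for a Haar-distributed (or Gibbs-distributed) matrix variable $Q_e$ attached to an edge $e$, differentiating along the Lie algebra $\mathfrak{so}(N)$ and using invariance of the Haar measure produces an identity relating $\langle W_\ell\rangle$ to (a) a \emph{deformation term} coming from the $N\beta\sum_p \mathrm{Tr}(Q_p)$ factor in the density \eqref{e:gibbs}, which inserts a plaquette $p$ into the loop at the edge $e$, and (b) \emph{splitting/merging terms} coming from the quadratic Casimir, which either split $\ell$ into two loops or merge two strands, wherever the edge $e$ or its reverse appears a second time along $\ell$. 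Schematically this yields, for loops in which some edge appears, an identity of the form
\begin{equation*}
(\text{const})\,\langle W_\ell\rangle = N\beta \sum_{p}\bigl(\langle W_{\ell_1 p^{\pm}}\rangle - \langle W_{\ell_2}\rangle\bigr) \;+\; \sum \bigl(\pm \langle W_{\ell'}W_{\ell''}\rangle \;\pm\; \langle W_{\ell'''}\rangle\bigr),
\end{equation*}
and the crucial point is that every loop appearing on the right is obtained from $\ell$ by an operation that reduces a suitable complexity measure (total edge multiplicity, or number of plaquettes needed to fill it), so the recursion is well-founded.

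The second step is the large-$N$ factorization: one must show $\langle W_{\ell'}W_{\ell''}\rangle_{\Lambda,N,\beta} = \langle W_{\ell'}\rangle_{\Lambda,N,\beta}\langle W_{\ell''}\rangle_{\Lambda,N,\beta} + O(1)$, so that after dividing through by $N$ the quadratic terms become products of the rescaled quantities and the system closes on $\langle W_\ell\rangle/N$ alone. This is standard in the matrix-model heuristics but needs a genuine proof here; it follows from a variance bound obtained by applying the same integration-by-parts machinery to $\langle W_{\ell'}W_{\ell''}\rangle$ and bounding correlations, valid in the strong-coupling regime where the series converges absolutely. Having closed the system, I would then set up a \emph{string expansion}: iterate the Makeenko--Migdal recursion, expanding each $\langle W_\ell\rangle/N$ as a power series in $\beta$, with the coefficient of $\beta^k$ being a finite signed count of sequences of plaquette-deformation and splitting operations (``string trajectories'') of total length related to $k$ that reduce $\ell$ to the null loop. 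One shows this series converges for $|\beta|<\beta_0$ by a crude counting bound on the number of such trajectories (at most exponential in $k$), uniformly in $\Lambda$ and $N$; the $N\to\infty$ limit of each coefficient exists because, for fixed $k$, all trajectories of length $\le$ const$\cdot k$ live in a fixed finite region once $\Lambda$ is large enough, and the $1/N$ corrections (from non-planar contractions) vanish term by term.

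The main obstacle I anticipate is controlling the recursion carefully enough to get \emph{uniform-in-$N$-and-$\Lambda$} convergence of the string expansion: the integration-by-parts identities produce, at each step, a sum over all plaquettes adjacent to a chosen edge and over all repeated-edge positions, so a naive bound loses a factor growing with the size of $\Lambda$ or with the loop length unless one organizes the expansion so that each generated loop is ``anchored'' near the original loop and the branching factor is genuinely bounded. This is precisely where one needs the combinatorial reorganization into trajectories/decorated trees that the introduction promises: choosing a canonical edge at which to integrate by parts (e.g.\ the lexicographically first edge of the current loop), so that the recursion tree has bounded degree and depth linear in $k$, giving a clean geometric series bound $\sum_k (C\beta)^k < \infty$ for $|\beta| < \beta_0 := 1/C$. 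Once that uniform bound is in hand, dominated convergence (in the counting sense) delivers both the existence of $\lim_{N\to\infty}$ and its independence of the chosen exhaustion $\Lambda_N$, and identifies $w(\ell,\beta)$ as the sum over the limiting (infinite-volume) trajectory count — which is exactly the lattice-string-theory object that the rest of the paper analyzes combinatorially.
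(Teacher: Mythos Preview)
This theorem is not proved in the present paper: it is quoted verbatim from Chatterjee \cite{Cha15} (as the label ``Theorem 3.1, \cite{Cha15}'' indicates), and the paper uses it as a black box. So there is no ``paper's own proof'' to compare against; the relevant comparison is to Chatterjee's argument, which the paper only summarizes informally in Section~\ref{rec1}.

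That said, your sketch is a faithful outline of Chatterjee's actual strategy as described there: integration by parts (Stein's exchangeable pair in Chatterjee's version) on $SO(N)$ yields the finite-$N$ master loop equation with splitting/merger and deformation terms; one then organizes the iteration as a sum over vanishing trajectories, proves the series converges for $|\beta|$ small by bounding the number of trajectories, and obtains the limit and its $\Lambda$-independence. One point of emphasis you should be aware of: Chatterjee's finite-$N$ equation also contains \emph{merger} and \emph{twisting} terms (not just the splitting and deformation you list), and these carry explicit prefactors of $1/N$; it is precisely the vanishing of those $1/N$ terms that gives both the large-$N$ factorization and the closure of the recursion on single-loop quantities, rather than a separate variance estimate as you suggest. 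Also, the uniqueness/well-posedness of the limiting recursion is obtained by a contraction argument (as in \cite{CGM09}), not purely by a termination-of-recursion argument --- the paper notes explicitly that positive deformations can enlarge a loop, so the recursion is not a priori well-founded in the naive sense you describe.
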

Chatterjee's Theorem also contains an expression of $w(\ell,\beta)$ in terms of his lattice string theory, which involves summing over weights of non-vanishing loop trajectories associated with the loop $\ell$.

One can also consider a product of Wilson loop variables for a sequence of loops, and Chatterjee proves an asymptotic factorization property for such products of loop variables.

\begin{thm}[Corollary 3.2, \cite{Cha15}] 
\label{t:factor}
In the set-up of Theorem \ref{t:looplimit}, consider a sequence of loops $(\ell_1, \ell_2,\ldots, \ell_n)$. We have
$$\lim_{N\to \infty} \frac{\langle W_{\ell_1} W_{\ell_2}\cdots W_{\ell_n} \rangle _{\Lambda_{N}, N, \beta} }{N^{n}}= \lim_{N\to \infty}\prod_{i=1}^{n} \frac{\langle  W_{\ell_i} \rangle  _{\Lambda_{N}, N, \beta}}{N}$$
for $|\beta|<\beta_0$.
\end{thm}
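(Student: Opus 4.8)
The plan is to induct on the number of loops $n$, the case $n=1$ being trivial. Granting Theorem~\ref{t:looplimit} and the induction hypothesis, it suffices to prove the asymptotic decorrelation
\[
\langle W_{\ell_1}\cdots W_{\ell_n}\rangle_{\Lambda_N,N,\beta}
= \langle W_{\ell_1}\cdots W_{\ell_{n-1}}\rangle_{\Lambda_N,N,\beta}\,\langle W_{\ell_n}\rangle_{\Lambda_N,N,\beta} + o(N^n)
\]
for $|\beta|<\beta_0$; dividing by $N^n$ and letting $N\to\infty$ then gives the statement. Note that $Q_\ell\in SO(N)$ forces $|W_\ell|\le N$ deterministically, so every expectation appearing is $O(N^n)$ and the only real issue is the size of the connected correlator $\langle W_{\ell_1}\cdots W_{\ell_n}\rangle-\langle W_{\ell_1}\cdots W_{\ell_{n-1}}\rangle\langle W_{\ell_n}\rangle$; I will argue it is in fact $O(N^{n-2})$.

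To see the $N^{-2}$ suppression I would return to the mechanism underlying Theorem~\ref{t:looplimit}, namely the loop (Makeenko--Migdal / Schwinger--Dyson) equations: choosing an edge $e$ of $\ell_n$ and integrating by parts on $SO(N)$ against infinitesimal rotations $Q_e\mapsto e^{tA}Q_e$ produces an exact identity expressing $\langle\prod_i W_{\ell_i}\rangle$ as an explicitly $N$-weighted finite sum of terms of three types: \emph{deformations} of $\ell_n$, \emph{splittings} of $\ell_n$ into two loops, and \emph{mergers} of $\ell_n$ with some $\ell_j$ ($j<n$) that also traverses $e$ or $e^{-1}$. The decoupled terms (deformations and splittings of $\ell_n$ alone) reassemble, upon iterating the recursion, into exactly the recursion that characterizes $w(\ell_n,\beta)$ times $\langle\prod_{i<n}W_{\ell_i}\rangle$, whereas each merger between the $\ell_n$-block and the $\{\ell_1,\dots,\ell_{n-1}\}$-block joins two index loops into one and hence costs a relative factor $N^{-2}$ (the familiar fact that the connected two-loop function is $O(1)$ while each loop is $O(N)$). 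In the strong coupling regime $|\beta|<\beta_0$ the recursion converges absolutely, uniformly in $N$ and in the exhausting sequence $\Lambda_N$, because the combinatorial multiplicity of newly generated terms is dominated by powers of $|\beta|$; hence the total contribution of histories that ever perform such a merger is $O(N^{n-2})$, giving the claimed bound.

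Equivalently, and perhaps more transparently, one can work directly with the multi-loop version of Chatterjee's lattice string expansion: $\langle\prod_i W_{\ell_i}\rangle/N^n$ is an absolutely convergent sum over ``multi-loop'' string trajectories, and trajectories split according to whether the strings emanating from $\ell_n$ ever meet those emanating from $\ell_1,\dots,\ell_{n-1}$. The ``decoupled'' trajectories are in weight-preserving bijection with pairs consisting of a trajectory for $\ell_n$ and one for $(\ell_1,\dots,\ell_{n-1})$, and therefore sum to $w(\ell_n,\beta)\cdot\lim_N\langle\prod_{i<n}W_{\ell_i}\rangle/N^{n-1}$ by Theorem~\ref{t:looplimit} and the induction hypothesis; each ``coupled'' trajectory carries an extra $N^{-2}$, and the coupled part is summable in the strong coupling regime, hence negligible.

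The main obstacle is precisely this last assertion: controlling, uniformly in $N$ and in $\Lambda_N$, the total weight of all coupled (merger) configurations to every order of the recursion, so that the $N^{-2}$ gained at each inter-block interaction is not overwhelmed by the exponentially growing number of terms the recursion generates. This is where strong coupling is indispensable; granting the absolute-convergence estimates that already power Theorem~\ref{t:looplimit} (together with the deterministic bound $|W_\ell|\le N$, which keeps every partial sum $O(N^n)$), the remaining work is careful $N$-power bookkeeping rather than new analytic input.
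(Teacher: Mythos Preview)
This theorem is not proved in the present paper at all: it is quoted verbatim as Corollary~3.2 of \cite{Cha15} and used as a black box, so there is no ``paper's own proof'' to compare your proposal against.

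That said, your sketch is broadly faithful to the mechanism in \cite{Cha15}. The finite-$N$ master loop equation there does contain, in addition to the splitting and deformation terms, \emph{merger} terms that join $\ell_n$ with another $\ell_j$ sharing an edge, and each such merger indeed carries an extra factor of $N^{-2}$ relative to the leading terms. The factorization in the 't~Hooft limit is then a consequence of the absolute-convergence estimates that underlie Theorem~\ref{t:looplimit}. Your identification of the ``main obstacle'' is accurate: the work in \cite{Cha15} is precisely to show that the contraction/weight estimates controlling the string expansion are uniform enough that the $N^{-2}$ suppression survives summation over all trajectories. You have correctly located where the analytic content lies, but your write-up stops at the point where the real proof begins; what you have is an outline of the strategy rather than a proof.
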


The expression $w(\ell, \beta)$ obtained by Chatterjee is not explicit and is expressed in terms of a lattice string theory; in particular as a sum of weights of certain strings, (see \cite{Cha15} for more details). As a consequence Chatterjee provides an alternative description of the limiting expectations of the loop variables, which will be more useful to us. In the strong coupling regime, he proves that the the limiting loop expectations are real analytic, and have an absolutely convergent power series expansion,
\begin{equation}
\label{e:series}
w(\ell,\beta)=\sum_{k=0}^{\infty} a_k(\ell)\beta^{k}.
\end{equation}
One of our main results in this article is to evaluate these power series for a sufficiently large class of loops, see Theorem \ref{t:plaquette} and Theorem \ref{t:simple} below.

\subsection{Area Law Bounds}
\label{s:area}
One question of interest in physics is to understand how the loop expectations vary with the area enclosed by a loop. To introduce the results formally we need to define the area of a loop formally through the language of 1-chain and 2-chains in cell complexes. We are again following the treatment in \cite{Cha15} and introduce the following definitions that we need. For our purposes, the 1-chains are elements of the free $\Z$-module over $E^{+}$, and the two chains are the elements of the free $\Z$-module over $\mathcal{P}^{+}$. Observe that any $p\in \mathcal{P}^{+},$ can be written uniquely as $p=e_1e_2e_3^{-1}e_4^{-1},$ where $e_1,e_2,e_3,e_4,$ are all in $E^{+}$. The standard differential map $\delta$ from the module of 2-chains to the module of 1-chains takes $p$ to $e_1+e_2-e_3-e_4$. For a loop $\ell=e_1e_2\cdots e_n,$ define $r(\ell)=\sum_{i=1}^{n} r(e_i),$ where $r(e_i)=e_i$ or $-e^{-1}_i,$ depending on whether $e_i\in E^+,$ or not. Observe that $r(\ell)$ is well defined under cyclical equivalence. Now for a 2-chain $x,$ call $\ell$ to be the boundary of $x$ if $\delta(x)=r(\ell)$.  Finally we define the area of a 2-chain $x=\sum_{p\in \mathcal{P}^{+}} \eta_{p} p$ by, 
\begin{equation}\label{area}
{\rm area}(x):=\sum_{p} |\eta_{p}|.
\end{equation}
Define the area of a loop $\ell$, denoted by ${\rm area}(\ell),$ to be the minimum of ${\rm area}(x),$ over all $x,$ such that the boundary of $x$ is $\ell$, (it follows from the standard facts about cell complexes in $\Z^2,$ that for any loop this is a well defined quantity). 

\textbf{Examples:}
\begin{enumerate}
\item For a simple loop $\ell,$ the area of $\ell,$ is simply the number of plaquettes contained in the interior of $\ell$.
\item There can be non-simple non-null loops of area $0$. For two oriented adjacent plaquettes $a,b$ ($a^{-1}$ and $b^{-1}$ denote the plaquettes in the opposite orientation). Fix a point $x$ shared by both $a$ and $b$. Now consider the loop started from $x$ denoted by  $aba^{-1}b^{-1}$ or it being repeated $k$ times. Note that after tracing out either $a,b,a^{-1},b^{-1}$ the loop is at $x.$ See Fig \ref{f:lex}.

\begin{figure}[h]
\centering
\includegraphics[width=.5\textwidth]{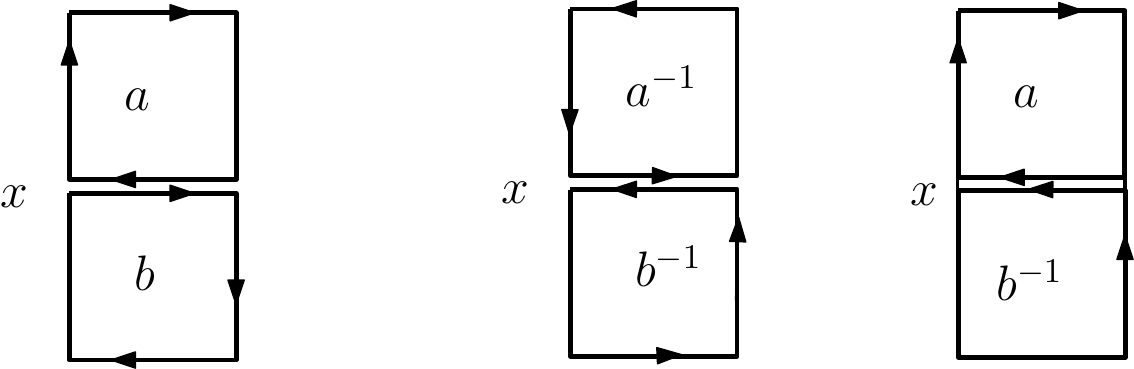}
\caption{The loop $aba^{-1}b^{-1}$ formed by tracing out adjacent plaquettes $a,b$ along different orientations yield a non-simple non-null loop of area $0.$}
\label{f:lex}
\end{figure}

It is easy to see that the boundary of $aba^{-1}b^{-1}$ is actually zero and hence the area of such a loop is zero. 
Such examples will be analyzed later in a discussion regarding general area law lower bound for loops  (see Proposition \ref{p:lbcounterex}) according to this definition of area.
\end{enumerate}

A lattice gauge theory is said to satisfy an area law upper bound if 
$$\langle W_{\ell} \rangle \leq C_1e^{-C_2 {\rm area}(\ell)}$$
where $C_1$ and $C_2$ are constants that depend on the gauge group and the inverse coupling strength $\beta$. The theory is said to satisfy area law lower bound if the reverse inequality holds with possibly different constants, i.e.,
$$\langle W_{\ell} \rangle \geq C'_1e^{-C'_2 {\rm area}(\ell)}.$$
The area law upper bound has some connections with the theory of quirk confinement \cite{Wilson}. For the strong coupling regime in which Theorem \ref{t:looplimit} and Theorem \ref{t:factor} hold, Chatterjee also proves the area law upper bound for the $SO(N)$ lattice gauge theory in the large $N$ limit for all `non-canceling' loops\footnote{A loop is called non-canceling, if there is no edge $e$ in the loop such that $e^{
-1},$
is also in the
loop.}.

\begin{thm}[Corollary 3.3, \cite{Cha15}]
\label{t:areaub}
In the setting of Theorem \ref{t:looplimit}, one has for all non-canceling loops $\ell$,
$$\lim_{N\to \infty}\frac{|\langle W_{\ell} \rangle|}{N}\leq (C|\beta|)^{{\rm area}(\ell)},$$
for some absolute constant $C$.
\end{thm}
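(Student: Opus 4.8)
The plan is to read the bound off from the lattice string representation of $w(\ell,\beta)$ furnished by Theorem~\ref{t:looplimit}: that result expresses $w(\ell,\beta)=\sum_{X}{\rm wt}(X)$ as a sum over loop trajectories $X$ that start at $\ell$ and end at the null loop, each carrying a weight ${\rm wt}(X)=(-1)^{s(X)}\beta^{n(X)}$ in which $n(X)$ counts the plaquette--insertion steps along $X$ and $s(X)\in\{0,1\}$. Grouping trajectories according to the value of $n(X)$ recovers the power series \eqref{e:series} with $a_k(\ell)=\sum_{X:\,n(X)=k}(-1)^{s(X)}$, so it suffices to establish two facts: \textbf{(A)} every trajectory satisfies $n(X)\ge{\rm area}(\ell)$, whence $a_k(\ell)=0$ for $k<{\rm area}(\ell)$; and \textbf{(B)} $|a_k(\ell)|\le C^{\,k}$ for an absolute constant $C$ and all $k\ge{\rm area}(\ell)$. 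Granting these, for $|\beta|<1/(2C)$,
\[
\lim_{N\to\infty}\frac{|\langle W_{\ell}\rangle_{\Lambda_{N},N,\beta}|}{N}=|w(\ell,\beta)|\le\sum_{k\ge{\rm area}(\ell)}(C|\beta|)^{k}\le 2\,(C|\beta|)^{{\rm area}(\ell)}\le(2C|\beta|)^{{\rm area}(\ell)},
\]
the last step using that a non-null non-canceling loop has ${\rm area}(\ell)\ge 1$ (if ${\rm area}(\ell)=0$ the minimizing $2$--chain in \eqref{area} is $0$, so $r(\ell)=0$, which forces every edge of $\ell$ to occur together with its reverse, i.e.\ $\ell$ is canceling or null); the null loop is immediate. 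This is the claimed bound with $C$ replaced by $2C$, so the work is in \textbf{(A)} and \textbf{(B)}.

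For \textbf{(A)} the plan is a soft homological argument. To a trajectory $X$ I would associate the $2$--chain $x_X\in\bigoplus_{p\in\mathcal{P}^{+}}\Z p$ obtained by adding the signed generator $\pm q$ each time the plaquette $q$ is inserted along $X$. The $1$--chain $r(\cdot)$ of the (collection of) loops visited by $X$ changes by exactly $\delta(\pm q)$ at such an insertion step and is unchanged by the remaining, purely topological moves (splitting, merging, backtrack erasure); since $X$ terminates at the null loop, summing these increments gives $\delta(x_X)=r(\ell)$, i.e.\ $\ell$ is a boundary of $x_X$. The triangle inequality for the norm in \eqref{area} then gives ${\rm area}(x_X)\le n(X)$, while ${\rm area}(\ell)\le{\rm area}(x_X)$ by definition of ${\rm area}(\ell)$ as a minimum over $2$--chains with boundary $\ell$. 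Hence $n(X)\ge{\rm area}(\ell)$, as desired.

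For \textbf{(B)} I would combine an elementary geometric estimate with a coefficient bound extracted from Chatterjee's proof of Theorem~\ref{t:looplimit}. Geometrically: for a non-canceling $\ell$ every edge occurs with a single orientation and $e^{-1}$ never occurs, so $|\ell|=\|r(\ell)\|_{1}=\|\delta(x)\|_{1}\le 4\,\|x\|_{1}$ for any $2$--chain $x$ with $\delta(x)=r(\ell)$; minimizing over $x$ gives $|\ell|\le 4\,{\rm area}(\ell)\le 4k$. On the other hand, the convergence of the string series behind \eqref{e:series}, with radius independent of $\ell$, comes with a quantitative bound of the shape $|a_k(\ell)|\le C_1^{\,k}C_2^{\,|\ell|}$ (equivalently, there are at most $C_1^{\,k}C_2^{\,|\ell|}$ non-vanishing trajectories with $k$ insertions: such a trajectory has length $O(k+|\ell|)$, with a controlled number of continuations available at each step). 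Combining the two estimates, $|a_k(\ell)|\le C_1^{\,k}C_2^{\,4k}=(C_1C_2^{4})^{k}$ for $k\ge{\rm area}(\ell)$, which is \textbf{(B)}. The step I expect to be the main obstacle is exactly this coefficient/trajectory count with a constant \emph{independent of $\ell$}: it rests on the comparability $|\ell|=O({\rm area}(\ell))$ for non-canceling loops together with quantitative control of how far a loop can grow along a trajectory between the ``paid'' plaquette--insertion steps. Step \textbf{(A)}, by contrast, is routine once the correspondence between insertion steps and $2$--chains is set up.
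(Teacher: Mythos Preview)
This theorem is not proved in the present paper; it is quoted from \cite{Cha15} (it is Corollary~3.3 there). So there is no ``paper's own proof'' to compare against. That said, your outline is precisely how Chatterjee obtains the bound, and the two ingredients you isolate are in fact quoted verbatim later in this paper: your step~\textbf{(A)} is Lemma~\ref{min1} (Lemma~14.1 of \cite{Cha15}), and the coefficient bound $|a_k(\ell)|\le C_1^{\,k}C_2^{\,|\ell|}$ you invoke in~\textbf{(B)} is Lemma~10.1 of \cite{Cha15} (quoted here as Lemma~\ref{growthrate100} for a special family of loops). Your homological argument for~\textbf{(A)} and the perimeter--area comparison $|\ell|\le 4\,{\rm area}(\ell)$ for non-canceling loops are both correct and match Chatterjee's.

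The one place where your sketch is looser than the actual proof is the justification of the coefficient bound in~\textbf{(B)}. The heuristic ``a trajectory has length $O(k+|\ell|)$ with a bounded number of continuations at each step'' does not quite work as stated: positive deformations and positive splittings can in principle make the number of steps large, and one has to control this. Chatterjee does not prove Lemma~10.1 by direct trajectory counting; he proves it by induction on $k$ via the (unsymmetrized) master loop equation, exploiting that the recursion~\eqref{rec10} expresses $a_k(\ell)$ as a bounded linear combination of $a_k$'s and $a_{k-1}$'s of loop sequences whose total length differs from $|\ell|$ by $O(1)$. So your plan is right, but you should replace the crude trajectory-length heuristic by that inductive argument (or simply cite the lemma).
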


 Whether an area lower bound exists in any dimensions was left as an open question in \cite{Cha15}. In two dimensions, using spectral methods a general lower bound was given for rectangles for certain lattice gauge theories by Seiler \cite{Sei}. We complement this result by showing that for a large class of loops in two dimensions (including all simple loops) area law lower bound does indeed hold in the large $N$ limit, (see Corollary \ref{c:arealb}). However, by considering certain `canceling' loops, we show that the area law lower bound is not true in general in any dimension, (see Proposition \ref{p:lbcounterex}) at least with the definition of the area stated above.

\subsection{Main Results}
Our objective in this manuscript is to study the loop expectations in the large $N$ limit (known as 't Hooft limit) and explicitly evaluate the limiting loop expectations for various classes of loops in the strong coupling regime in two dimensional $SO(N)$ lattice gauge theory. {With the exception of Proposition \ref{p:lbcounterex}, all of the following results are for the case $d=2$.} Our first result computes the limiting loop expectation $w(\ell,\beta)$ in the case where $\ell$ is a plaquette.

\begin{thm}
\label{t:plaquette}
Assume the set-up of Theorem \ref{t:looplimit}, and assume $\beta$ is sufficiently small such that the conclusion in that theorem holds. Then we have for a plaquette $p,$
$$w(p,\beta)=\beta.$$
\end{thm}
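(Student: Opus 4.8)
The plan is to compute $w(p,\beta)$ directly from its power series expansion \eqref{e:series}, $w(p,\beta)=\sum_{k\ge 0} a_k(p)\beta^k$, by evaluating the low-order coefficients and then arguing that all higher-order coefficients vanish. The key point is that for a single plaquette $p$, the Wilson loop expectation $\langle W_p\rangle_{\Lambda,N,\beta}$ can be analyzed very explicitly: expanding $\exp(N\beta\sum_{p'}{\rm Tr}(Q_{p'}))$ in powers of $\beta$ and using $\int_{SO(N)} d\sigma_N$ together with standard Weingarten-type (or moment) computations for Haar-distributed orthogonal matrices, one sees that only trajectories that ``tile'' the plaquette contribute, and the combinatorics is governed by the decorated-tree / non-crossing-partition description advertised in the introduction. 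Concretely, I would first show $a_0(p)=0$ (since at $\beta=0$ the edges are independent Haar, and $\E[{\rm Tr}(Q_{e_1}Q_{e_2}Q_{e_3}Q_{e_4})]/N\to 0$ because a single Haar orthogonal matrix has $\E[Q_e]=0$ entrywise and the four edges are distinct and independent), and then show $a_1(p)=1$: the first-order term comes from bringing down exactly one factor of $N\beta\,{\rm Tr}(Q_{p})$, and $\E\bigl[{\rm Tr}(Q_p){\rm Tr}(Q_p)\bigr]$ for $Q_p=Q_{e_1}Q_{e_2}Q_{e_3}Q_{e_4}$ with independent Haar factors; the leading term in $N$ of this second-moment computation gives $N$, so after dividing by $N$ and accounting for the $N$ in front, the contribution is $\beta$. (Bringing down a factor $N\beta\,{\rm Tr}(Q_{p'})$ for a plaquette $p'\ne p$ contributes nothing since it introduces new independent edges with vanishing mean.)

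The substantive step is to show $a_k(p)=0$ for all $k\ge 2$. Here I would invoke the free-probability/asymptotic-freeness machinery stressed in the introduction: conjugation invariance of the Hamiltonian under $O(N)$ forces the edge matrices to be asymptotically free in the large-$N$ limit, and the limiting object associated to a single plaquette loop is (after the appropriate rooting) a word whose contribution is captured by free cumulants. Because a plaquette is a simple loop of area $1$, its string-trajectory / non-crossing-partition expansion terminates: there is essentially a unique non-vanishing trajectory (the plaquette filled once), contributing weight exactly $\beta^{1}$, and the ``finitely many terms'' phenomenon flagged in the introduction (loop expectations are polynomials in $\beta$) collapses here to a single monomial. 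Equivalently, one can match this against Theorem~\ref{t:simple} (the general simple-loop computation, giving $\beta^{{\rm area}(\ell)}$) specialized to ${\rm area}(p)=1$; but since the present theorem is stated first, I would give the self-contained computation: exhibit the unique surviving trajectory, verify its weight, and check no others survive.

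The main obstacle I anticipate is the bookkeeping in the higher-order vanishing: a priori, bringing down $k$ copies of traces of plaquettes could produce, via non-planar or recombining pairings of the Haar matrices, terms of order $N$ that survive the normalization. The work is to verify that all such terms either vanish by the oddness/invariance of Haar measure (odd number of copies of a given $Q_e$ integrates to zero, and matched copies force a rigid ``folding'' of the trajectory) or are subleading in $N$, so that the only surviving contribution at every order is the one already counted at order $\beta^1$. I would organize this either through Chatterjee's explicit weight formula for string trajectories (showing the only non-vanishing trajectory with a single plaquette as its boundary is the obvious one, of weight $\beta$) or through the free-cumulant identity, whichever gives the cleaner termination argument; the decorated-tree language should make the ``unique trajectory'' claim transparent. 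Either way, once higher-order vanishing is established, combining $a_0=0$, $a_1=1$, $a_k=0\ (k\ge2)$ yields $w(p,\beta)=\beta$.
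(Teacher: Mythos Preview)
Your proposal has a real gap at the crucial step, and your suggested tools cannot close it.

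First, the free-probability route you sketch does not apply here. Conjugation invariance alone does not give asymptotic freeness; one needs conjugation invariance \emph{plus independence} (Theorem~\ref{free}). Under the Gibbs measure the edge matrices are not independent, so ``the edge matrices are asymptotically free'' is unjustified. What the paper shows is that, after gauge fixing, the \emph{plaquette} variables $\tilde Q_p$ are independent and orthogonally invariant, hence asymptotically free. But freeness is a statement about joint distributions given the marginals; it is useless for computing the moments of a \emph{single} plaquette variable, which is precisely the content of Theorem~\ref{t:plaquette}. (Indeed, in the paper the free-probability machinery of Sections~\ref{fa}--\ref{s:loopfree} is built \emph{on top of} Theorem~\ref{pla100}; in particular Lemma~\ref{poly100} and the proof of Theorem~\ref{t:simple} both use the plaquette computation as input, so invoking Theorem~\ref{t:simple} here is circular, as you yourself suspect.)

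Second, your fallback---``exhibit the unique surviving trajectory and check no others survive''---is exactly the substantive content, and you have not supplied an argument. A priori, starting from $p$ one can positively deform into larger loops (e.g.\ $p\oplus p$, $p\oplus p^*$, etc.), and these generate many trajectories whose total weight at order $\beta^k$ could be nonzero. Showing that all such contributions cancel for every $k\ge 2$ is the work. The paper does this \emph{combinatorially} via Chatterjee's fundamental recursion \eqref{rec10}, always rooted at the topmost horizontal edge: it parametrizes every loop that can arise by a decorated tree in $\mathfrak{T}$ (Lemma~\ref{closed}), proves a sign-insensitivity for the leading coefficient (Lemma~\ref{lem2}), identifies the trees with nonzero leading coefficient as exactly the paths (Lemma~\ref{aux1}), and then uses a parity/first-increase argument on trajectories to kill all higher coefficients (Lemma~\ref{aux2}). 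None of this is replaced by your Weingarten/Haar expansion sketch, where you correctly flag the obstacle (``non-planar or recombining pairings \ldots\ could produce terms of order $N$'') but do not resolve it; your parity remark (``odd number of copies of $Q_e$ integrates to zero'') does not suffice, since bringing down even numbers of plaquette traces produces even edge multiplicities, and one must control the resulting combinatorics together with the partition-function normalization.
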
  

We can also obtain an explicit expression for the limit for all simple loops. 

\begin{thm}
\label{t:simple}
In the setting of Theorem \ref{t:plaquette}, for any simple loop $\ell$ of area $k$ we have,
$$w(\ell, \beta)=\beta^{k}.$$
\end{thm}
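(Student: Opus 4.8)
The plan is to use Chatterjee's gauge--string duality together with the asymptotic-freeness input of the paper to express the Wilson loop of a simple loop as a product of essentially independent single-plaquette matrices, and then invoke Theorem~\ref{t:plaquette}. Let $\ell$ be a simple loop whose interior consists of the $k$ plaquettes $p_1,\dots,p_k$, so $\mathrm{area}(\ell)=k$. Since the bounded region is simply connected, the identity ``$\ell$ is the boundary of the $2$-chain $p_1+\cdots+p_k$'' can be realized at the level of matrices: peeling off one interior plaquette at a time shows that, in a suitable gauge, $Q_\ell=Q_{p_1}^{\epsilon_1}Q_{p_2}^{\epsilon_2}\cdots Q_{p_k}^{\epsilon_k}$ for some ordering of the $p_i$ and signs $\epsilon_i\in\{\pm1\}$, hence $W_\ell={\rm Tr}\bigl(Q_{p_1}^{\epsilon_1}\cdots Q_{p_k}^{\epsilon_k}\bigr)$.

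\emph{Decoupling.} Next I would apply the ``Gross--Witten trick'' behind Lemma~\ref{varchange}: a change of variables on $\mu_{\Lambda_N,N,\beta}$ after which the plaquette matrices $Q_{p_1},\dots,Q_{p_k}$ are independent, each with the single-plaquette law $\propto e^{N\beta\,{\rm Tr}(\cdot)}\,d\sigma_N$ on $SO(N)$. This law is invariant under conjugation by $O(N)$ --- the conjugation invariance of \eqref{e:gibbs} --- so each $Q_{p_i}$ equals in distribution $O_iB_iO_i^{-1}$ with $O_i$ independent Haar on $O(N)$, independent of the ``reduced'' factors $B_i$. Consequently, by Voiculescu's asymptotic freeness, the family $(Q_{p_i})_{i=1}^k$ is asymptotically free as $N\to\infty$, with common limiting $*$-distribution $m$ in some $(\mathcal{A},\tau)$ satisfying
\[
\tau(m)=\lim_{N\to\infty}\tfrac1N\,\E\,{\rm Tr}(Q_{p_i})=w(p,\beta)=\beta
\]
by Theorem~\ref{t:plaquette}; moreover $m\overset{d}{=}m^{-1}$, so the signs $\epsilon_i$ are irrelevant.

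\emph{Conclusion.} For freely independent $m_1,\dots,m_k$ one has the elementary identity $\tau(m_1m_2\cdots m_k)=\prod_{i=1}^k\tau(m_i)$ (write $m_i=\tau(m_i)+m_i^{\circ}$ and note that any summand retaining at least one centered factor is a $\tau$ of an alternating product of centered elements from distinct free subalgebras, hence vanishes). Combining this with the previous step and Theorem~\ref{t:looplimit},
\[
w(\ell,\beta)=\tau(m_1\cdots m_k)=\prod_{i=1}^k\tau(m_i)=\beta^k,
\]
independently of the ordering of the $p_i$. As a consistency check, the area-law upper bound of Theorem~\ref{t:areaub} (applicable since a simple loop is non-canceling) already forces $a_j(\ell)=0$ for all $j<k$ in the expansion \eqref{e:series}. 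A more string-theoretic derivation of the same identity would be an induction on $k$: the base cases $k=0$ (null loop, $w=1$) and $k=1$ (plaquette, Theorem~\ref{t:plaquette}) are immediate, and for $k\ge2$ one applies the master (Makeenko--Migdal) loop equation at an edge shared by $\partial\ell$ and a corner plaquette $p$ of the interior; the deformation term yields $\beta\,w(\ell',\beta)$ for the simple loop $\ell'$ of area $k-1$ obtained by pushing the boundary across $p$ (so that $w(\ell,\beta)=\beta\cdot\beta^{k-1}=\beta^k$), while the splitting/merging/twisting terms, estimated via the factorization property (Theorem~\ref{t:factor}) and the area bound, contribute only $O(\beta^{k+1})$.

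\emph{Main obstacle.} The crux is the decoupling step. Honest gauge-fixing does not literally make the interior plaquette variables independent with the single-plaquette law --- the relevant edge variables are shared with plaquettes outside $\ell$ --- so one must show, uniformly in $N$, that the asymptotic freeness of $(Q_{p_i})$ and the value $\tau(m)=\beta$ survive the passage to the full model; this is exactly where the invariance of the Hamiltonian under $O(N)$-conjugation is used, via the observation that conjugating the edge variables by independent Haar matrices leaves the joint law of the plaquette configuration unchanged, together with convergence of the single-matrix integral $\propto e^{N\beta\,{\rm Tr}(\cdot)}\,d\sigma_N$. Equivalently, in Chatterjee's trajectory expansion of $w(\ell,\beta)$ the obstacle is to prove that the signed contributions of all string trajectories using strictly more than $k$ deformation moves cancel, so that only the area-minimizing trajectories --- organized in the paper via decorated trees and non-crossing partitions --- survive and sum to $\beta^k$; this rearrangement is licensed by the absolute convergence of the $\beta$-series in the strong-coupling regime.
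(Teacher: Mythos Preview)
Your overall strategy --- axial gauge-fixing to make the plaquette variables $\tilde Q_p$ independent with law $\propto e^{N\beta\,{\rm Tr}(\cdot)}\,d\sigma_N$, then invoking Voiculescu's asymptotic freeness and the elementary identity $\tau(m_1\cdots m_k)=\prod\tau(m_i)$ for mutually free $m_i$ --- is exactly the route the paper takes. The gap is in your first step: the assertion that ``in a suitable gauge, $Q_\ell=Q_{p_1}^{\epsilon_1}\cdots Q_{p_k}^{\epsilon_k}$'' with each \emph{interior} plaquette appearing exactly once is not true in general. Under the axial gauge (the only gauge in which you actually know the $\tilde Q_p$ are independent), the word for $\tilde Q_\ell$ typically contains \emph{exterior} plaquettes as well, with net exponent zero. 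For instance, take the simple loop bounding an inverted U (columns $0,1,2$, heights $0$--$2$, with the square $p_{1,1}$ removed); a direct computation from \eqref{inversion1} gives
\[
\tilde Q_\ell=\tilde Q_{p_{0,1}}\tilde Q_{p_{0,2}}\,\tilde Q_{p_{1,1}}\,\tilde Q_{p_{1,2}}\tilde Q_{p_{2,1}}\tilde Q_{p_{2,2}}\,\tilde Q_{p_{1,1}}^{-1},
\]
a word of length $7$, not $5$, with the exterior plaquette $p_{1,1}$ appearing twice. Your factorization $\tau(m_1\cdots m_k)=\prod\tau(m_i)$ therefore does not apply, since two of the letters lie in the same free subalgebra. (A further cumulant computation does give $\beta^5$ here, but that is precisely the work you have not done.) The ``peeling'' you invoke produces matrices $Q_{p_i}$ that are products of edge variables starting from different basepoints; these are conjugates of one another and are neither independent nor the $\tilde Q_p$ of Lemma~\ref{l:pfree}.

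The paper closes this gap by proceeding inductively rather than all at once: it peels off a single corner plaquette $p_0$, writing $\tilde Q_\ell=\tilde Q_{p_0}\tilde Q_{\ell'}$ after translating so that $p_0$ sits at the origin. In Case~1, $\ell'$ is again simple and one applies freeness of $p_0$ from the remaining plaquettes together with the inductive hypothesis. In Case~2, however, $\ell'$ is a figure-eight $\hat\ell\,\tilde\ell$ of two simple loops sharing only a vertex, and here plaquette-freeness alone does \emph{not} give $\phi(q_{\hat\ell}q_{\tilde\ell})=\phi(q_{\hat\ell})\phi(q_{\tilde\ell})$, because the axial-gauge words for $\hat\ell$ and $\tilde\ell$ may share exterior plaquettes. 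The paper handles this with a separate result (Proposition~\ref{free120}: disjoint simple loops are asymptotically free), proved via a second, loop-dependent gauge fixing along a path separating $\hat\ell$ from $\tilde\ell$. This is the missing idea in your proposal. Your alternative master-loop-equation sketch has the same issue: after negative deformation at a corner you land on $\ell'$, which in Case~2 is not simple, and you need exactly this factorization to continue the induction; moreover ``the other terms are $O(\beta^{k+1})$'' is not enough to pin down $w(\ell,\beta)$ exactly.
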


Although Theorem \ref{t:plaquette} is a special case of Theorem \ref{t:simple}, we have chosen to state it separately, as we prove Theorem \ref{t:plaquette} first using combinatorial arguments, and then prove Theorem \ref{t:simple} using a factorization that comes from fixing a suitable gauge and asymptotic freeness of Haar distributed matrices in $SO(N)$.

An immediate corollary of Theorem \ref{t:simple} is an area law lower bound for simple loops in 't Hooft limit for the strong coupling regime.

\begin{cor}
\label{c:arealb}
In the setting of Theorem \ref{t:looplimit}, for $\beta$ sufficiently small there exists $C_1, C_2$ depending on $\beta$ such that for all simple loops $\ell$ we have,
$$\lim_{N\to \infty}\frac{|\langle W_{\ell} \rangle|}{N}\geq C_1e^{-C_2 {\rm area}(\ell)}.$$
\end{cor}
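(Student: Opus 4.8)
The plan is to deduce the bound directly from Theorem~\ref{t:simple} together with the convergence established in Theorem~\ref{t:looplimit}, so essentially no new work is needed beyond a change of variables. Fix $\beta$ with $0<|\beta|<\beta_0$, shrinking $\beta_0$ beforehand if necessary so that in addition $\beta_0<1$. Let $\ell$ be a simple loop and set $k={\rm area}(\ell)$. By Theorem~\ref{t:looplimit} the limit $\lim_{N\to\infty}\langle W_\ell\rangle_{\Lambda_N,N,\beta}/N$ exists and equals $w(\ell,\beta)$; since $|\cdot|$ is continuous, $\lim_{N\to\infty}|\langle W_\ell\rangle_{\Lambda_N,N,\beta}|/N$ also exists and equals $|w(\ell,\beta)|$. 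By Theorem~\ref{t:simple} we have $w(\ell,\beta)=\beta^{k}$, hence
$$\lim_{N\to\infty}\frac{|\langle W_\ell\rangle_{\Lambda_N,N,\beta}|}{N}=|\beta|^{k}.$$

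Next I would rewrite $|\beta|^{k}$ in the exponential form demanded by the statement. Since $0<|\beta|<1$, we have $|\beta|^{k}=\exp\bigl(-k\log(1/|\beta|)\bigr)$, so putting $C_1:=1$ and $C_2:=\log(1/|\beta|)>0$ gives
$$\lim_{N\to\infty}\frac{|\langle W_\ell\rangle_{\Lambda_N,N,\beta}|}{N}=C_1 e^{-C_2\,{\rm area}(\ell)}\geq C_1 e^{-C_2\,{\rm area}(\ell)},$$
which is the desired inequality. Both $C_1$ and $C_2$ depend only on $\beta$, not on $\ell$, as required. The case $\beta=0$ is excluded (there $w(\ell,\beta)=0$ for every non-null loop, so no positive lower bound is possible); and one should note that a simple non-null loop has $k\geq 1$, while the null loop $\emptyset$ satisfies $|\langle W_\emptyset\rangle|/N=1$ and hence the bound with the same constants. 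Here ${\rm area}(\ell)$ is understood in the $1$-chain/$2$-chain sense of Section~\ref{s:area}, which for a simple loop coincides with the number of plaquettes it encloses, as recorded in the examples following the definition of area.

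There is essentially no obstacle: all the content lies in Theorem~\ref{t:simple}, and the corollary is merely its restatement after taking logarithms and identifying the exponential rate $C_2$ with $\log(1/|\beta|)$. The only minor points that require care are ensuring $|\beta|<1$ so that $C_2$ is genuinely positive, and recording the conventions about the area of simple loops and the trivial null-loop case.
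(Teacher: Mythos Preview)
Your proof is correct and follows exactly the approach in the paper: deduce the bound directly from Theorem~\ref{t:simple} by taking $C_1=1$ and $C_2=\log|\beta|^{-1}$. The paper states this in one line; your additional remarks about ensuring $|\beta|<1$, excluding $\beta=0$, and the null-loop case are all sensible bookkeeping but not essential to the argument.
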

Notice that this corollary is proved immediately by using Theorem \ref{t:simple} by taking $C_1=1$ and $C_2=\log |\beta|^{-1}$. As mentioned before the result generalizes the result in \cite{Sei} which proved an area law lower bound for rectangular loops (although in a slightly different setting). The next result shows however,  that the area law lower bound does not hold for all loops in dimension 2.  The proof technique gives us a way to geometrically understand string trajectories in any dimension via the theory of non-crossing partitions, even though the connection to free probability is lost.


\begin{ppn}
\label{p:lbcounterex}In the strong coupling regime (with $\beta>0$) in any dimension, there is an absolute constant $C$  depending only on the dimension such that for every $K,k>0$ there exist loops $\ell$ with area at most $k$, such that,
$$\lim_{N\to \infty}\frac{|\langle W_{\ell} \rangle|}{N}\leq (C\beta )^{K}.$$ 
\end{ppn}  
It is clear by taking a sequence of loops as given by Proposition \ref{p:lbcounterex} with $k$ fixed and $K$ increasing to infinity, that area law lower bound must fail for a sequence of such loops.  We point out that this does not rule out an area law lower bound holding according to a different definition of area than what is being used in this paper. We elaborate on this more in Section \ref{s:oq}.

Finally, our last main result shows that the power series expression for $w(\ell,\beta)$ in \eqref{e:series} must terminate, that is, $w(\ell,\beta)$ must be a polynomial. 

\begin{thm}
\label{t:poly}
In the strong coupling regime of Theorem \ref{t:looplimit}, for any loop $\ell$ there exists some $k_0=k_0(\ell)$ such that $a_{k}(\ell)=0$ for all $k>k_0$ where $a_k(\ell)$ is as in \eqref{e:series}. That is, we have, 
$$w(\ell,\beta)=\sum_{k=0}^{k_0} a_k(\ell)\beta^{k}.$$
\end{thm}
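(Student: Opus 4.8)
The plan is to fix the loop $\ell$ once and for all and exhibit $w(\ell,\beta)$ as a \emph{fixed} polynomial in finitely many limiting moments of a single plaquette matrix, each of which is in turn a polynomial in $\beta$ throughout the strong coupling interval; since a polynomial in polynomials is a polynomial, this gives Theorem~\ref{t:poly}, together with an explicit bound on $k_0(\ell)$. The first step is to pass to a convenient exhaustion: by Theorem~\ref{t:looplimit} the limit $w(\ell,\beta)=\lim_N \langle W_\ell\rangle_{\Lambda_N,N,\beta}/N$ is independent of the sequence $\Lambda_N\uparrow\Z^2$, so one may take each $\Lambda_N$ to be a large rectangle containing $\ell$ together with some $2$-chain whose boundary is $\ell$. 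On such a rectangle, fixing a maximal-tree (``axial'') gauge as in Section~\ref{fa} makes the edge variables on the tree equal to the identity, leaves the plaquette matrices $U_f$, $f\in\mathcal P^+_{\Lambda_N}$, mutually independent with common law $\nu_{N,\beta}\propto\exp\!\bigl(N\beta\,{\rm Tr}(\cdot)\bigr)\,d\sigma_N$ on $SO(N)$, and turns $W_\ell$ into the trace of an explicit noncommutative word in finitely many of the $U_f$ and their transposes, the number of letters and of faces involved being bounded in terms of $\ell$ alone.

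Next, since $\nu_{N,\beta}$ is invariant under conjugation by $O(N)$ and the $U_f$ are independent, the family of plaquette matrices touched by the word is asymptotically $*$-free as $N\to\infty$ (this is exactly the asymptotic freeness input developed in Section~\ref{fa}). Hence $w(\ell,\beta)=\tau\bigl(\text{word}(u_1,\dots,u_r)\bigr)$, where $\tau$ is the trace on the free product and $u_1,\dots,u_r$ are free copies of the limiting $*$-distribution $\nu_\beta=\lim_N\nu_{N,\beta}$. The one-plaquette input is then supplied by the Gross--Witten-type analysis behind Theorem~\ref{t:plaquette} (cf.\ Lemma~\ref{varchange}): in the strong coupling regime the limiting moments $\tau(u^m)$, $m\in\Z$, are polynomials in $\beta$ (equivalently, the free cumulants of $\nu_\beta$ relevant to a word of bounded length are polynomials in $\beta$, and only finitely many of them are nonzero).

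To conclude, expand $\tau\bigl(\text{word}(u_1,\dots,u_r)\bigr)$ via the moment--cumulant formula of free probability as a \emph{finite} sum over non-crossing partitions $\pi$ of the letters of the word, each block $B\in\pi$ contributing a free cumulant of some $u_i$ and contributing $0$ unless all letters of $B$ are powers of a single $u_i$. Every factor is, by the previous step, a polynomial in $\beta$, and the sum is finite, so $w(\ell,\beta)$ is a polynomial in $\beta$. Tracking degrees through the two steps produces an explicit $k_0(\ell)$ (of order the length of the reduced word, hence of order the length of $\ell$), and comparison with \eqref{e:series} gives $a_k(\ell)=0$ for $k>k_0(\ell)$. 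This is also consistent with Theorem~\ref{t:simple}: for a simple loop the word has each face appearing once, so by freeness $\tau$ factorizes as $\tau(u_1)\cdots\tau(u_k)=\beta^k$.

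The main obstacle is the one-plaquette statement: upgrading the Gross--Witten heuristic for the $SO(N)$ integral $\exp\!\bigl(N\beta\,{\rm Tr}(\cdot)\bigr)$ to the rigorous assertion that the limiting moments $\tau(u^m)$ are \emph{honest} polynomials in $\beta$ on all of $(-\beta_0,\beta_0)$ --- not merely that their convergent Taylor series truncate --- and checking that $\beta_0$ may be taken inside the strong-coupling phase where this truncation holds; once Lemma~\ref{varchange} delivers this, the rest is the bookkeeping of the non-crossing-partition sum and the degree count. A secondary point, handled by the $O(N)$-invariance of the Hamiltonian, is making the gauge-fixing together with the independence of the plaquette matrices and their asymptotic $*$-freeness fully rigorous in the thermodynamic and large-$N$ limits.
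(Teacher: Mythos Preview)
Your proposal is correct and is essentially the same argument the paper gives: gauge-fix so that the plaquette variables are independent with the single-plaquette Gibbs law (Lemma~\ref{l:pfree}), use $O(N)$-conjugation invariance (Lemma~\ref{varchange}) together with Theorem~\ref{free} to get asymptotic freeness (Proposition~\ref{con1}), write $w(\ell,\beta)$ as a finite non-crossing-partition sum of products of single-plaquette cumulants (Proposition~\ref{con2}), and conclude because those cumulants are polynomials in $\beta$ (Lemma~\ref{poly100}).

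Two small clarifications about your ``main obstacle''. First, what you actually need is not just Theorem~\ref{t:plaquette} ($\tau(u)=\beta$) but the full Theorem~\ref{pla100}: $\tau(u^m)$ equals $\beta$ for $m=\pm1$ and $0$ for $|m|\ge 2$. The paper does \emph{not} obtain this from any analytic Gross--Witten computation; Lemma~\ref{varchange} only records the conjugation invariance used for freeness. Theorem~\ref{pla100} is proved combinatorially in Section~\ref{pla} by iterating Chatterjee's fundamental recursion \eqref{rec10} rooted at the topmost edge and tracking the resulting loops via the decorated-tree encoding (Lemmas~\ref{lem2}, \ref{aux1}, \ref{aux2}). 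Second, your worry about ``honest polynomials versus truncating Taylor series'' is not an issue: \eqref{e:series} is absolutely convergent on $|\beta|<\beta_0$, so showing $a_k(\ell)=0$ for $k>k_0$ is literally the same as $w(\ell,\beta)$ being a polynomial there.
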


\section{Overview}
In this section, we give a broad overview of our techniques which combine a variety of combinatorial and analytical tools. Our starting point is a fundamental recursion of Chatterjee for the coefficients $a_{k}$ in \eqref{e:series}; see Section \ref{rec1}. The recursion, coming from a lattice string theory developed by Chatterjee, expresses the $k$-th coefficient $a_{k}(\ell)$ of the power series of a loop $\ell$, in terms of the $k$-th and smaller coefficients of certain functions of the loop $\ell$, called ``splitting" and ``deformation"; see the next section for formal definitions of these. Using this recursion and some combinatorial analysis, we are able to establish Theorem \ref{t:plaquette}; that is to show that the power series of a single plaquette is $\beta$. It should be possible to take this analysis further and prove Theorem \ref{t:simple} by this argument, but an observation already present in the physics literature \cite{Wadia, GW80}, simplifies that task. We describe the notion of axial gauge fixing formally later in the article (see Section \ref{gf}), but informally this refers to fixing the values of matrices corresponding to certain edges, without changing the law of the statistics of interest. By gauge fixing, one can associate independent orthogonal matrices to each of the plaquettes with a certain distribution, and one can then exploit the asymptotic freeness (see below for the relevant definitions) of those matrices to show that the loop expectations factorize in a certain sense, in the 't Hooft limit. Using the standard moment cumulant formulae from free probability theory, one can then obtain an expression of the limiting loop expectation $w(\ell, \beta)$ in terms of some standard combinatorial objects.

 
\subsection{Elements of Chatterjee's Lattice String Theory}\label{cst}
In \cite{Cha15}, Chatterjee developed a lattice string theory by defining certain operations on loops: ``splitting", ``merger", ``deformation" and ``twisting", which are analogues of standard operations of string theory in the continuum setting. These are operations on loop(s), which produce one or more different loops. We shall not recall all the details of the formal definitions that go into this construction, and only recall the bare essentials needed for our purpose (the interested reader is referred to Section 2.2 of \cite{Cha15} for more details). We start with the definitions of some operations on loops. In what follows, $[\ell]$ will define the \emph{backtrack erasure} of the closed walk $\ell$, that is the loop obtained from $\ell$, by sequentially deleting consecutive pairs of edges, that are reverses of one another\footnote{It is not hard to check that $[\ell]$ is well defined, see \cite[Lemma 2.1]{Cha15}.}.

\subsubsection{Negative Deformation}
Define a negative deformation of a loop $\ell=aeb$ at the edge $e$ with the plaquette $p=ced$ by $\ell \ominus p= [ac^{-1}d^{-1}b]$. Notice that here the edge $e$ occurs with the same orientation in $\ell$ and $p$, if they occur with different orientations, we can still define negative deformation as follows. Define a negative deformation of a loop $\ell=aeb$ at the edge $e$ with the plaquette $p=ce^{-1}d,$ by $\ell \ominus p=[adcb]$. Observe that, negative deformation of $\ell$ with the plaquette $p$ or $p^{-1},$ gives the same loop. 
\begin{figure}[h]
\centering
\includegraphics[width=.7\textwidth]{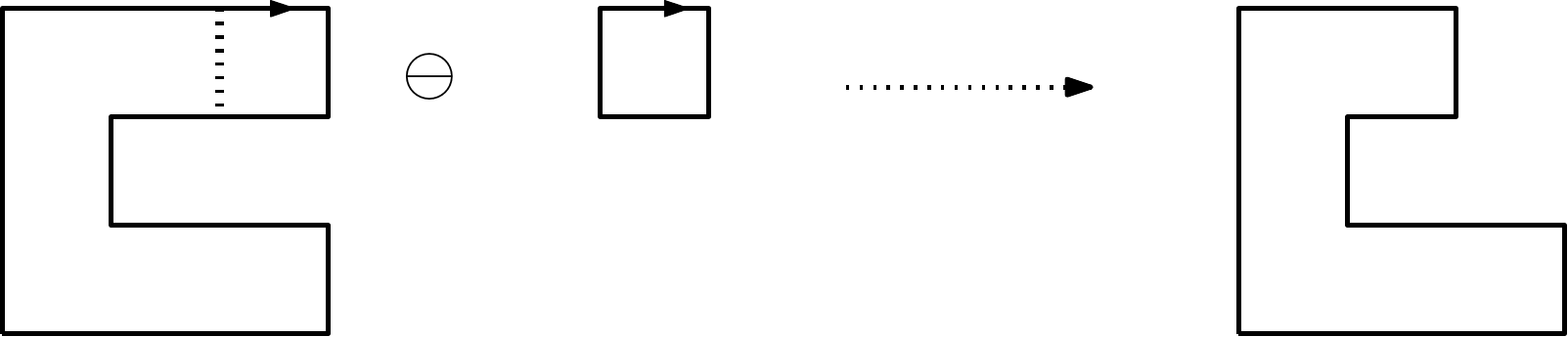}
\caption{Negative deformation}
\label{f:nd}
\end{figure}

\subsubsection{Positive Deformation}
Define a positive deformation of a loop $\ell=aeb$ at the edge $e$ with the plaquette $p=ced,$ by $\ell \oplus p= [aedceb]$. Positive deformation with different orientation, can be defined as before by taking $\ell=aeb$ and $p=ce^{-1}d,$ and defining $\ell\oplus p= [aec^{-1}d^{-1}eb]$.
Note that negative deformation along the edge $e,$ deletes the edge $e$, whereas for positive deformation along the edge $e$, the edge occurs with one extra multiplicity in the resulting loop.

Also, observe that the edges $e$ can occur at different locations along the loop $\ell$, so one needs to specify the locations $x$ along the loop $\ell$ (along with the edge $e$) to define the deformation operation. We shall denote those operations by $\ell \oplus _{x} p$ and $\ell\ominus _x p$ respectively. 
\begin{figure}[h]
\centering
\includegraphics[width=.7\textwidth]{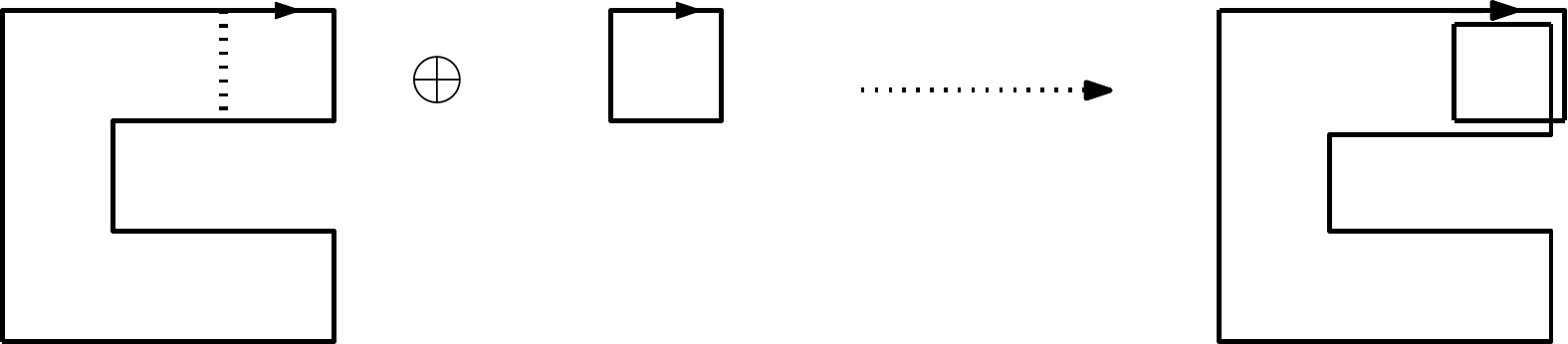}
\caption{Positive deformation}
\label{f:pd}
\end{figure}

\subsubsection{Splitting}
Splitting, as the name suggests, splits a single loop into two loops. There are two types of splitting, \textbf{positive splitting} and \textbf{negative splitting}. Positive splitting can occur when an edge is repeated twice in a loop with the same orientation. The positive splitting of the loop $aebec$ at the edge $e$ at the locations pointed to above (i.e.\ between $a$ and $b$ and between $b$ and $c$) is given by the pair of loops $[aec]$ and $[be]$. 

Negative splitting can occur when for an edge $e,$ both $e$ and $e^{-1}$ are present in the loop. Negatively splitting $aebe^{-1}c$ at the edges $e$ and $e^{-1}$ (at the specified locations as above) results into the pair of loops $[ac]$ and $[b]$. 

Notice that as with deformations, in case of splitting too, positive operations keep the edge, whereas negative operations delete it.
\begin{figure}[h]
\centering
\includegraphics[width=.7\textwidth]{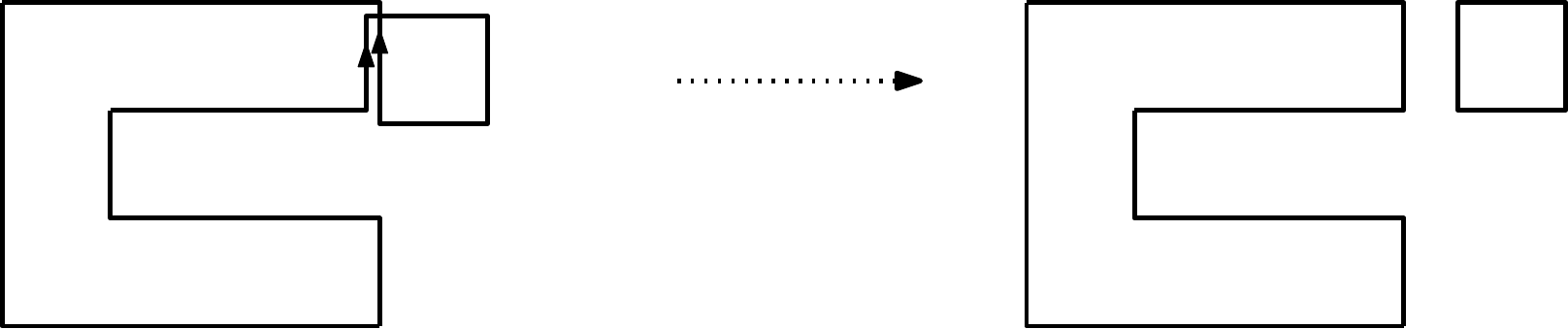}
\caption{Positive splitting}
\label{f:ps}
\end{figure}

Also observe that the edges $e$ and $e^{-1}$ can occur at different locations along the loop $\ell$, and hence  one needs to specify the locations $x$ and $y$ along the loop $\ell$ to define the splitting operation. We shall denote by $(\times_{x,y}^{1} \ell, \times_{x,y}^{2} \ell)$ the pair of loops created by splitting $\ell$ at locations $x$ and $y$ provided the operation is well-defined, i.e., the edges at the locations $x$ and $y$ are the same or are reversal of one another.  

\begin{figure}[h]
\centering
\includegraphics[width=.7\textwidth]{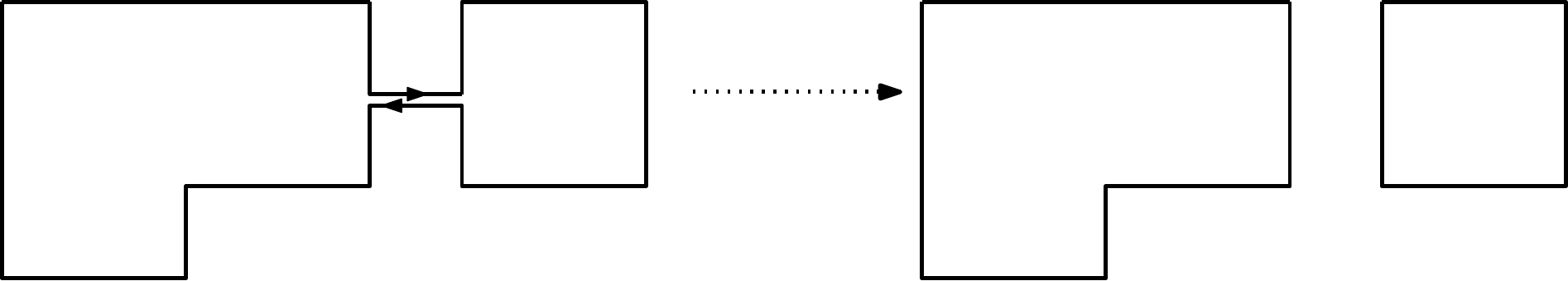}
\caption{Negative splitting}
\label{f:ns}
\end{figure}

\subsection{Master Loop Equation and the Fundamental Recursion}
\label{rec1}
A major tool used in Chatterjee's proof was the ``finite $N$ master loop equation" (see \cite[Theorem 3.6]{Cha15}) which is a set of recursive set of equations for the Wilson loop expectations. This type of equations has appeared in non-rigorous physics literature starting with the work of Makeenko and Migdal \cite{MM}, and similar results in the large $N$ limit for certain matrix models, have been obtained in \cite{CGM09} using the language of non-commutative derivatives. A major highlight of Chatterjee work is the application of  Stein's exchangeable pair to obtain the equations for finite $N$ whereas all previous results were obtained in the large $N$ limit. 

However, instead of the symmetric version of this recursion given in \cite[Theorem 3.6]{Cha15}, the un-symmetrized version of the recursion where one  focusses on all loop operations on a single edge $e$,  will be most useful for us. Let $\ell$ be a loop. Let $e$ be a fixed edge in $\ell$ that occurs with multiplicity $m$ (both $e$ and $e^{-1}$ together). Let $A$ (resp.\ $B$) be the set of locations in $\ell$ where $e$ (resp.\ $e^{-1}$) occurs. Let $C=A\cup B$. Let $\mathcal{P}^{+}(e)$ denote the set of all positively oriented plaquettes containing the edge $e$. Consider the recursive formula
\begin{eqnarray}\label{rec10}
a_{k}(\ell) &=& \frac{2}{m}\sum_{x\in A, y\in B} a_{k}(\times _{x,y}^{1} \ell, \times _{x,y}^{2} \ell) ~(\text{negative splitting})\\
\nonumber
&~& -\frac{1}{m} \sum_{x,y\in A, x\neq y} a_{k}(\times _{x,y}^{1} \ell, \times _{x,y}^{2} \ell) ~(\text{positive splitting with $e$})\\ 
\nonumber
&~& -\frac{1}{m} \sum_{x,y\in B, x\neq y} a_{k}(\times _{x,y}^{1} \ell, \times _{x,y}^{2} \ell) ~(\text{positive splitting with $e^{-1}$})\\
\nonumber
&~&+ \frac{1}{m}\sum_{x\in C}\sum_{p\in \mathcal{P}^{+}(e)} a_{k-1}(\ell\ominus _x p)  ~(\text{negative deformation with plaquette $p$ containing $e$})\\
\nonumber
&~&-\frac{1}{m}\sum_{x\in C}\sum_{p\in \mathcal{P}^{+}(e)} a_{k-1}(\ell\oplus _x p) ~(\text{positive deformation with plaquette $p$ containing $e$}).
\end{eqnarray}
\begin{remark}\label{prod}
The splitting terms above in the RHS are defined as follows. Recall that it follows from Theorem \ref{t:factor} that for a loops sequence $(\ell_1,\ell_2, \ldots, \ell_n)$ for limit of the loop expectation $\langle W_{\ell_1}W_{\ell_2}\cdots W_{\ell_n}\rangle $, scaled by $N^n$ also has a power series expansion as in \eqref{e:series}, denote the $k$-th coefficient is denoted by $a_{k}(\ell_1,\ell_2, \ldots, \ell_n)$. 
\end{remark}

For a loop sequence $(\ell_1, \ell_2, \ldots, \ell_n)$ and an edge $e$ one can define a similar recursive equation where the  the expansion about the edge $e$ is done for the first loop $\ell_1$ only. We shall call these recursions \textbf{the fundamental recursion}, and indeed it will be the fundamental tool used in our analysis.

Chatterjee proves that (see \cite[Proposition 4.1]{Cha15}) that the coefficients $a_{k}$ in \eqref{e:series} (as well as the multiple loop version of it) satisfy the fundamental recursion above, and the coefficients $a_{k}$ can be recursively computed using the above formula along with the initial condition that the power series for a null loop sequence is given by $1$ (i.e., $a_0=1$ and $a_{k}=0$ for all larger $k$). Note that it is not a priori clear that the recursion terminates as by doing positive or negative deformation, a loop can be made into a larger loop. However, as shown by Chatterjee, not only does the recursion terminate, but the solution is unique in the strongly coupled regime. The proof of the unicity as based on a contraction argument as in \cite{CGM09}, which in particular does not preclude the fact that even if the solution leads to a power series that has a larger radius of convergence, one can only show that the limiting value of the loop expectations is given by the power series only in the strong coupling regime on which Theorem \ref{t:looplimit} remains valid. This will indeed be one situation we shall encounter.

\subsection{Free probability and combinatorics}
The notion of `freeness' was introduced by Voiculescu around 1985 in connection with some old questions in the theory of operator algebras.  Furthermore, he advocated the point of view that freeness behaves in some respects like an analogue of the classical probabilistic concept of `independence' - but an analogue for non-commutative random variables, in particular for certain algebras generated by random matrices.  It turns out to be the right notion to analyze limiting loop expectations in the setting of planar lattice gauge theory. The key fact about the gauge theory Hamiltonian is a certain invariance property under conjugation by elements of $O(N)$ that allows us to  decouple the Gibbs measure and show asymptotic freeness. This is done in details in Section \ref{fa}.

Our work begins with the observation that one can make judicious choices of edges in using the fundamental recursion so that the solution becomes combinatorially tractable. Indeed our proof of Theorem \ref{t:plaquette} solely relies on this combinatorics without using any analytical machinery or free probability techniques. It might be possible to write down a proof of Theorem \ref{t:simple} in the language of this combinatorics as well, however using the connection to freeness provides insights on connections with other well known combinatorial objects such as non-crossing partitions. In particular by way of proving Theorem \ref{t:simple} we also prove that disjoint loops are asymptotically free, a fact that might be of independent interest; see Proposition \ref{free120} for a precise statement. Unfortunately, as one might expect, the connection to free probability is lost in higher dimensions and the combinatorics becomes more complicated. However, fortunately, a correspondence between string trajectories and non-crossing partitions still continues to persist in any dimension which we exploit to show analyze loop expectations.

We also wish to emphasize that other approaches towards understanding  other lattice gauge theories in two dimensions (\cite{GW80, Wadia, LDPunitary})  are extremely reliant on the planar nature of the problem. The only promising approach in high dimensions seems to be through understanding  geometrical properties of random surfaces formed by the string trajectories akin to the decorated trees we encounter in the planar case (see Figure \ref{f:tree}), through analyzing the fundamental recursions.

\subsection{Organization of the article}
The rest of this article is organised as follows. In Section \ref{pla} we study in detail the limiting statistics for a single plaquette and prove Theorem \ref{t:plaquette}. In Section \ref{gf} we take advantage of the planarity of the setting which forces a lot of decoupling in the Gibbs measure. All of this is formalized using what is known in the physics literature as gauge fixing. In Section \ref{fa} we show joint convergence of all the plaquettes to an unital algebra of non-commutative variables and a linear functional. To do this, among other tricks  certain basic results of free probability theory are employed. These are reviewed in Section \ref{rev}. The entire proof of freeness crucially depends on the fact that for a single plaquette this convergence holds. Using this and certain tricks of free probability theory in Section \ref{s:poly} we give a description of loop statistics for the planar lattice gauge theory and prove Theorem \ref{t:poly}. A different gauge fixing allows us to prove that disjoint loops are asymptotically free and conclude the proof of Theorem \ref{t:simple}. This is done in Section \ref{s:loopfree}. In Section \ref{s:comp}, we do some explicit computations for loop statistics for some non-simple loops using free probability techniques, and in particular prove Proposition \ref{p:lbcounterex} for the planar case. In Section \ref{s:hd} we generalize this and discuss examples for which the area law lower bounds do not hold.  We finish with a discussion of some intriguing open questions in Section \ref{s:oq}.

\section{Statistics for a Plaquette}\label{pla}
We shall prove Theorem \ref{t:plaquette} in this section. Recall the Gibbs measure $\mu_{\Lambda_N,N,\beta}$. In what follows $Q=\{Q_{e}\}$ shall be a configuration of matrices drawn from $\mu_{\Lambda_{N}, N, \beta},$ where we shall suppress $N$ from the notations for convenience. As before, for any positively oriented plaquette $p$, we shall denote by $Q_{p},$ the product of the matrices along the edges of $p$; and for any loop $\ell$, we shall denote by $W_{\ell}$ the trace of the matrix obtained by multiplying the $Q$-matrices along the edges of $\ell$. For the purpose of this section $\langle \cdot \rangle$ will denote the expectation with respect to the Gibbs measure $\mu_{\Lambda_{N},N,\beta}$ where the parameters will always be clear from the context. The following theorem characterizes the loop expectations where the loop is either a plaquette, or a plaquette wrapped around multiple times.  

\begin{thm} 
\label{pla100} 
For any $k\in \N$ and any plaquette $p$,
$$\lim_{N\to \infty}\frac{\langle \Tr (Q_p^{k}) \rangle}{N} = \left \{ \begin{array}{cc}
\beta &  k=1\\
0 &  \text{ otherwise.}
 \end{array}
 \right.
 $$
\end{thm}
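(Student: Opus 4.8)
The plan is to reduce, via Theorem~\ref{t:looplimit}, to a statement about $w(\cdot,\beta)$: since $\mathrm{Tr}(Q_p^k)$ is the Wilson variable $W_{p^k}$ of the loop $p^k$ (the plaquette $p$ traversed $k$ times), it suffices to prove $w(p^k,\beta)=\beta$ for $k=1$ and $w(p^k,\beta)=0$ for $k\ge 2$, i.e.\ that the coefficients of \eqref{e:series} satisfy $a_j(p^k)=\mathbf 1[(j,k)=(1,1)]$. I would prove this purely combinatorially, through Chatterjee's fundamental recursion \eqref{rec10} and the fact that it determines the coefficients uniquely in the strong coupling regime; no free probability is needed here.

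The first step is to expand \eqref{rec10} for $\ell=p^k$ about a fixed edge $e$ of $p$, which occurs $k$ times in $p^k$, all with the same orientation. Then there is no negative–splitting term; the positive–splitting terms produce exactly the loop pairs $(p^{r},p^{k-r})$, $1\le r\le k-1$; the deformation terms coming from $p$ itself are clean, $p^k\ominus p=p^{k-1}$ and $p^k\oplus p=p^{k+1}$; and deforming with the \emph{other} plaquette $p^{\sharp}$ that shares $e$ produces only two genuinely new shapes — a $1\times 2$ rectangle $R_2$ carrying a $(k-1)$–fold plaquette tail at a corner, denoted $R_2\#p^{k-1}$, and a wedge $\mathcal W_k$ of $p^k$ and one copy of $p^{\sharp}$ joined at a vertex (so that $R_2\#p^{0}=R_2$ and $\mathcal W_1$ is the area–two ``bowtie'' $W_2'$). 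The recursion then takes the shape
\begin{equation}
\label{e:plaqrec}
a_j(p^k)=-\sum_{r=1}^{k-1}a_j(p^{r},p^{k-r})+a_{j-1}(p^{k-1})-a_{j-1}(p^{k+1})+a_{j-1}(R_2\#p^{k-1})-a_{j-1}(\mathcal W_k),
\end{equation}
and, crucially, iterating \eqref{rec10} at judiciously designated edges never leaves a controlled family $\mathcal F$ of (multi-)loops — morally, loops supported on a union of plaquettes filling a simply described region and equipped with prescribed windings.

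I would then posit explicit values on $\mathcal F$: for a single $\ell\in\mathcal F$ let $\tilde a_j(\ell)$ be the coefficient of $\beta^j$ in $\beta^{\mathrm{area}(\ell)}$ when the minimal $2$–chain with boundary $\ell$ has all its coefficients in $\{-1,0,1\}$, and $\tilde a_j(\ell)=0$ otherwise; on tuples take the convolution dictated by Theorem~\ref{t:factor}. In particular $\tilde a_j(p^k)=\mathbf 1[(j,k)=(1,1)]$, $\tilde a_j(R_2\#p^{m})=\mathbf 1[m=0]\,\mathbf 1[j=2]$, and $\tilde a_j(\mathcal W_k)=\mathbf 1[k=1]\,\mathbf 1[j=2]$. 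The heart of the proof is to check that $\tilde a$ satisfies \eqref{rec10} throughout $\mathcal F$, by a double induction: an outer induction on the order $j$ — every deformation term on the right of \eqref{rec10} has order $j-1$, and the base case $j=0$, where $a_0(\ell)=0$ for all non-null $\ell$, already follows from \eqref{rec10} at $j=0$ together with $a_0(\emptyset)=1$ by the inner size induction — and an inner induction on a size functional that strictly decreases under both kinds of splitting, such as ``(total edge–length) minus (number of component loops)''. For the tower $\{p^k\}$ the verification boils down to a few identities among the auxiliary coefficients, the key one being $\tilde a_j(R_2)=\tilde a_j(\mathcal W_1)$ — both equal $\mathbf 1[j=2]$ because both $R_2$ and $W_2'$ have area $2$ and a minimal chain with $\pm1$ coefficients — which cancels the two $p^{\sharp}$–terms of \eqref{e:plaqrec} for $k=1$ and leaves exactly $a_{j-1}(\emptyset)-a_{j-1}(p^2)$, recovering $\tilde a_j(p)=\mathbf 1[j=1]$. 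Once $\tilde a$ has been matched against \eqref{rec10} on all of $\mathcal F$, the uniqueness of the solution of the fundamental recursion in the strong coupling regime gives $a_j=\tilde a_j$ on $\mathcal F$, and in particular $w(p^k,\beta)=\sum_j a_j(p^k)\beta^j=\beta\,\mathbf 1[k=1]$, which is the theorem.

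The step I expect to be the main obstacle is this combinatorial core — identifying the family $\mathcal F$ precisely, proving it closed under the splitting and deformation operations at the designated edges, and grinding through the type-by-type verification of \eqref{rec10} under the Ansatz. Two features call for special care: the bookkeeping of multiplicities and of the locations $x,y$ in the splitting and deformation sums (so that, e.g., the $k$ ordered occurrence–pairs of cyclic separation $r$ and the $k$ choices of occurrence for each relevant plaquette are weighted correctly), and the fact that positive deformation can strictly enlarge a loop, which is why the induction must be run with $j$ as the primary parameter and why uniqueness — rather than a naive termination of the recursion — is what one invokes at the end.
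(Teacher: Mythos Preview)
Your approach is essentially the paper's own: the closed family $\mathcal F$ is realized there by loops parametrized by decorated trees $\mathfrak T$, with the ``judiciously designated edge'' always taken to be the topmost horizontal edge of the current loop (closure is Lemma~\ref{closed}). Your Ansatz---$\tilde a_j(\ell)=\mathbf 1[j=\mathrm{area}(\ell)]$ precisely when the minimal $2$-chain has $\{-1,0,1\}$ coefficients, and $0$ otherwise---is exactly what Lemmas~\ref{aux1} and~\ref{aux2} establish, since for a tree-loop that condition reads ``$\T$ is a path'' (i.e.\ all level multiplicities $g_i=1$).
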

Observe that Theorem \ref{t:plaquette} is a special case $k=1$ of the above result. Also observe that $W_{\ell}=\Tr(Q_p^{k})$ where $\ell$ is the loop obtained by wrapping the plaquette $p$ around $k$ times; see Figure \ref{f:wrap}.
\begin{figure}[h]
\centering
\includegraphics[width=.12\textwidth]{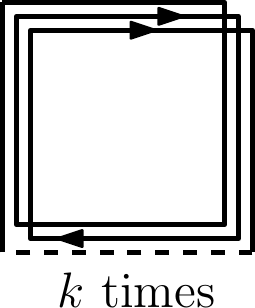}
\caption{A loop obtained by wrapping around a plaquette $k$ times.}
\label{f:wrap}
\end{figure}

A interesting question related to the above is the identification of joint spectral distribution of  the plaquette variables. Via the moment-method, for a single plaquette $p$ in dimension $2$, Theorem \ref{pla100} characterizes the limiting empirical spectral measure for the plaquette variable $Q_{p},$ as pointed out to us by Sourav Chatterjee and an anonymous referee. 
Moreover, one can exactly identify the limiting measure (supported on the unit circle) in this case. 
\begin{ppn}
\label{p:lsd}
Consider $SO(N)$ lattice gauge theory on $\Lambda_{N}\uparrow \Z^2$ as $N\to \infty$. Let $\beta$ be sufficiently small so that the conclusion of Theorem \ref{t:looplimit} holds. For a plaquette $p$, let $F_{N}$ denote the empirical spectral measure of the plaquette variable $Q_{p}$. As $N\to \infty$, $F_{N}$ converges weakly in probability to a deterministic measure $F$ on $S^1$ which has the following density (under the standard parametrization of the unit circle):
$$ f(\theta)=\frac{1}{2\pi} (1+2\beta \cos \theta); \quad \theta\in [0,2\pi).$$
\end{ppn}
Note that for $\beta=0,$ we recover the uniform measure on the circle which is well known to be the limiting spectral distribution of Haar distributed matrices in $SO(N)$ (see \cite{diaconis}).
We sketch a proof of Proposition \ref{p:lsd} and mention other related results in Section \ref{s:oq}.

The proof of Theorem \ref{pla100} will be broken into several smaller results to exhibit the various ways one can exploit the fundamental recursion in Section \ref{rec1}. Some of the proofs might be simplified using the machinery of free probability which we shall establish later. However in this section we will refrain from doing so as the techniques presented in this section has the potential of being applicable outside the realm of exact solvability, i.e.,\ in higher dimensions where the tools of free probability no longer apply. 
\begin{figure}[h]
\centering
\includegraphics[width=.14\textwidth]{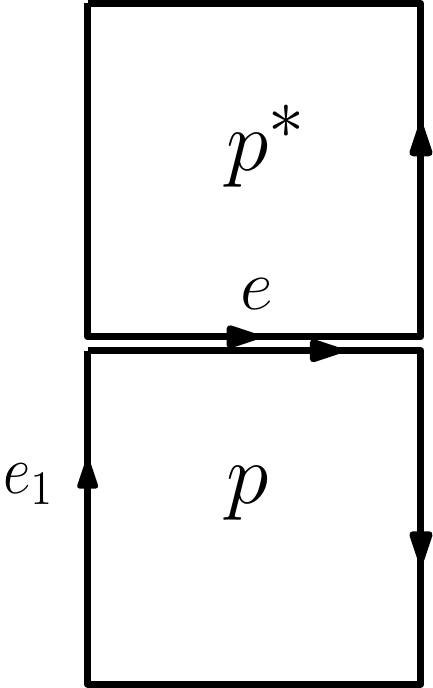}
\caption{Adjacent plaquettes sharing the edge $e$.}
\label{f:pp*}
\end{figure}

Recall the coefficients $a_k(\cdot)$ from \eqref{e:series}.
The goal of this part is to compute $a_k(p)$ for any plaquette  $p$.
Without loss of generality throughout this section we will assume that $p$ is the clockwise oriented plaquette whose bottom left corner is at the origin. It is easy to observe that $a_1(p)=1$. Thus the following proposition shall establish the $k=1$ case of Theorem \ref{pla100}.

\begin{ppn}\label{pla1}For all $k>1,$
$a_{k}(p)=0.$
\end{ppn}

Recall the fundamental recursion in Section \ref{rec1}. For a fixed edge $e$ in $p$, it gives a recursive expression for $a_{k}(p)$ in terms of the $k$-th or lower coefficients of loops or loop sequences obtained from $e$ by elementary operations  (splitting and deformation of the positive or negative kind). Those coefficients can again be recursively evaluated by using the fundamental recursion on them choosing different edges $e$ (we shall say that we use the fundamental recursion \textbf{rooted at $e$} in such a situation). We shall prove Proposition \ref{pla1} and the remaining part of Theorem \ref{pla100} by applying the fundamental recursion sequentially rooted at a carefully chosen sequence of edges that make the solution tractable. Before starting with a systematic analysis of the recursive equations, we give a one-step illustration.   

Let $e$ be the topmost edge of the plaquette $p$. Indeed, we shall always apply the fundamental recursion rooted at the topmost horizontal edge of the loop sequence in question. Let $p^*$ be the plaquette right above $p$ (see Figure \ref{f:pp*}).
 The fundamental recursion rooted at $e$ gives,
\begin{equation}
\label{e:1step}
a_{k}(p)=a_{k-1}(\emptyset)+ a_{k-1}(p\ominus p^*)-a_{k-1}(p\oplus p)- a_{k-1}(p\oplus p^*). 
\end{equation}

Notice that $p$ does not have any repeated edge, and hence the splitting terms are not present in the recursion. Now for $k>1$,  clearly $a_{k-1}(\emptyset)=0$. Each of the other terms now need to be evaluated recursively using the fundamental recursion. For small values of $k$, one can do these computations by hand but to prove the general result we need to study the recursions systematically. To do this we shall parametrize the loops by certain trees where every generation is decorated with a spin in $\{\pm1\},$  as shown in Figure \ref{f:tree}. 

\begin{figure}[h]
\includegraphics[width=.65\textwidth]{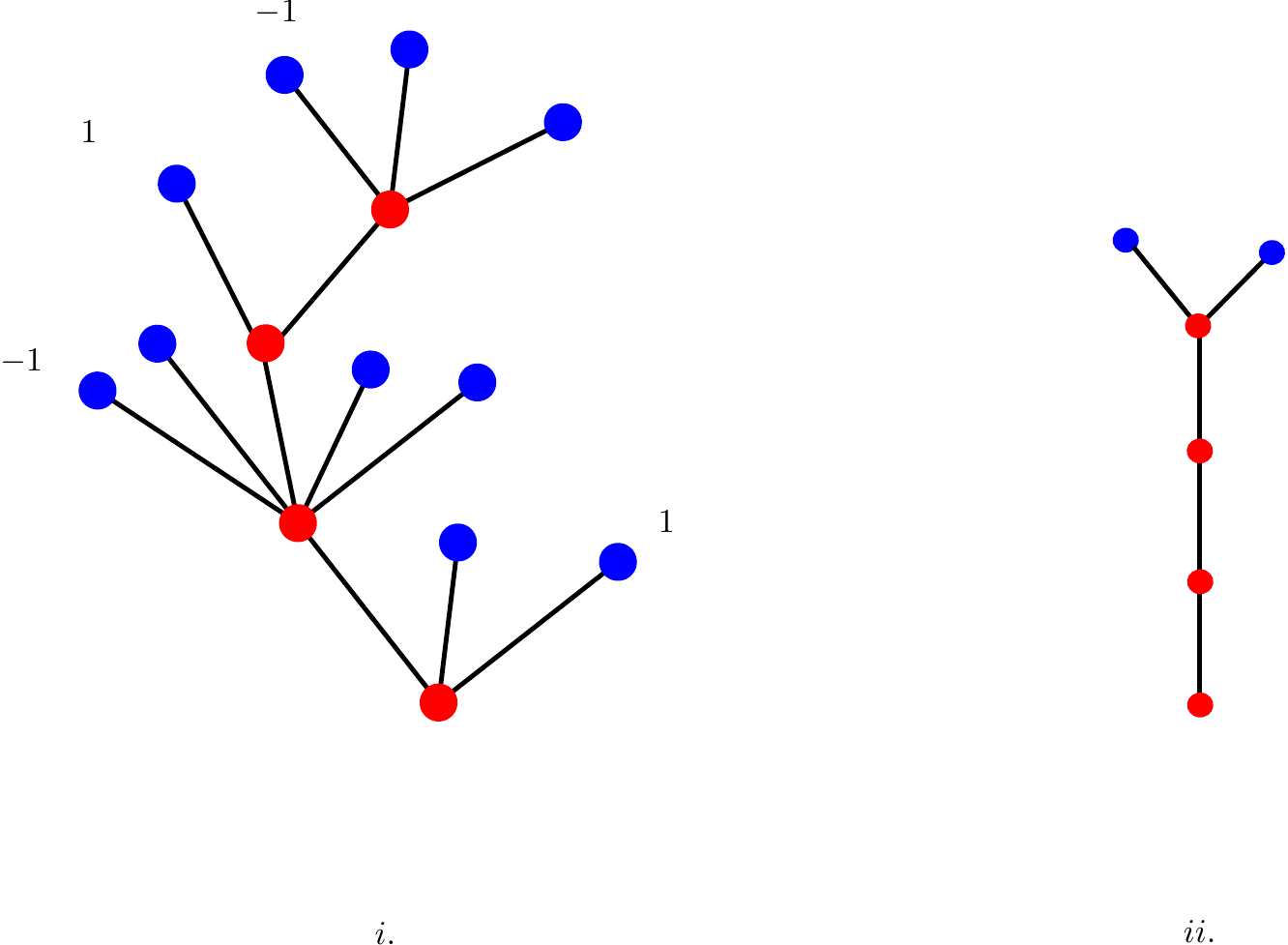}
\caption{$i.$ Decorated tree representation of trajectories. All the elements of a generation have the same $\pm1$ spin attached to them. Moreover in each generation, exactly one element (the red coloured) vertex has offspring. $ii.$ The tree where every generation has size one except the last generation which has size two. (appears in the proof of Lemma \ref{aux1}.) One can think of the vertices in each level of the trees as ordered naturally from left to right in the above representation.}
\label{f:tree}
\end{figure}

Formally we do the following. Let $\mathfrak{T}$ denote the space of all rooted trees, where at each level except at most one, all other vertices are leaves; vertices at each level are ordered and a spin in $\{\pm 1\}$ is associated to each of the levels. We encode a rooted tree $\T$ in $\mathfrak{T}$ with $h\geq 0$ levels in the following way: when $h=0,$ the root is the only vertex at level $0$. If $h\ge1,$ the encoding is done using the sequence  $\{(g_{i},\, k_i,\, \sigma_i),\, i=1,2,\ldots, h\},$ where $g_i$ is the number of vertices at level $i$; we use $k_{i}\leq g_{i}$  to denote the index of the vertex at level $i$ (according to the order of the vertices) which is not a leaf, i.e., which has offspring in $(i+1)-th$ generation. For $i=h$, formally we denote $k_i=0$, because all the vertices at level $h$ are leaves. The spin associated with the level $i$ of $\T$ is denoted by $\sigma_{i}\in \{\pm 1\}$.

Given any tree $\T$ in $\mathfrak{T}$, we now associate a unique loop $\ell(\T)$ to it. For $h=0,$ the single vertex root corresponds to the null loop. Otherwise, $\ell(\T)$ will always be a loop whose bottom left corner is the origin, i.e., the same as bottom left corner of $p$. We shall always describe the loop starting from its bottom left corner (recall a loop is a cyclically equivalent sequence of edges that starts and ends at the the same point). Also for a tree $\T$ and $m\in \Z$, we shall denote by $\ell (\T)_{m}$, the loop obtained by translating each edge of the loop $\ell (\T)$ by $m$ units vertically upwards.

We are now ready to describe the recursive construction of the loop $\ell (\T)$. For the base case $h=1$, if $\T=\{(g_1,0,\sigma_1)\}$, then $\ell(\T)=p_{1,\sigma_1}p_{1,\sigma_1}\cdots p_{1,\sigma_1}$ ($g_1$ times), where $p_{1,1}=p$ and $p_{1,-1}=p^{-1}$. That is, in this case $\ell(\T)$ is just the plaquette $p$ wrapped around $g_1$ times with positive or negative orientation depending on $\sigma_1$. Now let $e_1$  denote the left edge of $p$ oriented upwards (see Figures \ref{f:pp*} and \ref{illustration1}). Suppose we have defined the loop $\ell(\T)$ for all trees $\T$ with less than $h$ levels, and let us now consider $\T=\{(g_i,k_i,\sigma_i):i=1,2,\ldots, h\}$ with $h>1$ which clearly implies that $k_1\neq 0$. Let $\T'$ be the tree obtained from $\T$ by deleting the first level, i.e.,
$$\T'=\{(g'_i, k'_i,\sigma'_i):i=1,2,\ldots, h-1\},$$
where $g'_i=g_{i+1}$, $k'_i=k_{i+1}$ and $\sigma'_i=\sigma_{i+1}$. We define $\ell(\T)$ by the following recursive rule: 
\begin{itemize}
\item if $\sigma_1=1,$
$$\ell(\T)=(p_{1,\sigma_1} \underset{k_1-1 \text{ times}}{\cdots} p_{1,\sigma_1}) e_1 \ell(\T')_{1} e_1^{-1} (p_{1,\sigma_1} \underset{g_1-k_1+1 \text{ times}}{\cdots} p_{1,\sigma_1}).$$ 
\item if $\sigma_1=-1,$
$$\ell(\T)=(p_{1,\sigma_1} \underset{k_1 \text{times}}{\cdots} p_{1,\sigma_1}) e_1 \ell(\T')_{1} e_1^{-1} (p_{1,\sigma_1} \underset{g_1-k_1 \text{times}}{\cdots} p_{1,\sigma_1}).$$ 
\end{itemize}

Let us take a moment to parse the above definition. In the first case we start by wrapping the plaquette $p_{1,1}$ around $k_1-1$ times. By our convention this starts and ends at the origin. Then we move up along the edge $e_1$. Then we trace out the recursively defined loop $\ell(\T')_1$ (observe that after $\ell(\T')$ is translated upwards by one unit, it's starting (and ending) point coincides with the ending point of $e_1$). Finally we trace the edge $e_1^{-1}$ (moving vertically downward to the origin) and finish by wrapping the plaquette $p_{1,\sigma_1}$ around again $g_1-k_1+1$ times.
Note that when $\sigma_1=-1,$ we wrap around initially $k_1$ times instead before tracing $\ell(\T')_1$. The reason behind this, will be clear from the proof of the following lemma (also see Figures \ref{illustration1} and \ref{illustration2}).  For the next lemma, for any $\T \in \mathfrak{T},$ let $e_{\T}$ be the topmost horizontal edge appearing in $\ell(\T)$. The 
next lemma shows that the space of loops corresponding to trees is closed under the operations deformation and splitting at $e_{\T}.$

\begin{lem}\label{closed}For any $\T \in \mathfrak{T},$ all the loops obtained from $\ell(\T),$ by performing deformations or splitting, rooted at $e_{\T},$  belong to the set $\{\ell(\T')_m:\T' \in \mathfrak{T}, m \in \Z\}.$
\end{lem}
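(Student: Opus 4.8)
The plan is to argue by induction on the number of levels $h$ of the tree $\T$, exhibiting explicitly what each of the elementary operations (positive/negative deformation, positive/negative splitting) rooted at the topmost horizontal edge $e_{\T}$ does to $\ell(\T)$, and checking that in every case the result is again of the form $\ell(\T')_m$ for some $\T' \in \mathfrak{T}$ and $m \in \Z$. First I would set up the base case $h \le 1$: here $\ell(\T)$ is the plaquette $p$ wrapped around $g_1$ times (with orientation $\sigma_1$), and the topmost horizontal edge is the top edge $e$ of $p$, occurring with multiplicity $g_1$. The one-step recursion of the type displayed in \eqref{e:1step} shows the deformation terms produce $\emptyset$ (a tree of height $0$), a copy of the plaquette $p^*$ directly above $p$ possibly together with wrappings of $p$ — which is $\ell(\T')_1$ for a two-level tree with a single vertex in the new bottom level — and $p$ wrapped one more time, i.e.\ $\ell(\T')$ with $g_1$ increased; the positive splitting terms (when $g_1 \ge 2$) produce pairs of loops that are again wrappings of $p$, hence again in the family (here one uses the loop-sequence version of the statement implicitly, or one notes that in this section splittings of a wrapped plaquette only produce shorter wrapped plaquettes). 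This is mostly the computation already sketched before the lemma statement.

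For the inductive step, I would take $\T = \{(g_i,k_i,\sigma_i) : i = 1,\dots,h\}$ with $h > 1$, so that $\ell(\T)$ has the nested form $(p_{1,\sigma_1}\cdots) e_1 \ell(\T')_1 e_1^{-1} (p_{1,\sigma_1}\cdots)$ with $\T'$ of height $h-1$. The key observation is that the topmost horizontal edge $e_{\T}$ of $\ell(\T)$ lies in the sub-loop $\ell(\T')_1$ and coincides with $e_{\T'}$ translated up by one — because the additional edges introduced by the level-$1$ construction (copies of the top and bottom edges of $p$ and the vertical edges $e_1, e_1^{-1}$) all lie weakly below the top edge of $p^{*}$, and by induction $e_{\T'}$ is strictly above that. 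Consequently, every elementary operation rooted at $e_{\T}$ only touches the $\ell(\T')_1$ portion and acts on it exactly as the corresponding operation rooted at $e_{\T'}$ acts on $\ell(\T')$, up to the vertical translation by $1$; the outer "collar" $(p_{1,\sigma_1}\cdots) e_1 (\,\cdot\,) e_1^{-1} (p_{1,\sigma_1}\cdots)$ is carried along unchanged. By the inductive hypothesis the operation turns $\ell(\T')$ into some $\ell(\T'')_{m'}$; wrapping the collar back around this then realizes exactly the defining recursive rule for a new tree $\widehat{\T} \in \mathfrak{T}$ obtained by prepending a level with data $(g_1, k_1, \sigma_1)$ to (a vertically shifted version of) $\T''$, so that $\ell(\T) \mapsto \ell(\widehat{\T})_m$ for an appropriate $m$. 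One has to be a little careful about backtrack erasure $[\,\cdot\,]$: since the collar consists of edges $e_1$ and $e_1^{-1}$ adjacent to the modified sub-loop, I would check that no spurious cancellation across the collar boundary occurs except in the degenerate cases, where it simply reduces the height or the level sizes — which is still inside $\mathfrak{T}$.

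The main obstacle I anticipate is bookkeeping rather than conceptual: precisely matching the $k_i$ (index of the non-leaf vertex) and the two slightly different wrapping counts in the $\sigma_1 = +1$ versus $\sigma_1 = -1$ cases with what the deformation/splitting at $e_{\T}$ actually produces, and making sure the "exactly one vertex per level has offspring" constraint of $\mathfrak{T}$ is preserved — in particular that splitting, which creates \emph{two} loops, still lands in the family (this is where the single-loop family $\{\ell(\T')_m\}$ might at first seem too small, and one must verify that in this rooted setting the two pieces produced by a split at $e_{\T}$ are again each a wrapped plaquette or each of the form $\ell(\T')_m$, using the explicit nested structure). I would organize the verification as a short case table: (i) negative deformation with a plaquette containing $e_{\T}$, (ii) positive deformation likewise, (iii) positive splitting at two occurrences of $e_{\T}$, (iv) negative splitting at an occurrence of $e_{\T}$ and of $e_{\T}^{-1}$ — and in each row point to the picture (Figures \ref{illustration1}, \ref{illustration2}) and to the inductive hypothesis applied to $\ell(\T')$. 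The upshot is that $\mathfrak{T}$ is exactly engineered so that this closure holds, which is why the subsequent computation of $a_k(p)$ can be run entirely within the tree parametrization.
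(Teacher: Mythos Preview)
The proposal is correct and takes essentially the same approach as the paper's proof. Both argue by induction on the height of the tree: the base case is handled by explicit computation of the deformations of a (wrapped) plaquette, and the inductive step hinges on the observation that $e_{\T}$ lies inside the translated sub-loop $\ell(\T')_1$, so that operations rooted at $e_{\T}$ reduce to operations on $\T'$ with the outer collar unchanged---the paper phrases this as ``a copy of the plaquette $p_{2,\pm1}$ exists'' so that subsequent operations are rooted at the new top edge $f_2$, and both treat splitting by noting that the extra component produced is simply a wrapped plaquette.
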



\begin{figure}[h]
\includegraphics[width=.85\textwidth]{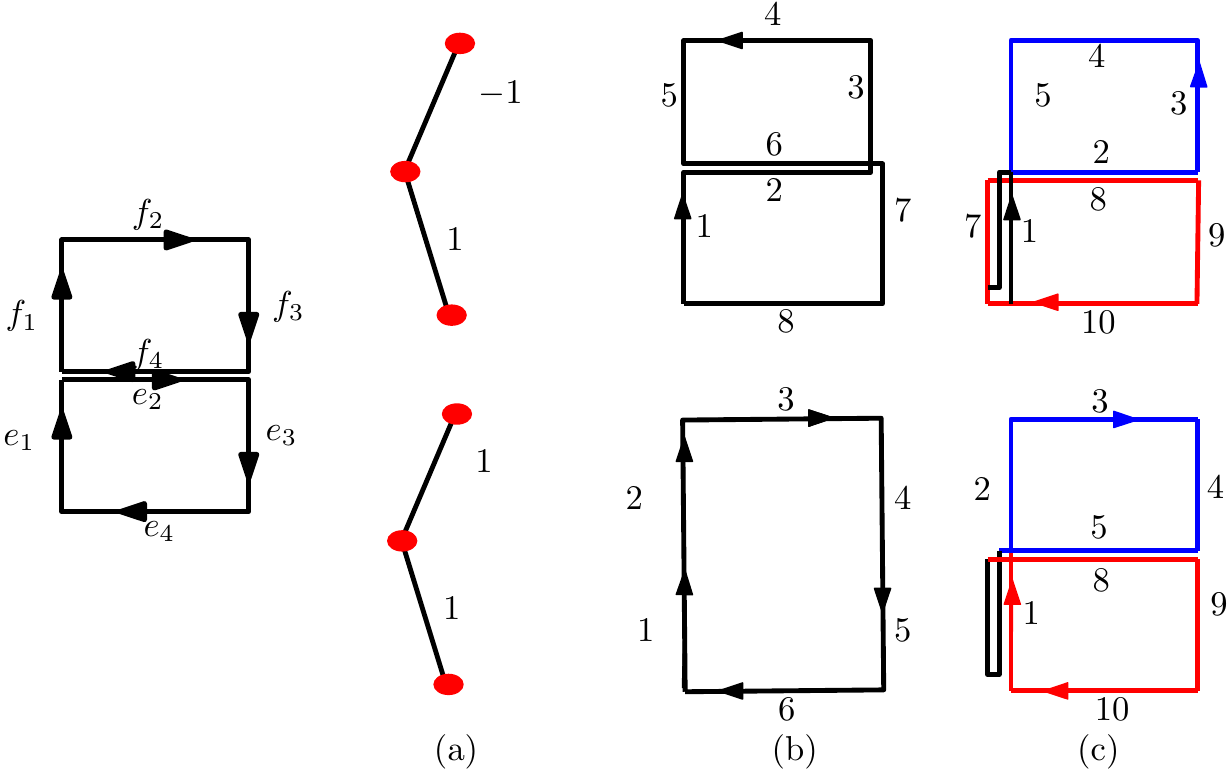}
\caption{The above figure compares the tree construction when a deformation is applied on the plaquette $p_{1,1}$ rooted at $e_2$. The top and the bottom figures correspond to the loops $p_{1,1}\oplus p_{2,-1}$ and $p_{1,1}\ominus p_{2,1},$ respectively. (a) shows the trees, (b) shows the loops obtained, (c) describes the loops obtained by the rules of constructing $\ell(\T).$ }
\label{illustration1}
\end{figure}

\begin{proof} We will mention the main observations leading to the proof, often skipping the clear but tedious combinatorial details. 
Fix $\T\in \mathfrak{T},$ and let $h$ be the height of the tree and $p_{h,1}$ denote the clockwise oriented plaquette  starting at the origin, shifted up at height $h$ and similarly  $p_{h,-1}$ denotes the counterclockwise version. 
We first verify that the lemma is true for $\T$ corresponding to the plaquettes $p_{1,\pm1}$ i.e., $(1,0,\pm 1)$. For concreteness let $p_{1,1}=e_1e_2e_3e_4$ and $p_{2,1}=f_1f_2f_3f_4,$ (see Figure \ref{illustration1}).
Now the case of deforming $p_{1,1}$ using $p_{1,\pm1}$ 
 is easy, and hence we consider the other cases.
Note that from definitions (Section \ref{cst}),  
\begin{align*}
p_{1,1}&\oplus p_{2,-1} =e_1f^{-1}_4f^{-1}_3f^{-1}_2f^{-1}_1e_2e_3e_4,\\
p_{1,1}&\ominus p_{2,1} \,\,\,=e_1f_1f_2f_3e_3e_4.
\end{align*}
Now let $\T_1=\{(1,1,1),(1,0, -1)\}$ and $\T_2=\{(1,1,1),(1,0,1)\}.$
Then according to our definitions,
\begin{align*}
\ell(\T_1) &=e_1\underbrace{f^{-1}_4f^{-1}_3f^{-1}_2f^{-1}_1}_{p_{2,-1}}e^{-1}_1e_1e_2e_3e_4,\\
\ell(\T_2) &=e_1\underbrace{f_1f_2f_3f_4}_{p_{2,1}}e^{-1}_1e_1e_2e_3e_4.
\end{align*}
Similarly if we started with $p_{1,-1}$ instead, then we have the following, 
\begin{align*}
p_{1,-1}&\oplus p_{2,1}\,\,\, =e^{-1}_4e^{-1}_3f_4f_1f_2f_3e^{-1}_2e^{-1}_1,\\
p_{1,-1}&\ominus p_{2,-1}=e^{-1}_4e^{-1}_3f^{-1}_3f^{-1}_2f^{-1}_1e^{-1}_1.
\end{align*}
In this case let, $\T_1=\{(1,1,-1),(1,0, 1)\}$ and $\T_2=\{(1,1,-1),(1,0,-1)\}.$
Once again our definitions yield,
\begin{align*}
\ell(\T_1) &=e^{-1}_4e^{-1}_3e^{-1}_2e^{-1}_1e_1\underbrace{f_1f_2f_3f_4}_{p_{2,1}}e^{-1}_1, \\
\ell(\T_2) &=e^{-1}_4e^{-1}_3e^{-1}_2e^{-1}_1e_1\underbrace{f^{-1}_4f^{-1}_3f^{-1}_2f^{-1}_1}_{p_{2,-1}}e^{-1}_1.
\end{align*}
It is easy to check that the actual definitions and the tree constructions are equivalent up to backtracks.  
The proof now can be completed by observing that in each of the tree constructions,   a copy of the plaquette $p_{2,\pm1}$ exists.
Note that this is ensured by the two different definitions for different orientations, of the tree to loop map .  Now loop operations on the trees $\T_1$ and $\T_2,$ are rooted at $f_2$ (see Figures \ref{illustration1} and \ref{illustration2}).  Thus we are done using induction and the previous argument repeatedly along with the observation that  any splitting at the top edge, only results in another component which is a wrap around of a plaquette which clearly corresponds to a tree as well.  
\end{proof}

\begin{figure}[h]
\includegraphics[width=.85\textwidth]{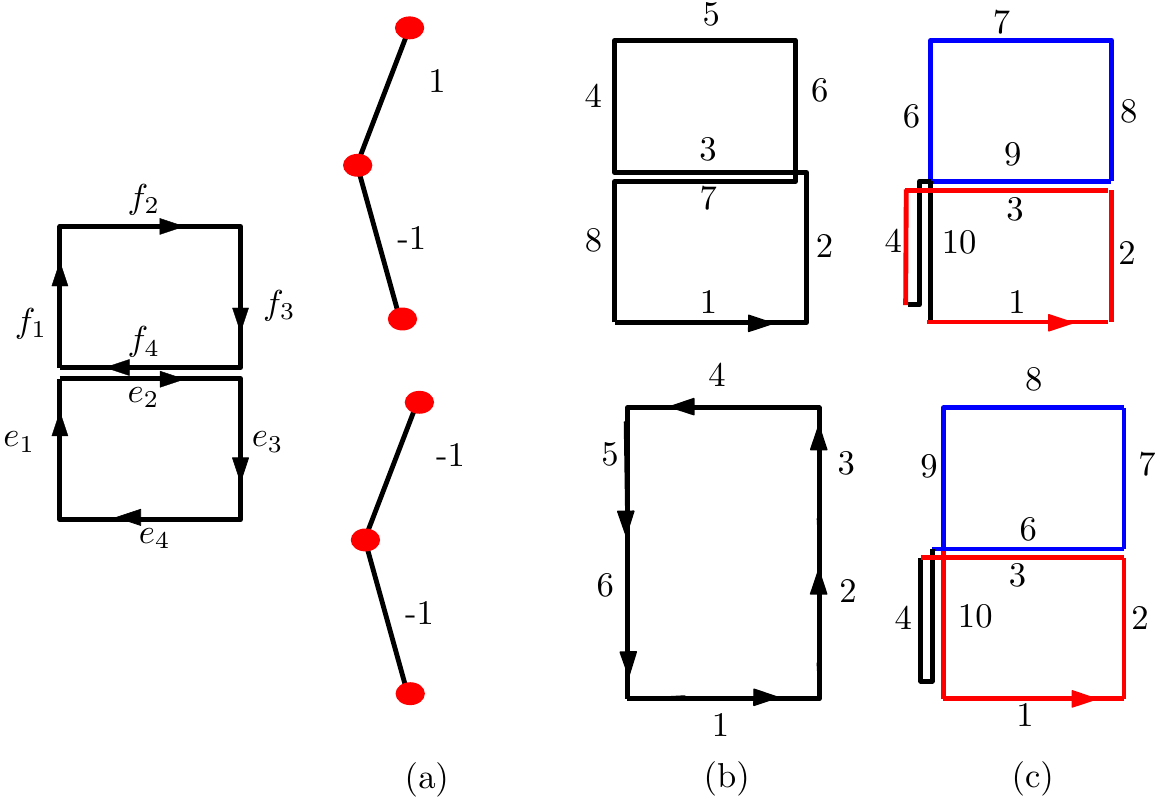}
\caption{As in Figure \ref{illustration1}, the above figure compares the tree construction when a deformation is applied on the plaquette $p_{1,-1}$. The top and the bottom figures correspond to the loops $p_{1,-1}\oplus p_{2,1}$ and $p_{1,-1}\ominus p_{2,-1},$ respectively.(a) shows the trees, (b) shows the loops obtained, (c) describes the loops obtained by the rules of constructing $\ell(\T).$  The numbers on the edges reveal the order in which they are traversed. }
\label{illustration2}
\end{figure}

Notice that for $\ell(\T)$ as above, the vertical edges at every level $i$ occurs at least $g_i$ many times (always with the same orientation) and hence any $2$-chain $x$ whose boundary is $\ell(\T),$ {must contain the plaquette $p_{i}$ (or its inverse) with coefficient at least $g_i$. From \eqref{area} it follows that}
for $\T=\{(g_i,k_i,\sigma_i):i=1,2,\ldots , h\},$ we have
\begin{equation}\label{obs1}
{\rm{area}}(\ell(\T))=\sum_{i=1}^{h}{g_{i}}.
\end{equation}

Recall that our objective is to evaluate the coefficients $a_{k}(p)$. The reason behind introducing the trees and the associated loops is that while we repeatedly use the fundamental recursion rooted at a  carefully chosen edge $e,$  the loops we shall obtain shall all be associated with trees in $\mathfrak{T}$ in the manner described above. Thus our analysis involves computing  $a_{k}(\ell(\T))$ for $\T\in \mathfrak{T}$. We now quote the  following result established by Chatterjee \cite[Lemma 14.1]{Cha15} which we will use. 
\begin{lem}\label{min1} For any non-null loop $\gamma$,  the minimum number of deformations in a vanishing trajectory starting from $\gamma$ is at least $\rm{area}(\gamma).$
\end{lem}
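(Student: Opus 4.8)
The plan is to produce a single integer-valued quantity attached to every loop sequence which equals ${\rm area}(\gamma)$ at the start of a trajectory, equals $0$ at the end, is unaffected by every loop operation except deformation, and changes by at most $1$ under a single deformation; the bound then drops out immediately. The content of the argument is that ``deformation by a plaquette $p$ changes the signed area by exactly $\pm p$'', while splitting, merger and twisting do not change it at all.

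First I would record a structural fact about cell complexes in $\Z^2$: the differential $\delta$ is injective on the free $\Z$-module of \emph{finitely supported} $2$-chains. Indeed, if $\delta\bigl(\sum_p \eta_p p\bigr)=0$ with non-empty support, pick a plaquette $p_0$ in the support whose top horizontal edge is highest (breaking ties by taking the leftmost such); this edge is the top edge of no other plaquette and the bottom edge of a plaquette which is strictly higher, hence not in the support, so its coefficient in $\delta\bigl(\sum_p \eta_p p\bigr)$ is $\pm\eta_{p_0}\neq 0$, a contradiction. Consequently, for a loop sequence $S=(\ell_1,\dots,\ell_n)$ the $1$-chain $c(S):=\sum_{i=1}^{n} r(\ell_i)$ --- which lies in the image of $\delta$ because each $r(\ell_i)$ bounds --- determines a \emph{unique} $2$-chain $x(S)$ with $\delta x(S)=c(S)$. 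Set $\Phi(S):={\rm area}(x(S))$. By the very definition of the area of a loop together with this uniqueness, $\Phi((\gamma))={\rm area}(\gamma)$, while $\Phi\equiv 0$ on a sequence all of whose loops are null.

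Next I would check invariance of $c(S)$ --- hence of $x(S)$ and of $\Phi(S)$ --- under every operation other than deformation. Backtrack erasure preserves $c$ since $r(ee^{-1})=0$. For splitting, the explicit definitions of positive and negative splitting yield, in the $1$-chain module, $r(\ell)=r(\times^{1}_{x,y}\ell)+r(\times^{2}_{x,y}\ell)$ (both sides are the same multiset of signed edges), so $c$ is unchanged; merger is the inverse operation, so likewise. Twisting merely rearranges the edge sequence of a loop, so it too preserves $r$. For deformation, the formulas $\ell\ominus_x p=[a c^{-1}d^{-1}b]$ and $\ell\oplus_x p=[a e d c e b]$ and their opposite-orientation variants show that passing from $\ell$ to $\ell\ominus_x p$ deletes, and passing to $\ell\oplus_x p$ inserts, exactly the non-$e$ edges of $p$, with the net effect on the edge list being precisely the full cycle of $p$; since the subsequent backtrack erasure does not change $r$, we get $c(S')=c(S)\pm\delta(p)$. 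Hence $x(S')=x(S)\pm p$, so only the coefficient of $p$ in the $2$-chain changes, and by $\pm1$, giving $|\Phi(S')-\Phi(S)|\le 1$.

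Finally I would conclude: a vanishing trajectory starting from $\gamma$ is a finite chain of loop sequences beginning at $(\gamma)$ and ending at a sequence of null loops, each step being one of the loop operations; along it $\Phi$ runs from ${\rm area}(\gamma)$ down to $0$, is constant at every non-deformation step, and moves by at most $1$ at every deformation step, so at least ${\rm area}(\gamma)$ of the steps must be deformations. I expect the only real work to be bookkeeping: verifying the identity ``deformation with plaquette $p$ changes $c$ by $\pm\delta(p)$'' uniformly across the four sub-cases (positive versus negative deformation, and $e$ occurring in $\ell$ with the same or the opposite orientation as in $p$), and confirming from the definitions in \cite{Cha15} that twisting indeed leaves $r$ unchanged; once these local checks are in place the argument is immediate.
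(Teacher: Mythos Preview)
The paper does not supply its own proof of this lemma; it simply quotes \cite[Lemma 14.1]{Cha15}. So there is no ``paper's proof'' to compare against, and the relevant comparison is with Chatterjee's argument.

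Your proof is correct in the planar setting in which the lemma is being invoked here. The potential-function idea --- track the unique $2$-chain bounding the running loop sequence, note it is unchanged by splitting/merger/twisting and altered by $\pm p$ under a deformation --- is exactly the right mechanism, and the local checks you list are all routine. The one genuinely two-dimensional ingredient is your appeal to injectivity of $\delta$ on finitely supported $2$-chains: this is what lets you speak of \emph{the} $2$-chain $x(S)$ and hence define $\Phi$ unambiguously. In $\Z^d$ for $d\ge 3$ there are nontrivial $2$-cycles and your $x(S)$ is no longer unique, so the argument as written does not port directly.

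Chatterjee's proof (which must work in all dimensions) runs the same computation in the other direction and thereby avoids the uniqueness issue: starting from the null end of a vanishing trajectory with the zero $2$-chain, one adds or subtracts the relevant plaquette at each deformation step and does nothing at the other steps, producing at the start a specific $2$-chain $x$ with $\delta x = r(\gamma)$ and ${\rm area}(x)$ at most the number of deformations; hence ${\rm area}(\gamma)\le {\rm area}(x)\le \#\{\text{deformations}\}$. This is the same bookkeeping you do, organized so that one \emph{constructs} a bounding chain rather than invoking the unique one, and it works verbatim in any dimension. For the planar case the two arguments are equivalent.
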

In the above lemma, a vanishing trajectory is a sequence of loop(s) starting with $\gamma$ and finishing with the null loop, where each element of the sequence is obtained from the preceding one from a deformation or splitting operation. Theorem 3.1 of \cite{Cha15} implies that the if any vanishing trajectory starting from $\gamma$ requires at least $k$ deformations, then $a_j(\gamma)=0$ for all $j<k$. In particular, Lemma \ref{min1} implies that for any loop $\ell$ we have  $a_{k}(\ell)=0$ for all $k< {\rm area}(\ell)$. We shall also need the following easy consequence of Lemma \ref{min1} and Theorem \ref{t:factor}.

\begin{lem}
\label{l:coefffactor}
For loops $\ell_1$ and $\ell_2$ with areas $k_1$ and $k_2$ respectively we have 
$$a_{k_1+k_2}(\ell_1,\ell_2)= a_{k_1}(\ell_1)a_{k_2}(\ell_2).$$
\end{lem}

With these auxiliary results at our disposal, we can now prove the following comparison of coefficients between two loops $\ell(\T)$ and $\ell (\T')$.

\begin{lem}\label{lem2}
For any two trees $\T=\{(g^{(1)}_i,k^{(1)}_i,\sigma^{(1)}_i): i=1,2,\ldots, h\}$ and $\T'=\{(g^{(2)}_i,k^{(2)}_i,\sigma^{(2)}_i): i=1,2,\ldots, h\}$ with the property that for all $1\le i\le h,$
$$(g^{(1)}_i,k^{(1)}_i)=(g^{(2)}_i,k^{(2)}_i),$$ for $k={\rm{area}}(\ell(\T))={\rm{area}}(\ell(\T')),$
\begin{equation}\label{area1}
a_{k}(\ell(\T))=a_{k}(\ell(\T')).
\end{equation}
 \end{lem}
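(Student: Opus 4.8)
The plan is to prove Lemma~\ref{lem2} by induction on the height $h$ of the trees, showing that the value $a_k(\ell(\T))$ when $k = {\rm area}(\ell(\T))$ depends only on the sequence $\{(g_i, k_i) : i = 1, \ldots, h\}$ and not on the spins $\{\sigma_i\}$. The engine is the fundamental recursion of Section~\ref{rec1} applied rooted at the topmost horizontal edge $e_{\T}$, combined with Lemma~\ref{closed}, which guarantees that every loop (or loop pair) produced by a deformation or splitting at $e_{\T}$ is again of the form $\ell(\T'')_m$ or a pair of such. Because we only care about the \emph{top} coefficient $a_k$ with $k = {\rm area}(\ell(\T))$, the degree-lowering structure of the recursion (negative and positive deformations drop the index from $k$ to $k-1$, while splittings keep it at $k$) together with Lemma~\ref{min1} and Lemma~\ref{l:coefffactor} will force most terms to be controlled: a term $a_{k-1}(\ell(\T'')_m)$ contributes only if ${\rm area}(\ell(\T'')) \le k-1$, and a splitting term $a_k(\times^1 \ell, \times^2 \ell)$ factors as $a_{k_1}(\times^1\ell)\, a_{k_2}(\times^2\ell)$ with $k_1 + k_2 = k$ by Lemma~\ref{l:coefffactor}, where each split component is again a tree-loop of strictly smaller height.

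First I would set up the base case $h = 1$: here $\ell(\T)$ is a plaquette wrapped $g_1$ times with orientation $\sigma_1$, so ${\rm area} = g_1$, and the statement is that $a_{g_1}(\ell(\T))$ does not depend on $\sigma_1$ — this is immediate since $\ell(\T)$ and its orientation-reversal $\ell(\T)^{-1}$ trace the same plaquettes with opposite signs but identical $2$-chain support, hence identical area, and the recursion rooted at $e_{\T}$ produces, for the two spins, loop families that are mutually orientation-reversed; one checks that the $a_{k-1}$ and splitting terms match up in pairs. Then for the inductive step, with $\T$ of height $h$, I would write out \eqref{rec10} rooted at $e_{\T}$ for $\ell(\T)$ and separately for $\ell(\T')$ (same $(g_i, k_i)$, possibly different spins), and argue term-by-term that the two expansions agree: the deformation terms become $a_{k-1}$ of tree-loops whose area is at most $k-1$, so by Lemma~\ref{min1} only those of area \emph{exactly} $k-1$ survive, and on those the induction hypothesis (applied after checking the $(g,k)$-data is spin-independent, which follows from the explicit recursive construction of $\ell(\T)$ and the structure exhibited in Figures~\ref{illustration1}--\ref{illustration2} and Lemma~\ref{closed}) gives equality; the splitting terms factor via Lemma~\ref{l:coefffactor} into products over strictly shorter trees, again handled by induction.

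The main obstacle I anticipate is \emph{bookkeeping the correspondence between the recursion applied to $\ell(\T)$ and to $\ell(\T')$ at the level of the $(g_i,k_i)$-data}: one must verify that a deformation or splitting at $e_{\T}$ acts on the combinatorial data $\{(g_i,k_i)\}$ in a way that is genuinely independent of the spins $\{\sigma_i\}$ — the two cases $\sigma_1 = \pm 1$ in the definition of $\ell(\T)$ use different wrap counts ($k_1 - 1$ versus $k_1$, and $g_1 - k_1 + 1$ versus $g_1 - k_1$), so one has to check that the resulting trees $\T''$, after translation normalization, carry matching $(g,k)$-sequences in both cases and that the sign bookkeeping in negative deformations/splittings (where orientation reversals are absorbed, as noted after the definition of negative deformation) does not disturb this. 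Once this combinatorial invariance is in place, the degree/area constraints do the rest, and the induction closes. A secondary, purely technical point is ensuring that translation by $m$ units (the $\ell(\T')_m$ versus $\ell(\T')$ distinction) never changes areas or coefficients, which is clear since translation is an automorphism of the lattice cell complex commuting with all loop operations.
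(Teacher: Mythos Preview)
Your overall strategy matches the paper's proof: root the fundamental recursion at $e_{\T}$, kill the area-increasing terms via Lemma~\ref{min1}, and match the surviving negative-deformation and positive-splitting terms between $\ell(\T)$ and $\ell(\T')$ via a bijection that respects the $(g_i,k_i)$-data. Your identification of the bookkeeping obstacle is also accurate, and is exactly what the paper addresses by noting that $e_{\T'}$ equals $e_{\T}$ or $e_{\T}^{-1}$ according to whether $\sigma^{(1)}_h=\sigma^{(2)}_h$ or not, so that the locations of the rooted edge and the corresponding deformations/splittings match up canonically.

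However, your induction variable is wrong, and this is a genuine gap. You propose inducting on the height $h$, but the induction does not close: the only surviving deformation term (negative deformation with the plaquette $p_{-}(e)$ below $e_{\T}$) reduces $g_h$ by one and hence reduces the \emph{area} by one, but when $g_h>1$ the resulting tree still has height $h$, so the induction hypothesis on height does not apply. Likewise, a positive splitting at two occurrences of $e_{\T}$ produces one component that is a plaquette-wrap (height $1$ after translation) and a second component that retains height $h$ with a smaller top-level count; your claim that ``each split component is again a tree-loop of strictly smaller height'' is therefore false in general. The paper repairs this by inducting on the \emph{area} $k={\rm area}(\ell(\T))=\sum_i g_i$ instead: every surviving deformation term has area exactly $k-1$, and every splitting factors (via Lemma~\ref{l:coefffactor}) into two pieces of areas $k_1,k_2\ge 1$ with $k_1+k_2=k$, so both are strictly smaller. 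With that single change your argument goes through and is essentially the paper's proof.
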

\begin{proof}
For brevity, throughout this proof, we will denote $\ell(\T)$ and $\ell(\T')$ by $\ell$ and $\ell'$ respectively.
The proof is by induction on the area of $\ell$ and the recursion in \eqref{rec10}. For any $\T$ as in the hypothesis, the edge on which we root the recursion is $e=e_{\T}$; the highest horizontal edge appearing  in $\ell$.  Note that by hypothesis this implies the highest horizontal edge $e'=e_{\T'}$ appearing in $\ell'$, is either $e$ or $e^{-1}$ depending on whether $\sigma^{(1)}_h= \sigma^{(2)}_h$ or $\sigma^{(1)}_h= -\sigma^{(2)}_h$ respectively. Further observe that the multiplicity of $e$ in $\ell$ is equal to the multiplicity of $e'$ in $\ell'$. We want to compare the terms on the right hand sides for the fundamental recursion for  $a_{k}(\ell)$ rooted at $e$ and the fundamental recursion of $a_{k}(\ell')$ rooted at $e'$.

Observe that $e$ in $\ell,$ (similarly $e'$ in $\ell'$) appears only in one orientation. So the negative splitting terms do not appear. Notice further that positive deformation with either of the plaquettes adjacent to $e$ (resp.\ $e'$) always increases the area by one and hence the terms corresponding to positive deformation is zero by Lemma  \ref{min1}. So we only need to show that the negative deformation terms and the positive splitting terms match up in both the recursions. 
To see this, notice that for negative deformation of $\ell$ with a plaquette $p_{e}$ at any location $x$ where the edge $e$ occurs in $\ell$, there exists a unique corresponding location $x'$ at $\ell'$ (where $e'$ occurs) and a plaquette $p_{e'},$ such that either both the negative deformations decrease the area or both increase the area. The cases where the area increases can be ignored as above, and for the cases where the area decreases, by Lemma \ref{closed}, we get two loops $\ell_*$ and $\ell'_{*}$ of strictly smaller area which still satisfy the hypothesis of the lemma;  i.e., there exists trees $\T_{*}$ and $\T'_{*}$ satisfying the hypothesis in the lemma such that $\ell_{*}=\ell(\T_{*})$ and $\ell'_*=\ell(\T'_{*}).$  By induction, these two terms are equal, and hence the negative deformation terms overall are equal for both the recursions.

The positive splitting terms can be dealt with similarly. 
Observe that whenever we perform a splitting along two appearances of the edge $e$ in $\ell$, we can find corresponding appearances of $e'$ in $\ell'$ such that the (translates of) resulting loops correspond to trees $(\T_1, \T_2)$ and $(\T'_1, \T'_2)$ respectively where the trees have the following properties:
\begin{itemize}
\item $\T_1$ and $\T'_1$ (also $\T_2$ and $\T'_2$) satisfy the hypothesis of the lemma. In particular, 
$${\rm area}(\ell (\T_1))= {\rm area}(\ell (\T'_1))=k_1~\text{and}~{\rm area}(\ell (\T_2))= {\rm area}(\ell (\T'_2))=k_2.$$
\item $k=k_1+k_2$.
\end{itemize}
That the splitting terms are equal now follows from the induction hypothesis and Lemma \ref{l:coefffactor}. This completes the proof of the lemma.
\end{proof}

We can now characterize all the trees $\T$ such that $a_k(\ell (\T))\neq 0$ where $k={\rm{area}}(\ell(\T)).$

\begin{lem}\label{aux1}For any $\T\in \mathfrak{T},$ and ${k}=\rm{area}(\ell(\T)),$
$$a_{k}(\ell(\T))=1\, \text{or}\,\, 0,$$ depending on whether $\T$ is a path or not.
\end{lem}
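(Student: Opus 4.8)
The plan is to induct on $k = {\rm area}(\ell(\T)) = \sum_{i=1}^h g_i$, using the fundamental recursion \eqref{rec10} rooted at $e_{\T}$, the topmost horizontal edge of $\ell(\T)$, together with the structural control supplied by Lemmas \ref{closed}, \ref{min1}, \ref{l:coefffactor} and \ref{lem2}. The base case is $h \le 1$: for a single plaquette wrapped $g_1$ times, $\ell(\T) = p_{1,\sigma_1}^{g_1}$ has area $g_1$, and the recursion rooted at the top edge $e$ of the plaquette (which has multiplicity $g_1$ in one orientation) kills all positive deformations by Lemma \ref{min1} (they strictly increase area), leaves the negative deformations with the two adjacent plaquettes and the positive splittings. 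One computes that $a_{g_1}(p_{1,\sigma_1}^{g_1})$ is $1$ when $g_1 = 1$ (the path of height one) and $0$ when $g_1 > 1$ — this is exactly the content of Theorem \ref{pla100}/Proposition \ref{pla1}, so the base case is essentially already in hand, modulo recording that the tree is a path iff $g_1 = 1$.

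For the inductive step, fix $\T = \{(g_i,k_i,\sigma_i)\}_{i=1}^h$ with $h \ge 2$, so $k_1 \ge 1$, and write out the recursion for $a_k(\ell(\T))$ rooted at $e_{\T}$. As in the proof of Lemma \ref{lem2}: $e_{\T}$ appears in only one orientation, so there are no negative splitting terms; positive deformations increase the area, hence vanish by Lemma \ref{min1}; so only negative deformations and positive splittings survive, and every surviving term is, by Lemma \ref{closed}, a coefficient $a_{k'}$ (or $a_{k_1'+k_2'}$) of loops (or loop pairs) again coming from trees in $\mathfrak{T}$ of strictly smaller total area. Applying the induction hypothesis and Lemma \ref{l:coefffactor} to each term, the right-hand side becomes an explicit $\pm 1/m$-weighted count of those smaller trees/pairs that are paths. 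The key combinatorial observation is that among all the negative-deformation and positive-splitting moves at $e_{\T}$ that decrease the area, exactly one produces a path (and a trivial second component in the splitting case) precisely when $\T$ itself is a path, and that when $\T$ is not a path the nonzero contributions cancel in signed pairs — one should organize this by noting that a negative deformation at $e_{\T}$ with the adjacent plaquette $p_{e_{\T}}$ ``peels off'' one unit of the top generation, so it lands on the tree with $(g_h, \cdot, \sigma_h)$ replaced by $(g_h - 1, \cdot, \cdot)$ (or deletes that level if $g_h = 1$), while the positive-splitting moves split off a wrapped-plaquette component of some size $j$ from the top level, contributing $a_j(p^{j})\, a_{k-j}(\text{smaller tree})$, which is zero unless $j = 1$ by the base case.

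I expect the main obstacle to be the bookkeeping in this last step: correctly identifying, for a general $\T$, which of the $x \in C$ locations and which adjacent plaquettes in $\mathcal{P}^+(e_{\T})$ give area-decreasing moves, matching them up, and verifying the signed cancellation so that the weighted sum collapses to exactly $1$ (path case) or $0$ (non-path case). The cleanest route is probably to reduce immediately, via Lemma \ref{lem2}, to a canonical representative tree in each equivalence class (so only the data $(g_i,k_i)$ matters, and one may take all $\sigma_i = 1$ and $k_i$ in a normalized position), and then peel the top generation: express $a_k(\ell(\T))$ in terms of $a_{k-1}$ of the tree with the top generation reduced by one, showing the recursion is $a_k(\ell(\T)) = a_{k-1}(\ell(\T^{-}))$ when that reduction stays within $\mathfrak{T}$, plus splitting corrections that vanish by the base case; iterating this peeling down to height one then gives $1$ iff every $g_i = 1$, i.e. iff $\T$ is a path, and $0$ otherwise. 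Care is needed that the peeling is legitimate — i.e. that reducing $g_h$ by one still yields a tree whose associated loop is (up to backtracks and translation) one of the loops produced by the recursion — but this is exactly what Lemma \ref{closed} guarantees.
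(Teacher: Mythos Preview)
Your overall strategy---induct on area, root the recursion at $e_{\T}$, discard positive deformations and negative splittings by orientation and Lemma~\ref{min1}, and reduce to smaller trees via Lemmas~\ref{closed} and~\ref{l:coefffactor}---is exactly the paper's approach, and your second paragraph correctly identifies the mechanism. Two points need correction.

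First, a minor circularity: you invoke Theorem~\ref{pla100}/Proposition~\ref{pla1} for the base case, but those are proved \emph{using} Lemma~\ref{aux1} (and Lemma~\ref{aux2}). The only base case available is the trivial $a_1(p)=1$; the vanishing of $a_{g_1}(p^{g_1})$ for $g_1>1$ must come out of the induction itself.

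Second, and more seriously, the ``cleanest route'' in your last paragraph does not work as stated. You claim the recursion collapses to $a_k(\ell(\T)) = a_{k-1}(\ell(\T^{-}))$ plus ``splitting corrections that vanish by the base case'', but you have already (correctly) noted that the $j=1$ splittings contribute $a_1(p)\,a_{k-1}(\text{smaller tree})$, which is not zero. If those corrections really vanished, iterating your peeling would give $a_k(\ell(\T))=1$ for \emph{every} tree. Concretely, take $\T$ with $g_h=2$ and $g_i=1$ for $i<h$ (the almost-path of Figure~\ref{f:tree}~ii, area $k=h+1$): with $m=2$, the two area-decreasing negative deformations each land on the path of area $k-1$, contributing $\tfrac{1}{2}\cdot 2\cdot 1=+1$; the two ordered $j=1$ positive splittings each give (plaquette, path of area $k-1$), contributing $-\tfrac{1}{2}\cdot 2\cdot 1\cdot 1=-1$. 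Their cancellation is precisely what makes the answer $0$; you cannot discard the splitting term.

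The paper's fix is to abandon a universal peeling formula and instead split the non-path case in two. If some $g_i>1$ with $i<h$, or $g_h>2$, then \emph{every} area-decreasing deformation and every positive splitting at $e_{\T}$ produces at least one component whose tree is still not a path (for a splitting into $p^j$ and a tree with top size $g_h-j$, either $j\ge 2$ kills the first factor, or the second factor retains a level of size $\ge 2$), so every surviving term vanishes by induction. The only remaining non-path case is the almost-path above, where the single deformation term and the single splitting term cancel as just computed. Your second-paragraph description is compatible with this; it is only the peeling shortcut that needs to be replaced by this two-subcase analysis.
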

\begin{proof}
Suppose $\T=\{(g_i,k_i,\sigma_i): i=1,2,\ldots,h \}$ is a path, i.e., $g_i=k_i=1$ for all $1\le i<h$. We apply the fundamental recursion for $a_{k}(\ell(\T)),$ rooted at the topmost horizontal edge $e=e_{\T}$. Since the edge $e$ in $\ell(\T)$ has multiplicity one, it follows that the splitting terms do not contribute. As positive deformation increases the area,  the positive deformation term can be ignored as well. It only remains to deal with the negative deformation term. One can check that out of the two negative deformations, one (the one with the plaquette above $e$) increases area and hence can be ignored. For the other negative deformation, one gets the loop $\ell(\T')$ of area $k-1$ where  $\T'$ is the path obtained from $\T$ by deleting the leaf at the topmost level. It follows that $a_{k}(\ell(\T))=a_{k-1}(\ell(\T'))$ and we are done by induction (the base case $h=1,$ is trivial).

Now suppose $\T$ is not a path. Applying the fundamental recursion for $a_{k}(\ell(\T))$ rooted at the topmost horizontal edge $e$, as before, notice that only positive splitting and negative deformations are allowed. Any splitting, where one of the parts (as in the proof of Lemma \ref{lem2}) is not a path contributes zero by induction. Now suppose $\T$ has more than one vertex in any level except the topmost one, or has more than two vertices in the topmost level i.e., either $g_i>1$ for some $i<h$ or $g_h>2.$  In this case both negative deformation and splitting at $e$ leads to a loop of strictly smaller area, which corresponds to a tree that is not a path. At this point we are done by induction. 
Thus the  only remaining case is where $\T$ has one vertex in every level other than the topmost level and has two vertices at the topmost level; as in Figure \ref{f:tree} $ii.$.
The proof is now complete by noticing that in this case both the splitting term and the negative deformation term are $1$ (using the first part of this lemma) with opposite signs. 
\end{proof}
We are now ready to finish the proof of Proposition \ref{pla1}. We in fact prove the following stronger statement. 
Throughout the proof of the next lemma all the loops we will encounter will have a tree representation and hence we will use the two notions interchangeably as there would be no scope of confusion. 

For a loop sequence,  $s=(\T_1, \T_2,\ldots , \T_j)$, define ${\rm{area}}(s)=\sum_{i=1}^j{\rm{area}}(\T_i).$  
\begin{lem}
\label{aux2}
For any $s$ as above, $a_k(s)=0,$ whenever $k> {\rm{area}}(s).$ 
\end{lem}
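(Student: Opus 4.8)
The plan is to prove the statement by induction on $\mathrm{area}(s)$ together with the structure of the fundamental recursion, mirroring the inductive scheme already used in Lemmas \ref{lem2} and \ref{aux1}. We fix a loop sequence $s=(\T_1,\dots,\T_j)$ and apply the fundamental recursion to $a_k(s)$ rooted at the topmost horizontal edge $e=e_{\T_1}$ of the first loop $\ell(\T_1)$. By Lemma \ref{closed} every loop produced from $\ell(\T_1)$ by splitting or deformation at $e_{\T_1}$ is again of the form $\ell(\T')_m$ for some $\T'\in\mathfrak{T}$, so each term on the right-hand side of \eqref{rec10} is of the form $a_k$ or $a_{k-1}$ of a loop sequence that is again tree-representable, and the induction stays within the class under consideration. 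The key point is that, since $e$ appears in $\ell(\T_1)$ with only one orientation, the negative splitting terms vanish; the remaining terms are: positive splitting of $\ell(\T_1)$ (producing $a_k$ of a sequence with one more loop but the same total area, since positive splitting preserves area), positive deformation (producing $a_{k-1}$ of a sequence of area exactly $\mathrm{area}(s)+1$), and negative deformation (producing $a_{k-1}$ of a sequence of area $\mathrm{area}(s)-1$ or $\mathrm{area}(s)+1$, depending on which of the two adjacent plaquettes is used).

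For the inductive step I would treat the three surviving families of terms separately. For the positive splitting terms: splitting $\ell(\T_1)$ at two appearances of $e$ yields a pair $(\ell(\T_1'),\ell(\T_1''))$ with $\mathrm{area}(\ell(\T_1'))+\mathrm{area}(\ell(\T_1''))=\mathrm{area}(\ell(\T_1))$, as established in the proof of Lemma \ref{lem2}; hence the resulting sequence $s'=(\T_1',\T_1'',\T_2,\dots,\T_j)$ satisfies $\mathrm{area}(s')=\mathrm{area}(s)$, and since $k>\mathrm{area}(s)=\mathrm{area}(s')$ the induction hypothesis (on the number of loops, or equivalently a secondary induction, since the area is unchanged) gives $a_k(s')=0$. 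For the positive and negative deformation terms that increase area, the new sequence has area $\mathrm{area}(s)+1$, so $a_{k-1}$ of it vanishes by the induction hypothesis precisely when $k-1>\mathrm{area}(s)+1$, i.e. $k>\mathrm{area}(s)+2$; to handle the borderline cases $k=\mathrm{area}(s)+1$ and $k=\mathrm{area}(s)+2$ I would invoke Lemma \ref{min1} (equivalently, the fact that $a_j(\gamma)=0$ for $j<\mathrm{area}(\gamma)$, extended to sequences via Lemma \ref{l:coefffactor} / Theorem \ref{t:factor}), which already forces these coefficients to vanish since $\mathrm{area}$ of the deformed sequence exceeds $k-1$. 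For the area-decreasing negative deformation term, the new sequence has area $\mathrm{area}(s)-1$ and the relevant coefficient is $a_{k-1}$; since $k>\mathrm{area}(s)$ implies $k-1>\mathrm{area}(s)-1$, the induction hypothesis on area applies and this term vanishes as well. Summing up, every term on the right-hand side of the recursion is zero, whence $a_k(s)=0$.

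The main obstacle I anticipate is bookkeeping the two-parameter induction cleanly: positive splitting does not decrease the total area, only redistributes it among more loops, so the induction cannot be on area alone. I would resolve this by inducting on the pair $(\mathrm{area}(s), \#\{\text{loops in } s\})$ in lexicographic order, or more simply by first reducing via Lemma \ref{l:coefffactor} and the factorization $a_{k_1+k_2}(\ell_1,\ell_2)=a_{k_1}(\ell_1)a_{k_2}(\ell_2)$ to the case of a single loop, and then noting that for a single loop $\ell(\T)$ one has $a_k(\ell(\T))=0$ for $k>\mathrm{area}(\ell(\T))$ directly: apply the recursion rooted at $e_{\T}$, observe each child loop/sequence has total area $\le \mathrm{area}(\ell(\T))+1$ with the area-decreasing negative deformation giving area $\mathrm{area}(\ell(\T))-1$, and close the induction. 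A secondary subtlety is making sure Lemma \ref{closed} genuinely applies to $\ell(\T_1)$ even when $\T_1$ is a wrap-around (height-one) tree and when splitting throws off a trivial wrap-around component — but this is exactly the content of the last sentence of the proof of Lemma \ref{closed}, so it can be quoted directly.
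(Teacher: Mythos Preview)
Your scheme has a genuine gap at the case $k=\mathrm{area}(s)+2$. You claim that for the area-increasing deformation terms, ``the area of the deformed sequence exceeds $k-1$'' in the borderline cases, so Lemma~\ref{min1} forces them to vanish. But when $k=\mathrm{area}(s)+2$ the deformed sequence has area exactly $\mathrm{area}(s)+1=k-1$, not strictly larger, so Lemma~\ref{min1} says nothing. And indeed these terms are \emph{not} individually zero: if $\T_1$ is a path, then the negative deformation with the plaquette $p_+(e)$ above the top edge produces another path $\tilde\T_1$ of area $\mathrm{area}(\T_1)+1$, and Lemma~\ref{aux1} gives $a_{\mathrm{area}(\tilde\T_1)}(\tilde\T_1)=1$. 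So the right-hand side of the recursion contains genuinely nonzero summands that you have not accounted for. The same difficulty propagates to all $k>\mathrm{area}(s)+2$: after one application of the recursion the area-increasing branch hands you an instance of the lemma with \emph{larger} area, so an induction on area alone cannot close.

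What is actually needed, and what the paper does, is a cancellation rather than a term-by-term vanishing. One inducts on $\alpha=k-\mathrm{area}(s)$; the case $\alpha=1$ is a parity count (every area-increasing deformation forces an extra deformation later, so no trajectory can use exactly $\mathrm{area}(s)+1$ of them). For $\alpha=2$ one follows any trajectory to the first time $\tau$ an area-increasing deformation is applied, and at that moment there are three such deformations of $\T_1$: positive with $p_+(e)$, negative with $p_+(e)$, and positive with $p_-(e)$. The first two produce trees satisfying the hypothesis of Lemma~\ref{lem2}, hence give equal $a_{\mathrm{area}+1}$ and cancel because they enter the recursion with opposite signs; the third produces a tree that is never a path (the top level has multiplicity at least two), so Lemma~\ref{aux1} kills it. This cancellation is the missing idea in your plan; once it is in place, the general even $\alpha$ follows by reducing $\alpha$ by two at the first area-increasing step, and odd $\alpha$ by the parity argument.
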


Clearly taking $s$ to be the tree with a single edge (a single leaf connected to the root) implies Proposition \ref{pla1}.

\begin{proof}
The proof will follow from induction on $\alpha:=k-{\rm{area}}(s),$ (we suppress the dependence on $s$ in the notation as it will be clear from context). We shall establish the claim separately for $\alpha=1$ and $\alpha=2$. The argument for $\alpha=1$, which is essentially a parity argument generalizes easily for all odd $\alpha$; and the proof for general even $\alpha$ follows by induction.

\textbf{Case} $\mathbf{\alpha=1:}$
For convenience let ${\rm area}(s)=A$. The proof in this case goes along the following steps:

\begin{itemize}
\item By Lemma \ref{min1},  there are at least $A$ deformations needed to reach the null loop from $s$. 
\item By Lemma \ref{closed}, all the components formed by positive splitting (negative splitting never occurs) have a similar tree representation. Moreover, by \eqref{obs1}, whenever there is a positive splitting, the areas of the loops obtained by the positive splitting add up to the area of the loop which was split.

\item We apply the recursion in \eqref{rec10} repeatedly and this produces a virtual decision tree for the possible moves and at each node of this tree we have a loop sequence, (a geodesic/trajectory of this tree represents a path from a loop sequence ending with the null loop sequence.).  

\item Now consider any such trajectory starting from $s$ and finishing at the null loop. We shall show that this sequence contributes $0$ to $a_{A+1}(s)$ thereby proving this step. Observe also that for a trajectory to contribute to $a_{A+1}(s)$ it must contain exactly $A+1$ deformation steps.

\item  Consider for any loop sequence in such a trajectory, the possible deformations on any component loop/tree, and the effect it has on the area. By choice, at any point in time,  for any tree, we always use the recursion rooted at the topmost edge (say $e$). Let ${{p}_{+}}(e)$ and ${{p}_{-}}(e)$ be the two clockwise oriented plaquettes adjacent to $e$, (above and below $e$ respectively). 

It is easy to check that each of the four choices of positive and negative deformation with ${{p}_{+}}(e)$ and ${{p}_{-}}(e)$ changes the area by one. In particular,  only the negative deformation with ${{p}_{-}}(e),$ causes an area decrease by one and the other three increases the area by one.  

\item Now for any given trajectory, let $\tau$ be the first time, where an area increasing deformation is applied (there must be such a step, for the trajectory to contribute to $a_{A+1}(s)$). Let $A_1$ be the total area of the loop sequence at time $\tau-1$. Then $A-A_1$ is equal to the number of deformations (all of them are area decreasing) till then. Let  $(\T_1,\T_2,\ldots, \T_m)$ (see Remark \ref{prod}) be the loop sequence in this trajectory at time $\tau-1$. The  deformation at $\tau$ is applied on one of the component trees (say $\T_1$). At that point, the area  increases by one and hence the minimum number of deformations needed to reduce it to the null string also increases by one. Thus the total number of deformations needed to reduce the loop sequence to the null sequence, is at least $A_1+2$.  Hence the trajectory must contain at least $A+2$ deformation steps in total, which implies that it contributes $0$ to $a_{A+1}(s)$.
\end{itemize}

Essentially the same argument takes care of all odd values of $\alpha$, we omit the details. The next step is to treat the case $\alpha=2$.

\textbf{Case} $\mathbf{\alpha=2:}$ Lets us use the same notations as in the $\alpha=1$ case. For a vanishing trajectory of loop sequences starting from $s,$ let $\tau$ and $(\T_1,\T_2,\ldots, \T_m)$ be as before. We shall show that all trajectories which has the representation $(\T_1,\T_2,\ldots, \T_m)$ at time $\tau-1,$ combined, contribute $0$ to $a_{A+2}(s)$.  Since the next loop operation can be performed on any of the component trees we will analyze only the trajectories that perform the operation on $\T_1$ at time $\tau.$ This suffices since we are working with an arbitrary labeling of the trees.  As before, $e_{\T_1}$ is the top most edge of $\ell(\T_1)$, and let ${{p}_{+}}(e)$ and ${{p}_{-}}(e)$ denote  the two clockwise oriented plaquettes above and below $e$.) The proof will follow from the next two claims.
\begin{itemize}
\item $a_{{\rm area}(\T_1)+1}(\hat \T_1)=a_{{\rm area}(\T_1)+1}(\tilde \T_1)$
where $\hat \T_1$ and $\tilde \T_1$ are the trees obtained by positive  and negative deformations of the tree $\T_1$ with $ {{p}_{+}}(e)$ respectively. 
\item  $a_{{\rm area}(\T_1)+1}(\T'_1)=0$ where $\T'_1$  is the tree obtained by positive deformations of a tree $\T_1$ with $ {{p}_{-}}(e).$ 
\end{itemize} 

To see why these two claims suffice, note that  at time $\tau$ the possible loop sequences are $s_1=(\hat \T_1, \T_2,\ldots, \T_m)$, $s_2:=(\tilde \T_1, \T_2,\ldots, \T_m)$ and $s_3:=(\T'_1, \T_2,\ldots \T_m)$ all of which have the same area, (say $t$). 
Also up to $\tau$, the number of deformations made is equal to $k-\sum_{i=1}^{m}{\rm{area}}(\T_i)+1$ (the additional one is due to the positive deformation at $\tau$).  Since the number of deformations needed to reduce any of the above three sequences to the null string is at least $t,$ it follows from \eqref{rec10},  that the total contribution to $a_{A+2}$ is a  multiple of $a_{t}(s_1)-a_{t}(s_2)+a_{t}(s_3).$ 
Since except for the first component, all the other loops in $s_1,s_2$ and $s_3$ are exactly the same, the result follows from the above two claims. 

The first claim follows from Lemma \ref{lem2} after observing that $\hat \T_1$ and $\tilde \T_1$ satisfy the hypothesis of that lemma, where the orientation of the plaquettes only in the highest level occur with opposite signs. The second claim follows from Lemma \ref{aux1} and the observation that $\T_1'$ is not a path since it has at least two plaquettes at the highest level (because of the positive deformation). Thus the proof for $\alpha=2$ is complete.
\begin{figure}[h]
\centering
\includegraphics[width=.65\textwidth]{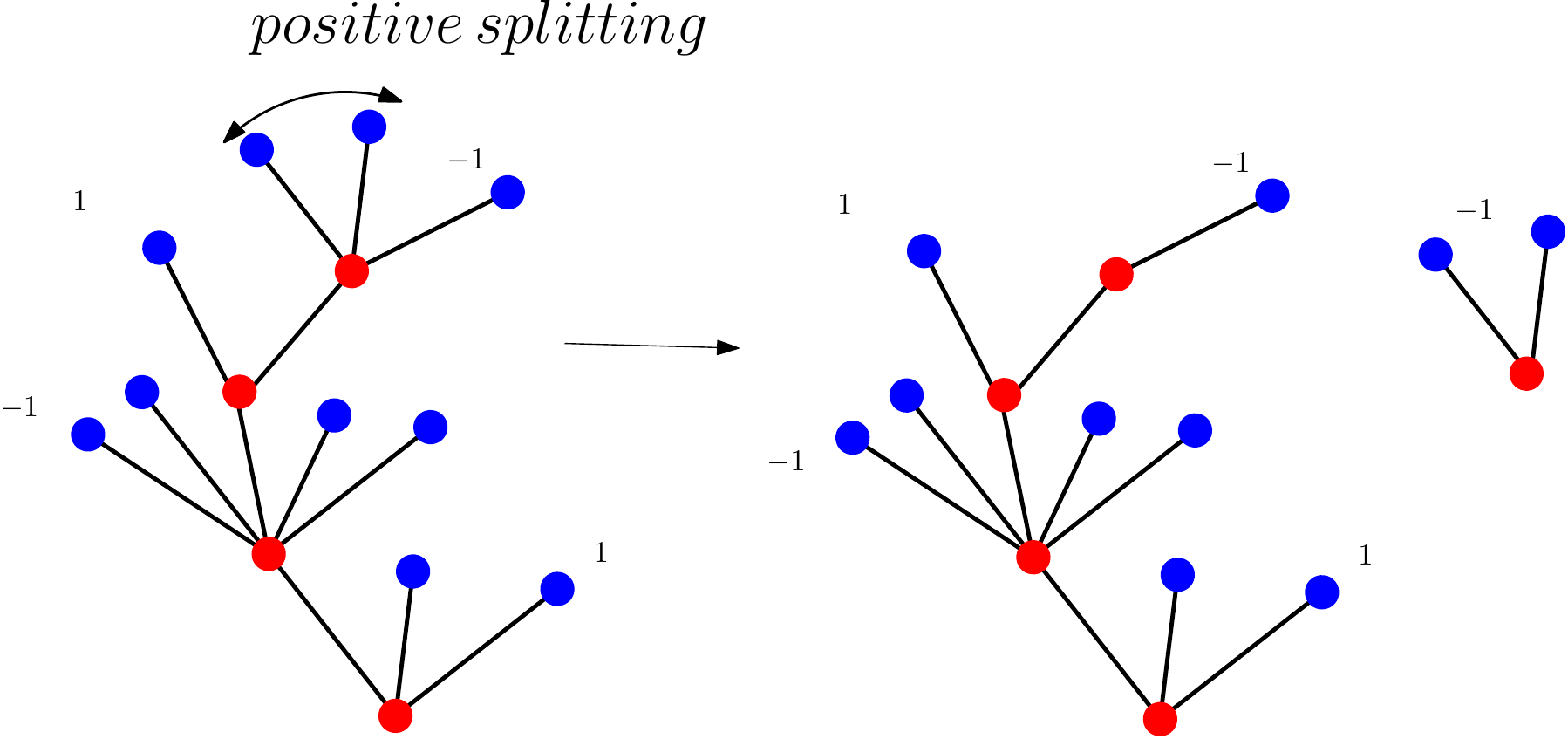}
\caption{Illustrates the formation of various trees by applying the fundamental recursion rooted at the top most edge.}
\label{string1}
\end{figure}

The proof for general $\alpha$ now follows by first applying the recursion repeatedly for any loop sequence till $\alpha$ decreases (an area increasing deformation is applied) for the first time. At that point $\alpha$ decreases by two and we are done by induction.

\end{proof}

We are now ready to finish the proof of Theorem \ref{pla100}.

\begin{proof}[Proof of Theorem \ref{pla100}]
The case $k=1$ has already been proved in Proposition \ref{pla1}. For $k>1$, let $\ell$ denote the loop obtained by wrapping the plaquette $p$ around $k$ times. Notice that $\ell=\ell(\T)$ where $\T=(k,0,1)$. Since $\T$ is not a path, it follows from Lemma \ref{aux1} and Lemma \ref{aux2} that the all the coefficients of $\ell$ are $0$ and hence $w(\ell,\beta)=0.$ 
\end{proof}

 \section{Gauge Fixing in two dimensions}\label{gf}
Often in various Hamiltonians like the one in \eqref{e:gibbs}, one has the freedom of forcing the value on certain edges. We will use it to great advantage in the planar setting. Although we shall use different variants of this technique (called gauge fixing), we start by describing a more classical variant, known as \emph{axial gauge fixing}, we will force the matrices on the vertical edges to be identity. The set of vertical edges in $E_{\Lambda} $ will be denoted by $V_{\Lambda}$  and similarly the corresponding horizontal edges will be denoted by $H_{\Lambda}$ \footnote{All our edge sets unless specifically oriented, should be thought of as containing two copies of each edge oriented in the two possible directions.}. 

Throughout this section we shall work with $\Lambda:=[-n,n]^2$ for some $n$. In this case, we let $V_{n}:= V_{\Lambda}$ (resp.\ $H_n:=H_{\Lambda}$). For any matrix $A$,  $\Tr(A)$ and $\tr(A)$ will denote the usual and normalized traces respectively. Let $O(N)$ denote the orthogonal group of order $N$, that is the group of all $N\times N$ orthogonal matrices with real entries. Recall that $\cQ$ is the space of all configurations. Let $G(\Lambda)$ denote the set of all maps from the vertices in $\Lambda$ to $O(N)$. Given $G\in G(\Lambda),$ it acts on $\cQ$ in the following natural way: for any $Q\in \cQ$,  for all neighbouring $x,y \in \Lambda,$ let us denote $Q_{e}$ by $Q(x,y)$ where $e$ is the directed edge which starts at $x$ and ends $y$ and define $GQ\in \cQ$ by
$$GQ (x,y)=G(x)Q(x,y)G^{-1}(y).$$
Note that since $SO(N)$ is a normal subgroup of $O(N),$ $GQ (x,y) \in SO(N)$ for all $(x,y).$ 
\begin{figure}[h]
\centering
\includegraphics[width=.9\textwidth]{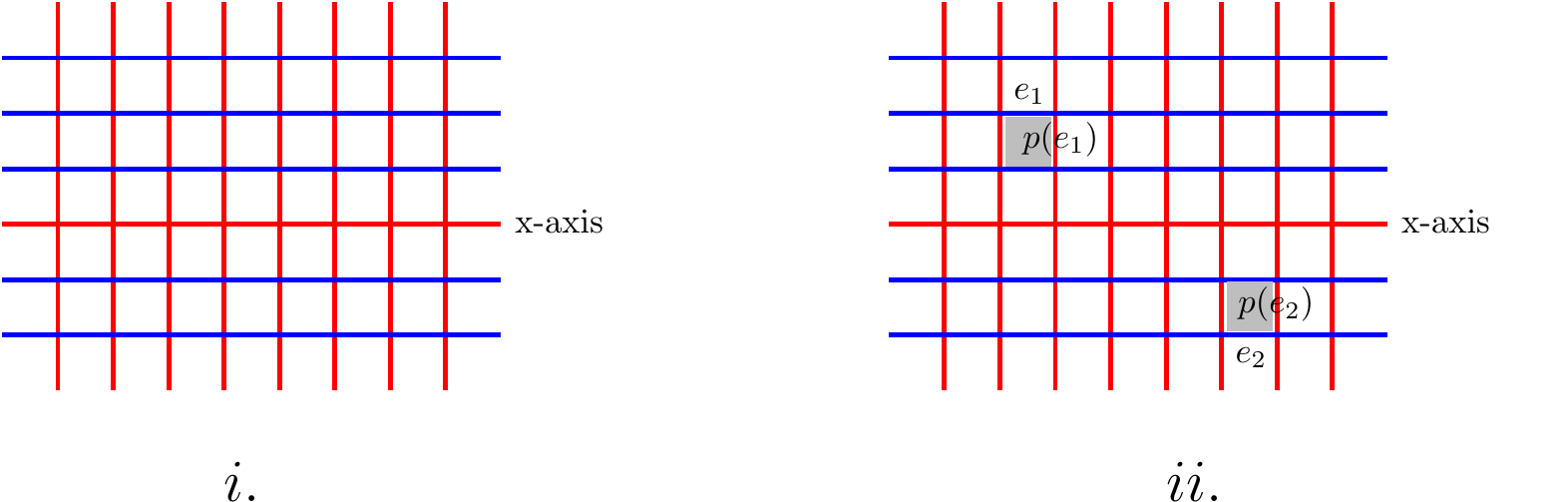}
\caption{$i.$ The red edges are forced to be identity by our choice of the gauge. $ii.$  The plaquettes associated to the edges after gauge fixing.}
\label{f:epe}
\end{figure}

One crucial property of the above action is that it keeps the statistics depending on plaquettes invariant. 
\begin{lem} \label{inv1}The value of  any function $f(Q)$ depending only on the variables $\{\Tr(Q_p)\}_{p\in \cP^{+}_{\Lambda}}$, $f(Q)=f(GQ)$ for any $G\in G(\Lambda)$. More generally, for a loop $\ell=e_1e_2\cdots e_k$ with all edges contained in $\Lambda$ we have $\Tr(Q_{e_1}\cdots Q_{e_k})= \Tr (GQ_{e_1}\cdots GQ_{e_{k}})$.
\end{lem} 

\begin{proof} For any $G\in G(\Lambda), Q\in \cQ,$ and $p=e_1e_2e_3e_4\in \cP^{+}$ we have
$$\Tr((GQ)_p)=\Tr(G(x)Q_{e_1}Q_{e_2}Q_{e_3}Q_{e_4}G(x)^{-1})=\Tr(Q_p)$$
where $x$ is the starting vertex of $e_1$ (and the ending vertex of $e_4$). The result for a general loop $\ell$ follows by an identical argument.
\end{proof}

Now given any $Q\in \cQ,$ we will fix a gauge $G_Q\in G(\Lambda),$ which forces all $(G_QQ)_e=\mathbb{I}$ (the identity matrix) for all $e\in V_n$ as well as on all the edges on the $x$-axis; see Figure \ref{f:epe}. The construction is inductive:\\

$\bullet$ For $x=(0,0)$ define $G_{Q}(x):=S$ where $S$ is any fixed matrix on $O(N)$.\\

$\bullet$ For all $x:=(x_1,0)$ with $x_1>0$, define $G_Q(x):=G_{Q}(0,0)\prod _{i=0}^{x_1-1} Q((i,0), (i+1,0)),$\footnote{The product notation needs a remark since we are in the non-commutative setting. Throughout this article any expression of the form $\prod_{i=k}^\ell A_i$ means $A_kA_{k+1}\ldots A_{\ell}$.}\\

$\bullet$ For all $x:=(x_1,0)$ with $x_1<0$, define $G_Q(x):=G_{Q}(0,0)\prod _{i=0}^{x_1+1} Q((i,0), (i-1,0))$.\\

$\bullet$ For all $x=(x_1,x_2)$ with $x_2>0$, define $G_Q(x):=G_{Q}(x_1,0)\prod_{j=0}^{x_2-1}Q((x_1,j),(x_1,j+1))$.\\

$\bullet$ For all $x=(x_1,x_2)$ with $x_2<0,$ define $G_Q(x):=G_{Q}(x_1,0)\prod_{j=0}^{x_2+1}Q((x_1,j),(x_1,j-1)).$\\

Clearly, this forces $G_QQ$ to be identity on $V_n,$ as well as on all the edges on the $x$-axis; call this set of edges $V'_n$ and the remaining edges $H'_n$. Let $Q(V'_n)$ (resp.\ $Q(H'_n)$) denote the configuration $Q$ restricted to the edges of $V'_n$ (resp.\ $Q$ restricted to the edges of $H'_n$). Let $\cQ_{V'_n}$ be the space of configurations of matrices in $SO(N)$ indexed by edges in $V'_n$ and similarly $\cQ_{H'_n}$, $\cQ_{\cP^+_\Lambda}$ are defined to be the space of configurations of matrices in $SO(N)$ indexed by edges in $H'_n$ and plaquettes in $\cP^+_\Lambda,$ respectively. The next lemma is a consequence of Lemma \ref{inv1}.

\begin{lem}
\label{conditional} 
$\{(G_Q Q)_{e}\}_{e\in H_n'}$ has the density proportional to
$$ \exp \biggl( N\beta \sum_{p\in \mathcal{P}_{\Lambda}^{+}} {\rm Tr}(Q_{p})\biggr)$$
with respect to the the measure which is the product of Haar measure on $SO(N)$ on the edges of $H'_n$, and degenerate at identity on the edges in $V_n'.$ 
\end{lem}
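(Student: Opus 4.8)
The plan is to combine the change-of-variables / pushforward structure of the gauge action with the invariance recorded in Lemma \ref{inv1}. First I would make precise that the map $Q\mapsto G_Q Q$ is, up to the degenerate coordinates, a measurable bijection of the configuration space that pushes the relevant measure forward correctly. Concretely, write $\cQ = \cQ_{V_n}\times \cQ_{H_n}$ (splitting a configuration into its vertical and horizontal edge coordinates), and observe that the original Gibbs measure $\mu_{\Lambda,N,\beta}$ is the product Haar measure $\sigma_{\Lambda,N}$ weighted by the density $Z^{-1}\exp(N\beta\sum_{p}\Tr(Q_p))$. The key structural point is that for each fixed assignment of the horizontal variables $Q(H_n)$, the vertical variables $Q(V_n)$ enter the construction of $G_Q$ and of $G_QQ$ only through left/right multiplications; and crucially, by construction $G_QQ$ is \emph{identically} the identity on all edges of $V_n'$. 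So the ``new'' vertical coordinates carry no information, and all the randomness is pushed into the new horizontal coordinates $\{(G_QQ)_e\}_{e\in H_n'}$.

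Next I would identify the law of $\{(G_QQ)_e\}_{e\in H_n'}$. Two ingredients are needed. (i) Invariance of the weight: by Lemma \ref{inv1}, $\sum_{p}\Tr((G_QQ)_p)=\sum_p \Tr(Q_p)$, so the exponential weight is the same function when expressed in terms of the gauge-fixed configuration; moreover every plaquette trace $\Tr(Q_p)$ can be written as a function of $\{(G_QQ)_e\}_{e\in H_n'}$ alone, since the vertical gauge-fixed matrices are all $\mathbb{I}$ (see Figure \ref{f:epe}). (ii) Invariance of Haar measure: for \emph{fixed} values of the new vertical coordinates (namely all $=\mathbb{I}$), the map sending the old horizontal coordinates $Q(H_n)$ to the new ones $(G_QQ)(H_n')$ is, edge by edge, of the form $Q_e\mapsto A\,Q_e\,B$ with $A,B$ depending only on variables ``below and to the side'' in the inductive construction — hence, conditionally, a left-and-right translation on $SO(N)$ (or $O(N)$, but bi-translation by $O(N)$ elements preserves Haar measure on $SO(N)$ as well, since $SO(N)$ is the identity component and translation by a fixed group element is measure preserving on the coset it lands in; here it lands back in $SO(N)$ by normality, as noted after the definition of the action). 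Therefore the product Haar measure on $Q(H_n)$ pushes forward to the product Haar measure on $\{(G_QQ)_e\}_{e\in H_n'}$, while the $V_n'$ coordinates are deterministically collapsed to $\mathbb{I}$. Assembling (i) and (ii): the pushforward of $\mu_{\Lambda,N,\beta}$ under $Q\mapsto G_QQ$ has, with respect to [Haar on $H_n'$] $\otimes$ [point mass at $\mathbb{I}$ on $V_n'$], a density proportional to $\exp(N\beta\sum_{p}\Tr(Q_p))$ now read as a function of the $H_n'$ coordinates. That is exactly the claimed statement.

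The step I expect to be the main obstacle — or at least the one requiring the most care — is verifying cleanly that the transformation $Q\mapsto G_QQ$ restricted to the horizontal edges is, coordinatewise and after conditioning on (the trivial) vertical data, a genuine bi-invariant translation, so that Haar is preserved. This means checking that in the inductive definition of $G_Q$, the matrix placed at a vertex $x$ depends only on horizontal edge variables that are ``already fixed'' relative to the edge being transformed, so that when we transform a given horizontal edge $e=((x_1,j),(x_1+1,j))$ we multiply on the left by $G_Q(x_1,j)$ and on the right by $G_Q(x_1+1,j)^{-1}$, and these two factors are (for the purposes of the joint law) independent of $Q_e$ itself. One must also double-check the bookkeeping of which edges end up forced to $\mathbb{I}$ (all of $V_n$ together with the $x$-axis edges, i.e.\ $V_n'$) and that the remaining edges $H_n'$ are in measure-preserving bijection with a full product of copies of $SO(N)$ — in particular that no horizontal edge variable is over-determined or lost. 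Once this combinatorial/inductive verification is in place, the rest is a formal application of Lemma \ref{inv1} and the translation-invariance of Haar measure.
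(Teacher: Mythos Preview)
Your plan is correct and matches the paper's approach: use Lemma \ref{inv1} to see that the Hamiltonian is unchanged under the gauge action and depends only on the $H'_n$-coordinates, and use left/right translation invariance of Haar measure to handle the reference measure. The paper packages your ``obstacle'' more cleanly by writing down the bijection $\Psi_1(Q)=(\{Q_e\}_{e\in V'_n},\{(G_QQ)_e\}_{e\in H'_n})$ and conditioning on the \emph{original} $V'_n$-variables (not on the new vertical coordinates, which are deterministically $\mathbb{I}$ and give a vacuous conditioning); since $V'_n$ includes the $x$-axis edges, $G_Q$ is a function of $Q(V'_n)$ alone, so once $Q(V'_n)$ is fixed the map on $H'_n$ is a genuine edge-by-edge bi-translation by fixed group elements, and the Jacobian-$1$ claim is immediate.
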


\begin{proof}
Consider first the bijective change of variables, $$\Psi_1:\cQ \to (\cQ_{V'_n},\cQ_{H'_n}),$$
given by $\Psi_1(Q)=(\{Q_{e}\}_{e\in V_n'}, \{(G_Q Q)_{e}\}_{e\in H_n'})$. Suppose for the moment that $Q$ is a configuration picked from the product Haar measure on the edges on $\Lambda$, i.e, $\sigma_{\Lambda, N}$ (see \eqref{prodhaar}). Notice that just by a simple conditioning and using the invariance of Haar measure under left and right multiplication, the distribution of $G_QQ$ given $Q(V'_n)$ is again product Haar measure on the edges of $H'_n$. Thus one concludes that the Jacobian for this change of variable is one.

Observe now that from Lemma \ref{inv1} it follows that $\sum_{p\in \mathcal{P}_{\Lambda}^{+}} {\rm Tr}(Q_{p})$ is a function only of the configuration $G_{Q}Q$ and hence only of $\{(G_Q Q)_{e}\}_{e\in H_n'}$ as $G_{Q}Q$ is identity on the other edges. Going back to the scenario where $Q$ is drawn from the measure $\mu_{\Lambda, N,\beta}$ it follows from the above observation that the joint density of $\Psi_1(Q)$ factorises along the two components, immediately yielding the result of the lemma.
\end{proof}

Let $\tilde \cQ$ denote the space of configurations of matrices from $SO(N)$ on $E_{\Lambda}^{+}$ such that all the matrices on the edges of $V'_n$ are identity. Let $\tilde Q=\{\tilde Q_{e}\}_{e\in E_{\Lambda}^{+}} \in \tilde \cQ$ be distributed according to the measure $\tilde{\mu}_{\Lambda, N, \beta}$ whose density is proportional to
$$ \exp \biggl( N\beta \sum_{p\in \mathcal{P}_{\Lambda}^{+}} {\rm Tr}(\tilde Q_{p})\biggr),$$
with respect to the product Haar measure on the edges of $H'_n$. 
Note that $\cQ_{H'_n}$ can be naturally identified with $\tilde \cQ.$ Here $\tilde Q_{p}$ denotes, as before, the product of the matrices along the edges of the plaquette $p$. The following lemma is an easy consequence of the coupling described above and Lemma \ref{inv1}.

\begin{lem}
\label{qq'equiv}
For any loop $\ell=e_1\cdots e_k$ contained in $\Lambda,$ 
$$\E_{\mu_{\Lambda, N, \beta}} \Tr (Q_{e_1}\cdots Q_{e_k})=\E_{\tilde \mu_{\Lambda, N, \beta}} \Tr (\tilde Q_{e_1}\cdots \tilde Q_{e_k}).$$
\end{lem} 

\begin{lem}
\label{l:pfree}
Let $\tilde Q$ be a configuration with law $\tilde \mu_{\Lambda, N, \beta}$. For $p\in \mathcal{P}_{\Lambda}^{+}$, all the variables $ \tilde Q_p$ are independent with density with respect to Haar measure on $SO(N),$ being proportional to $\exp(\Tr(\tilde Q_p)).$
\end{lem}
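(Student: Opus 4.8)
Let me understand the setup. After axial gauge fixing, we have a configuration $\tilde Q$ on $E_\Lambda^+$ where all vertical edges and edges on the $x$-axis are set to identity. The remaining free edges are in $H'_n$. The claim is that the plaquette variables $\tilde Q_p$ for $p \in \mathcal{P}_\Lambda^+$ are independent, each with density proportional to $\exp(\mathrm{Tr}(\tilde Q_p))$ with respect to Haar measure on $SO(N)$.

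Key geometric observation: after gauge fixing, each plaquette $p$ has a "designated" horizontal edge $e(p)$ — the edge such that setting $Q_{e(p)}$ freely, with all other edges of $p$ either being identity or designated to other plaquettes in a consistent way... Actually, the standard picture (see Figure \ref{f:epe}): in the gauge where vertical edges and the $x$-axis row are identity, each plaquette variable $\tilde Q_p$ reduces to a product involving essentially the top horizontal edge minus the bottom horizontal edge of the plaquette. More precisely, for a plaquette $p$ with bottom-left corner $(x_1, x_2)$, $\tilde Q_p = Q_{\text{bottom}} \cdot Q_{\text{right}}^{\pm} \cdot Q_{\text{top}}^{-1} \cdot Q_{\text{left}}^{\pm}$, and since the verticals are identity, $\tilde Q_p = Q_{\text{bottom}} Q_{\text{top}}^{-1}$ (up to orientation conventions). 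Then the map from the free horizontal edges $\{Q_e\}_{e \in H'_n}$ to the plaquette variables is a bijection onto a product of copies of $SO(N)$, with Jacobian one by invariance of Haar measure.

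**Steps in order.** First, I would set up the bijective change of variables $\Psi_2 : \cQ_{H'_n} \to \cQ_{\cP^+_\Lambda}$ sending $\{\tilde Q_e\}_{e \in H'_n}$ to $\{\tilde Q_p\}_{p \in \mathcal{P}^+_\Lambda}$. The map should be constructed inductively, ordering the plaquettes (say, row by row from the $x$-axis outward, and within a row left to right): each new plaquette introduces exactly one new free horizontal edge (its top edge, in the upper half-plane; bottom edge in the lower half-plane), so one can solve for that edge variable in terms of the plaquette variable and previously-determined edge variables. Second, I would check this is a bijection and that it pushes the product Haar measure on $H'_n$ to the product Haar measure on $\cP^+_\Lambda$: this follows because at each inductive step the new edge variable is obtained from the new plaquette variable by left/right multiplication by a fixed (already-determined) matrix, and Haar measure on $SO(N)$ is invariant under such multiplications — so the Jacobian is one, exactly as in the proof of Lemma \ref{conditional}. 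Third, I would observe via Lemma \ref{inv1} that $\sum_{p} \mathrm{Tr}(\tilde Q_p)$ depends on $\tilde Q$ only through $\{\tilde Q_p\}_{p \in \mathcal{P}^+_\Lambda}$, so under $\Psi_2$ the density of $\tilde\mu_{\Lambda,N,\beta}$ becomes proportional to $\exp(N\beta \sum_p \mathrm{Tr}(\tilde Q_p))$ times the product Haar measure on $\cP^+_\Lambda$. Since this density factorizes as $\prod_p \exp(N\beta\, \mathrm{Tr}(\tilde Q_p))$, the plaquette variables are independent with the claimed individual density proportional to $\exp(N\beta\,\mathrm{Tr}(\tilde Q_p))$ with respect to Haar measure on $SO(N)$.

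**Main obstacle.** The genuinely delicate point is verifying that the change of variables $\Psi_2$ is a bijection onto the full product $SO(N)^{\mathcal{P}^+_\Lambda}$ with the correct Jacobian; concretely, one must fix an ordering of plaquettes so that each plaquette contributes exactly one "new" horizontal edge not belonging to any earlier plaquette or to $V'_n$, and then argue the triangular structure of the resulting system of equations makes it invertible with each elementary step being a Haar-preserving translation. This is a careful but routine combinatorial/topological bookkeeping argument about the cell structure of $[-n,n]^2$ (it is essentially the statement that the top-row horizontal edges together with one horizontal edge per interior plaquette form a "spanning tree"–like complement in the gauge-fixed complex); I would handle it by the explicit inductive ordering described above rather than invoking abstract cohomology. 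I'd also note the stated density in the lemma writes "$\exp(\mathrm{Tr}(\tilde Q_p))$" whereas the Hamiltonian carries a factor $N\beta$; I would keep the $N\beta$ factor explicit (the lemma statement is presumably suppressing it or has absorbed constants), and the proof goes through verbatim with that factor present.
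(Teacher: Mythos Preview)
Your proposal is correct and follows essentially the same route as the paper: define the bijective change of variables $\Psi_2$ from the free horizontal edge variables $\{\tilde Q_e\}_{e\in H'_n}$ to the plaquette variables $\{\tilde Q_p\}_{p\in\cP^+_\Lambda}$ via the row-by-row triangular structure (the paper writes the explicit inverse $\tilde Q_{e_{i,j}}=\prod_{k=1}^{j}\tilde Q_{p(e_{i,k})}$), argue the Jacobian is one by left-invariance of Haar measure, and then observe the density factorizes over plaquettes. Your remark about the missing $N\beta$ factor in the stated density is also correct.
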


\begin{proof} By definition $\tilde Q_e=\bI$ for  all $e\in V'_n$. Observe that each of the edges $e$ in $H'_n$ can be uniquely associated with a plaquette $p(e)$ as follows. For an edge $e$ above the $x$-axis, define $p(e)$ to be the the plaquette whose top edge is $e$. For an edge $e$ below the $x$-axis, define $p(e)$ to be the plaquette who bottom edge is $e$; see Figure \ref{f:epe}.
Recall previously since we were only concerned about traces, we chose not to well define the representation of a plaquette i.e. $p=e_1e_2e_3e_4$ was considered to be the same as $e_2e_3e_4e_1$. However in the sequel choosing the representation will be important and we fix the unique representation so that \eqref{inversion1} is true.
 
We now consider the change of variables $\Psi_2: \tilde \cQ \to \cQ_{\cP^+_\Lambda}$  given by, $$\Psi_2(\{\tilde Q_e\}_{e\in H'_n}) = \{\tilde Q_{p(e)}\}_{e\in H'_{n}}.$$

Observe that for $e_{i,j}=((i,j),(i+1,j),$  for $j>0$ we have
\begin{equation}\label{inversion1}
\tilde Q_{e_{i,j}}=\prod_{k=1}^{j}\tilde Q_{p(e_{i,k})}.
\end{equation}
A similar expression holds when $j<0$. Note that by the natural identification, we will also think of $\Psi_2$ as a function on  $\cQ_{H'_n}$. Using invariance of Haar measure on $SO(N)$ on left multiplication notice that if $\{\tilde Q_{p(e)}\}_{e\in H'_n}$ were in fact  distributed according to product Haar measure, then  so is $\{\tilde Q_{e}\}_{e\in H'_n}$. This implies that the Jacobian of this transformation is 1. 
Now as the density  of $\tilde \mu_{\Lambda, N, \beta}$ with respect to product Haar measure depends only on $\{\tilde Q_{p(e)}\}_{e\in H'_n}$ we are done. 
\end{proof}

\begin{remark}\label{decoupling}
From the above, we see that the bijective map $\Psi : \cQ \to (\cQ_{V'_n}, \cQ_{\cP^+_{\Lambda}})$ given by $$\Psi (Q)=(\{Q_{e}\}_{e\in V'_n}, \Psi_2 \circ \Psi_1((\{Q_{e}\}_{e\in H'_n}) ),$$ decouples the coordinates (when $Q$ is sampled from the measure $\mu_{\Lambda, N, \beta}$), where the distribution on $\cQ_{V'_n}$ is product Haar measure and the distribution on $\cQ_{\cP^+_{\Lambda}}$ is an independent product distribution with common marginal described in Lemma \ref{l:pfree}.
\end{remark}


As we only care about the loop expectations for the rest of this article, whenever we are in the planar setting, using Lemma \ref{qq'equiv} we shall only care about configuration space $\tilde \cQ$ and a configuration $\tilde Q$ drawn from it according to the measure $\tilde \mu_{\Lambda, N, \beta}$.

\section{Asymptotic Freeness of Plaquette Variables}\label{fa}
In this section we use basic tools and techniques from free probability to establish the asymptotic freeness for plaquette variables for a configuration picked from $\tilde \mu_{\Lambda, N, \beta}$. We begin by recalling a notion of convergence for matrices and the notion of a non-commutative probability space.

\subsection{Non-commutative Probability Space and Convergence of Matrices}
We start with the basic definition of convergence for a sequence of random matrices.
\begin{defn}\label{conv1} We say that a sequence ${(A_N)}_{N\in \N}$ of $N \times N$ matrices converge, if the limit $\lim_{N\to \infty} \E[\frac{1}{N}\Tr({A^k_N})]$ exists for all $k\in \N$.
\end{defn}
One of the major motivating questions which initiated the theory of free probability is the following.

\textit{Question:} For any $n\in \N,$ let $(A_n,B_n)$ denote a joint distribution on pairs of matrices of size $n\times n.$ If the marginals $A_n$ and $B_n$ converge, does ${\E[\frac{1}{n}\Tr({Z})]}$ converge for all monomials $Z$ of two symbols and moreover is there a natural way to compute the limits in terms of the marginals?

The following notion of a non-commutative probability space is the natural framework to study this question.

\begin{defn}\label{alg1} A pair $(A, \phi)$ consisting of a unital algebra\footnote{An algebra $A$ is said to be unital if it has  an unit element $1$ such that  $1.x=x.1$ for all $x\in A.$} $A$ and a linear functional $\phi : A \to  \C$ with $\phi (1) = 1$ is called a non-commutative probability space. Often the adjective ``non-commutative" is just dropped. Elements from $A$ are addressed as (non-commutative) random variables; the numbers $\phi(a_1.a_2.\ldots.a_k)$ for such random variables $a_1, \ldots , a_k \in A$ are called moments; the collection  $\{\phi(w)\}_{w\in A(a_1,a_2,\ldots a_k)}$ where $A(a_1,a_2,\ldots a_k)$ is the sub-algebra generated by $a_1,\ldots a_k$ is called the joint distribution of $a_1,\ldots,a_k$.
\end{defn}

All our linear functionals $\phi$ will be tracial i.e.,  $$\phi(ab)=\phi(ba)$$ for any $a,b\in A.$
Let us consider a sequence of non-commutative probability spaces $\sF_{N}=(\cF,\phi_N)$ on the same algebra $\cF$ with different linear functionals $\phi_N.$ \\

\noindent
\textbf{Convergence of algebras:} We say $\{\sF_{N}\}$ converges to $\sF=(\cF, \phi)$ ($\phi$ is a trace functional), if for all $z\in \cF,$
$$\lim_{N\to \infty} \phi_N (z)=\phi(z).$$ 

We now state the abstract definition of freeness even though we will be only concerned with algebras generated by certain matrices and the functional being the normalized trace operator. 
\begin{defn} Given a non-commutative probability space $(\cF,\phi),$ we say sub-algebras $$\cF_1, \cF_2, \ldots \cF_k \subset \cF$$ are jointly free if the following holds: for any monomial of the form $z=a_{i_1}a_{i_2}\ldots a_{i_m},$ where $a_{i_j}\in \cF_{i_j},$ and $\phi(a_{i_j})=0,$ and $i_j \in \{1,2,\ldots, k\}$ for $j=1,2,\ldots, m$ then  
we have $$\phi(z)=0,$$ whenever $i_j \neq i_{j+1}$ for all $j=1,2,\ldots, m-1.$ 
Often we say a collection of elements in the ambient algebra are free if the algebras generated by each of the letters are  jointly free. 
We say the probability sub-spaces $(\cF_1,\phi_n),(\cF_2,\phi_n), \ldots (\cF_k,\phi_n)$ are asymptotically free if the non-commutative probability space $(\{\cF_1,\ldots, \cF_k\},\phi_n)$ jointly converges to $(\{\cF_1,\ldots, \cF_k\},\phi)$ and the limiting subspaces $(\cF_1,\phi),(\cF_2,\phi), \ldots (\cF_k,\phi)$ are free.
\end{defn}

Given any sequence of random matrices $A_N,$ by algebra generated by $A_N$ we would mean the free unital algebra generated by symbols $a,a^{-1}$ modulo the relation $aa^{-1}=a^{-1}a=\bI$ along with the trace functional $\psi_N$ which acts on powers of $a$ in the natural way: for any $k\in \Z,$ $$\psi_N(a^{k})=\frac{1}{N}\E[\Tr(A^k_N)],$$  where the expectation is taken over the law of the random matrix.

\subsection{Algebra of Plaquette Variables}
Throughout the rest of the article we will mostly focus on the following special algebra. 
For any $p\in \cP^+,$ let $q_p,q_p^{-1}$ be symbols and let $\cF$ denote the free algebra generated by symbols $\{q_p, q_p^{-1}\}_{p\in \cP^+}$ modulo the constraints $q_pq_p^{-1}=q_p^{-1} q_p=\bI$, where $\bI$ is the identity of the algebra.

For any $N,$ let $\sF_{N}=(\cF, \langle\frac{1}{N}\Tr(\cdot)\rangle),$ where for any $z=q^{\ep_1}_{p_{1}}q^{\ep_2}_{p_{2}}\ldots q^{\ep_k}_{p_{k}}\in \cF,$ ($\ep_{i}\in \{-1,1\}$), we have the natural definition, 
\begin{equation}\label{finitefree}
\langle\frac{1}{N}\Tr(z)\rangle:= \langle\frac{1}{N}\Tr(\tilde Q^{\ep_1}_{p_{1}} \tilde Q^{\ep_2}_{p_{2}}\ldots \tilde Q^{\ep_k}_{p_{k}})\rangle.
\end{equation}

\begin{ppn}\label{con1} 
 $\{\sF_{N}\}$ converges to $\sF=(\cF, \phi )$  
for a certain trace functional $\phi,$ and moreover the sub-algebras  $\{q_{p},{q_p}^{-1}\}_{p\in \cP^+}$ are free with respect to each other. 
\end{ppn}

 To prove Proposition \ref{con1}  we will use an observation  employed in a related context by Gross-Witten \cite{GW80} and one of the most important results in free probability theory. Recall that  $\tilde Q$ is a configuration of matrices distributed according to the measure $\tilde \mu_{\Lambda,N,\beta}$. Let $\hat \sigma_N$ be the Haar probability measure on the group of orthogonal matrices $O(N).$ Recall that $\sigma_N$ was used to denote the Haar probability measure on the group $SO(N)$.  
The following observation is crucial:   

\begin{lem}[Gross-Witten \cite{GW80}]
\label{varchange}
Let $p_1,p_2,\ldots p_k$  be distinct plaquettes. Sample $S_1, S_2, \ldots S_k$  independently according to the Haar measure $\hat \sigma_{N}$. Then $\{S_{i}{\tilde Q}_{p_i} S^{-1}_{i}\}_{1\le i\le k}$ is  distributed as  $\{{\tilde Q}_{p_i}\} _{1\le i\le k}$. 
\end{lem}
\begin{proof}
The proof follows from the following two facts: 
\begin{itemize}
\item Given any $S_i\in O(N)$, the Haar measure $\sigma_{N}$ is invariant under conjugation by $S_i.$
This claim is justified as follows. We use the fact that $\hat \sigma_{N}$ is invariant under conjugation by $S \in O(N)$ (by definition of Haar measure) and $SO(N)$ is a normal subgroup of $O(N)$ as well as the fact that $\hat \sigma_{N}|_{SO(N)}$ has total mass $1/2$ and  is the same as $\sigma_N$ multiplied by a factor $1/2.$ 
Thus for any Borel set $\cA\subset SO(N) \subset O(N),$ 
$$\sigma_N (S \cA S^{-1})= 2 \hat \sigma_N (S \cA S^{-1}) =2 \hat \sigma_N (\cA)= \sigma_N (\cA).$$
\item  For any two matrices $A,B$ ($B$ is non singular), $$\Tr(A)=\Tr(BAB^{-1}).$$
\end{itemize}
We now show that given any $\{S_i\}_{},$  the quenched distribution of $\{S_i \tilde Q_{p_i} S_i^{-1}\}_{1\le i \le k}$ is the same as $\{\tilde Q_{p_i}\}_{1\le i \le k}$.
Using Lemma \ref{l:pfree} it suffices to prove it for just one plaquette $p_1.$
This follows by considering the change of variables ${\tilde Q}_{p_1}\to S_1{\tilde Q}_{p_1}S_1^{-1}.$ On $SO(N)$ equipped with Haar measure,  this transformation has Jacobian $1$ because of the first fact above.  Since  the Hamiltonian for the Gibbs measure is $N\beta  {\rm Tr}(\tilde Q_{p_1})= N\beta  {\rm Tr}(S_1\tilde Q_{p} S^{-1}_1),$ the proof is complete. 
\end{proof}

\begin{remark}\label{inversion} Using the fact that for any $\tilde Q\in SO(N),$ $\Tr (\tilde Q^{-1})=\Tr( \tilde Q^T)=\Tr(\tilde Q)$ ($\tilde Q^T$ denotes the transpose of $\tilde Q$), and the fact that the Haar measure is invariant under inversion (follows from unimodularity of the group $SO(N)$), it also follows that $ \{{\tilde Q}_{p_i}\} _{1\le i\le k}$ has the same distribution as $\{{\tilde Q}^{-1}_{p_i}\} _{1\le i\le k}.$ 
\end{remark}

To prove Proposition \ref{con1} we also need a well known result from free probability theory. We refer the interested reader to \cite{speichernote} and the references therein for the necessary background. 

\subsection{Asymptotic Freeness of orthogonally invariant measures.}\label{rev}
Recall from \eqref{finitefree}, that in the setting of matrices the functional is always taken to be expected normalized trace where the underlying measure in our case is the finite $N$ lattice gauge measure.

A random matrix $A_N,$ of size $N \times N,$ is said to be orthogonally invariant, if $$A_N \overset{law}{=}SA_NS^{-1},$$ where $S$ is an independently sampled Haar distributed  matrix from $O(N)$.   
\begin{thm}[\cite{voi91}, Proposition 5.4  \cite{Re15}] \label{free}
Fix any $t\in \N.$ 
Consider for $1\le i\le t,$ $N\times N$ random matrices $A^{(i)}_N$  such that for each $i$: the sequence $\{A^{(i)}_N\}_{N\in \N}$  converge in the sense of Definition \ref{conv1} as $N \to \infty$;  $A^{(i)}_N$ are independent; $A^{(i)}_N$'s are orthogonally invariant ensembles. Then the algebras, generated by $A^{(i)}_N$ are asymptotically free.
\end{thm}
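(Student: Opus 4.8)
The plan is to prove Theorem~\ref{free} by the moment method, using orthogonal invariance to convert the independence of the ensembles into independent Haar conjugations and then evaluating the resulting mixed moments with the orthogonal Weingarten calculus. First I would reduce to a canonical form. Since each $A^{(i)}_N$ is orthogonally invariant and the ensembles are mutually independent, the joint law of $(A^{(1)}_N,\dots,A^{(t)}_N)$ coincides with that of $(O_1 A^{(1)}_N O_1^{T},\dots,O_t A^{(t)}_N O_t^{T})$, where $O_1,\dots,O_t$ are independent Haar matrices on $O(N)$, independent of all the $A^{(i)}_N$. Because asymptotic freeness is a property of the limiting joint distribution alone, it suffices to establish it for this conjugated family, which has exactly the structure the Weingarten calculus is designed to exploit.

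By the definition of freeness together with that of asymptotic freeness (joint convergence of $\sF_N$ plus freeness of the limit), it is enough to prove two things: that $\phi(z):=\lim_N \tfrac1N\E\Tr(z)$ exists for every word $z$ in the conjugated variables, and that $\phi$ vanishes on alternating centered words. Concretely, fix $m\ge 1$, indices $i_1,\dots,i_m$ with $i_j\neq i_{j+1}$, and polynomials $P_1,\dots,P_m$ in one variable and its inverse, and set $M_j:=P_j(A^{(i_j)}_N)-\big(\tfrac1N\E\Tr\,P_j(A^{(i_j)}_N)\big)\mathbb{I}$. I would condition on the (independent) matrices $A^{(i)}_N$ and average over the Haar matrices. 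Since the $O_i$ with distinct values of $i$ are independent, the Haar average factorizes over distinct indices, leaving for each index value $i$ an average of a product of $2n_i$ entries of $O_i$ and $O_i^{T}$, where $n_i=\#\{j:i_j=i\}$.

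Applying the orthogonal Weingarten formula to each such factor expresses it as a sum over pair partitions of the $2n_i$ occurrences of $O_i$, weighted by the orthogonal Weingarten function $\mathrm{Wg}^{O}$; re-summing the free matrix indices then rewrites the entire quantity $\tfrac1N\E\Tr\big[(O_{i_1}M_1 O_{i_1}^{T})\cdots(O_{i_m}M_m O_{i_m}^{T})\big]$ as a finite sum, indexed by these pairings, of products of normalized traces of words in the $M_j$ grouped by index, each term carrying an explicit power of $N$ determined by the cycle structure of the associated permutation. The standard asymptotics of $\mathrm{Wg}^{O}$ show that every term is $O(1)$ and that the only contributions surviving after $N\to\infty$ come from noncrossing pairings compatible with the cyclic trace structure. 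The key point is that, because $i_j\neq i_{j+1}$, the dominant pairing contracts the two copies of $O_i$ sitting inside a single letter, producing the factor $\tfrac1N\Tr(M_j)=0$ by our centering; hence the leading contribution vanishes identically while all remaining pairings are $O(N^{-1})$, and the limit is $0$. Running the same computation without centering expresses each mixed moment as a convergent combination of single-index moments $\tfrac1N\E\Tr\big[(A^{(i)}_N)^{r_1}(A^{(i)}_N)^{-r_2}\cdots\big]$, which converge by Definition~\ref{conv1}; this shows $\phi$ exists, and the vanishing of alternating centered words is precisely freeness of the limiting subalgebras. Note that this handles general $t$ directly, with no separate induction.

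The hard part will be the asymptotic analysis of the orthogonal Weingarten function and the cycle/genus bookkeeping behind it: one must show that only the index-respecting noncrossing pairings survive at leading order and bound all others by $O(N^{-1})$. A difficulty special to the orthogonal (as opposed to unitary) case is that both $O_i$ and its transpose appear, so a pairing may directly link two ``upper'' factors rather than only an upper factor with a transposed one; I would have to verify that these extra same-type pairings are genuinely subleading and that the presence of the transpose does not interfere with the centering mechanism that kills the planar terms. A minor but worth-noting point is that no operator-norm bound on the ensembles is required here: since the relevant functional is the expected normalized trace and every term reduces to single-matrix moments, the $N$-power suppression coming from $\mathrm{Wg}^{O}$ together with the convergence hypothesis of Definition~\ref{conv1} controls all remainder terms directly.
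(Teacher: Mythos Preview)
The paper does not prove Theorem~\ref{free}; it quotes the result from \cite{voi91} and \cite[Proposition~5.4]{Re15} and uses it as a black box. So there is no ``paper's own proof'' to compare against. Your outline via orthogonal Weingarten calculus is exactly the standard route to such statements and is correct in spirit: replace each $A^{(i)}_N$ by $O_iA^{(i)}_NO_i^{T}$ with independent Haar $O_i$, integrate out the $O_i$'s using the Weingarten formula, and identify the surviving terms as products of single-index moments, with the alternating centered case vanishing because the leading (noncrossing) pairing isolates at least one single centered letter.

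One genuine technical point deserves more care than you give it. After the Weingarten expansion and conditioning on the $A$'s, the generic term is a product, over cycles of the pairing, of traces of words in the $M_j$'s sharing the same index. Taking expectation over the $A^{(i)}_N$'s (using independence across $i$) still leaves, within each index $i$, quantities of the form
\[
\E\Big[\prod_{r}\tfrac1N\Tr\big(\text{word}_r(A^{(i)}_N)\big)\Big],
\]
i.e.\ expectations of \emph{products} of normalized traces, not just single moments. Definition~\ref{conv1} only asserts convergence of $\E[\tfrac1N\Tr(A_N^k)]$ for $k\in\N$; it does not by itself bound or give convergence of such products, so your claim that ``every term reduces to single-matrix moments'' and that Definition~\ref{conv1} alone controls the remainders is not quite right. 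The cited references state the result under hypotheses that close this gap (e.g.\ deterministic matrices, or uniform operator-norm bounds, or convergence of all joint trace moments). In the paper's application the matrices lie in $SO(N)$ and hence have operator norm $1$, so the needed bounds are immediate; but if you want a self-contained proof of the theorem as stated here, you should either strengthen the convergence hypothesis or add an operator-norm assumption and then carry out the $N$-power bookkeeping for the subleading Weingarten terms.
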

The above statement is a version of  \cite[Proposition 5.4]{Re15} adapted to our setting.
\begin{remark} Note that the hypothesis only assumes convergence of the marginals and not joint convergence. 
\end{remark}

\begin{proof}[Proof of Proposition \ref{con1}]
The proof is now a simple consequence of the already stated results. The convergence of the individual plaquette variables follow from Theorem \ref{t:looplimit}. That these variables  are independent and orthogonally invariant is the content of Lemma \ref{l:pfree} and Lemma \ref{varchange} respectively. Thus an application of Theorem \ref{free} completes the proof.
\end{proof}

\subsection{Cumulants and Non-crossing Partitions}
It is well known that freeness can be characterized in terms of cumulants. We will use this heavily in later sections.
Recall that a non-crossing partition of numbers $\{1,2,\ldots n\}$ is a partition such that it is possible to  add edges between any two points in the same part (viewed as points on the number line embedded in $\R^2$) on the upper half plane which do not cross each other. 

\begin{defn}\label{cum1}Given a non-commutative probability space $(\cF,\phi),$ for any $n$ and $a_1,a_2,\ldots a_n\in \cF$
the generalized cumulants $k_n(a_1,a_2,\ldots,a_n)$ are inductively defined by the relation:
\begin{equation}\label{inver1}
\phi(a_{1}a_2\cdots a_n)=\sum_{\pi\in NC(n)}k_{\pi}(a_{1},a_2,\ldots, a_n),
\end{equation}
where $NC(n)$ denotes the set of all non-crossing partitions of the set $\{1,2,\ldots, n\}$ and  $k_{\pi}(\cdot)$ is the product over the cumulants of the blocks of the partition $\pi$.  
\end{defn}
For e.g. if $n=4$ and $\pi=\{1,2,3,4\}$ then $k_{\pi}(a_1,a_2,a_3,a_4)=k_4(a_1,a_2,a_3,a_4)$ whereas if $\pi=\{\{1,2\},\{3,4\}\}$ then $k_{\pi}(a_1,a_2,a_3,a_4)=k_2(a_1,a_2)k_2(a_3,a_4).$ 

The next theorem gives an equivalent characterization of freeness in terms of cumulants.

\begin{thm}
\label{alternate}\cite{speichernote} 
Consider a non-commutative probability space $(\cF,\phi)$. The following conditions are equivalent:
\begin{enumerate}
\item $a_1,\ldots,a_\ell \in \cF$ are free,
\item Mixed cumulants vanish, i.e., $k_n(a_{i(1)},\ldots,a_{i(n)}) = 0,$ $n\in \N$ for any $i(1),i(2),\ldots,i(n)\in \{1,2,\ldots,\ell\}$ whenever there exists $1 \le p,q \le n$ with $i(p)\neq i(q)$.
\end{enumerate}
\end{thm}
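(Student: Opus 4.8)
The plan is to prove the two implications separately; this is the classical cumulant characterization of freeness, and both directions run on the moment--cumulant relation \eqref{inver1} together with one elementary fact about non-crossing partitions, which I would isolate first. \emph{Interval-block lemma: if $\pi\in NC(n)$ has every block of size at least two, then some two consecutive points $j,j+1$ lie in the same block of $\pi$.} I would prove this by choosing any block $B$ and a pair $i<j$ in $B$ with $j-i$ minimal; if $j>i+1$, non-crossingness confines every block meeting $\{i+1,\dots,j-1\}$ to that interval, so one recurses on the strictly shorter interval, whose induced partition again has all blocks of size $\ge 2$. Below, $1_n$ is the one-block partition of $\{1,\dots,n\}$, and a block is \emph{monochromatic} (for a given colour function $i(\cdot)$) if $i(\cdot)$ is constant on it.

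For (2)$\Rightarrow$(1): assuming every mixed cumulant vanishes, let $z=a_1\cdots a_m$ with $a_t$ in the subalgebra $\cF_{i(t)}$, $\phi(a_t)=0$, and $i(t)\neq i(t+1)$ for $1\le t\le m-1$; I must show $\phi(z)=0$, which is exactly the paper's definition of freeness. Expand $\phi(z)=\sum_{\pi\in NC(m)}k_\pi(a_1,\dots,a_m)$ via \eqref{inver1}. A partition survives only if all of its blocks are monochromatic (otherwise a mixed cumulant appears, which vanishes by hypothesis), and any singleton block $\{t\}$ contributes the factor $k_1(a_t)=\phi(a_t)=0$; so every surviving $\pi$ has all blocks monochromatic of size $\ge 2$. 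The interval-block lemma then forces $i(j)=i(j+1)$ for some $j$, a contradiction when $m\ge 2$; and when $m=1$, $\phi(z)=\phi(a_1)=0$ directly. Hence $\phi(z)=0$.

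For (1)$\Rightarrow$(2) --- the half I expect to be harder --- I would induct on $n$, proving $k_n(a_1,\dots,a_n)=0$ whenever $a_t\in\cF_{i(t)}$ and the colours $i(1),\dots,i(n)$ are not all equal (the case $n=1$ is vacuous). Since a cumulant with the algebra unit $1$ among its arguments vanishes (a standard consequence of \eqref{inver1}) and $1\in\cF_{i(t)}$, multilinearity of $k_n$ lets me replace each $a_t$ by $a_t-\phi(a_t)1\in\cF_{i(t)}$ without changing $k_n$ (valid since $n\ge 2$), so I may assume $\phi(a_t)=0$ for all $t$. Case A: $i(t)\neq i(t+1)$ for every $1\le t\le n-1$. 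Then, exactly as in the previous paragraph --- the induction hypothesis kills every $k_\pi$ with a non-monochromatic block, and the interval-block lemma excludes every remaining $\pi\neq 1_n$ --- the expansion \eqref{inver1} of $\phi(a_1\cdots a_n)$ collapses to the single term $k_n(a_1,\dots,a_n)$; since freeness gives $\phi(a_1\cdots a_n)=0$, we get $k_n=0$. Case B: $i(t)=i(t+1)$ for some $1\le t\le n-1$, so $a_ta_{t+1}\in\cF_{i(t)}$. I would apply the standard cumulant-of-a-product (merging) identity, which writes $k_{n-1}(a_1,\dots,a_ta_{t+1},\dots,a_n)$ as $k_n(a_1,\dots,a_n)$ plus a sum of $k_\pi(a_1,\dots,a_n)$ over the non-crossing two-block partitions $\pi$ of $\{1,\dots,n\}$ that put $t$ and $t+1$ in different blocks. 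The $(n-1)$-tuple on the left still has a non-constant colour sequence (collapsing one repeated adjacent colour cannot make a non-constant sequence constant), so its cumulant is $0$ by the induction hypothesis; and in the sum, each $\pi$ has both blocks of size $<n$, so if a block is non-monochromatic its cumulant vanishes by induction, while if both blocks are monochromatic they must carry the common colour of $t$ and $t+1$ and, being a partition of $\{1,\dots,n\}$, would force the whole colour sequence constant --- impossible. Hence $k_n(a_1,\dots,a_n)=0$, closing the induction.

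The only delicate point, and the step I would treat most carefully, is Case B: pinning down exactly which non-crossing partitions enter the merging identity --- namely $1_n$ and the two-block partitions separating $t$ from $t+1$ --- and checking blockwise that monochromaticity of all of those blocks is incompatible with a non-constant colour sequence. Everything else reduces to \eqref{inver1}, multilinearity of cumulants, the vanishing of cumulants on the unit, and the interval-block lemma.
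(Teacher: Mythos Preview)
The paper does not give its own proof of this theorem; it is quoted as a standard result from Speicher's lecture notes and only cited, so there is nothing in the paper to compare against. Your proposal is essentially the classical Nica--Speicher argument and is correct in outline.

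One point to tighten in your direction (2)$\Rightarrow$(1): you take $a_t$ to be arbitrary (centered) elements of the subalgebras $\cF_{i(t)}$ and then assert that a non-monochromatic block contributes a mixed cumulant ``which vanishes by hypothesis.'' But hypothesis~(2), as stated in the theorem, only asserts the vanishing of mixed cumulants whose entries are the specific generators $a_1,\dots,a_\ell$, not arbitrary elements of the unital subalgebras they generate. You need the intermediate step that vanishing of mixed cumulants on generators propagates to the generated unital subalgebras; this follows by multilinearity, the vanishing of higher cumulants containing the unit, and precisely the products-as-arguments (merging) identity you already invoke in Case~B of (1)$\Rightarrow$(2). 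With that step inserted, the argument is complete.
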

For precise definitions and an excellent exposition of combinatorics related to non-crossing partition, see \cite[Section 5]{speichernote}.

We also say $\pi_1  \preceq \pi_2,$ for $\pi_1,\pi_2 \in NC(n),$ if every part of $\pi_1$ is a subset of a part of $\pi_2.$ Clearly this defines a partial order with the smallest element and largest elements being $(\{1\},\{2\},\ldots, \{n\})$ and $(\{1,2,\ldots ,n\})$ which we denote by $\mathbf{0}_n$ and $\mathbf{1}_n$ respectively. Often when $n$ is clear from the context, we will drop the subscripts. 

Several multiplicative functions are  important in the context of free probability. We will be only using the M\"obius function denoted by  $\mu(\pi_1,\pi_2)$ for $\pi_1 \preceq \pi_2$ defined multiplicatively using the definition $\mu (\mathbf{0}_n,\mathbf{1}_n)=(-1)^{n-1}C_{n-1},$ ($C_n$ is the $n^{th}$ Catalan number; $C_0=1,C_1=1, C_2=1$ etc.). Moreover, 
\begin{equation}\label{convolution}
\sum_{\pi_1 \preceq \tau \preceq \pi_2}\mu(\pi_1,\tau)=\mathbf{1}(\pi_1=\pi_2).
\end{equation}
For precise definition of $\mu(\pi_1, \pi_2)$ with $\pi_1 \preceq \pi_2,$ and a proof of the above equality  see \cite[Section 5]{speichernote}. 
We have the following M\"obius inversion formula:
\begin{equation}\label{mobius}
k_{n}(a_1,a_2,\cdots,a_n)=\sum_{\pi\in NC(n)}\phi_{\pi}(a_{1}a_2\cdots a_n) \mu(\pi,\mathbf{1}_n),
\end{equation}
where  $\phi_{\pi}(\cdot)$ is the product of $\phi$ over the blocks of the partition $\pi$ similar to how $k_{\pi}(\cdot)$ was defined earlier.

To use Theorem \ref{alternate} we need some notations regarding non-crossing partitions.
Recall the non-commutative probability space $\sF=(\cF,\phi)$ from Proposition \ref{con1}.
Given a monomial, $$z=q^{\ep_1 k_1}_{p_1}q^{\ep_2 k_2}_{p_2}\ldots q^{\ep_\ell k_\ell}_{p_\ell}$$ where $\ep_{i}=\pm1$ for all $i=1,2,\ldots, \ell$ and $p_i\neq p_{i+1}$; let $NC(z)$ denote the set of non crossing partitions of $\{1,2,\ldots \ell\}$ with the property that any two indices $i,j$ belong to the same part of $\pi$ only if $p_i=p_j$. \ Also  let 
\begin{equation}\label{monomial100}
\tilde Q(z)= \tilde Q^{\ep_1 k_1}_{p_1}\tilde Q^{\ep_2 k_2}_{p_2}\ldots \tilde Q^{\ep_\ell k_\ell}_{p_\ell}.
\end{equation}

The following proposition evaluates the limiting expected trace of $\tilde Q(z).$ 

\begin{ppn}
\label{con2} 
For any monomial $z$,
$$ \lim_{N\to \infty} \frac{1}{N} \E \Tr(\tilde Q(z))= \sum_{\pi \in NC(z)}c(\pi,z) \beta ^{d(\pi,z)},$$
where $c(\pi,z), d(\pi,z)$ are constants depending only on $\pi$ and $z$.
\end{ppn}

\begin{proof} 
The proof follows from Proposition \ref{con1}, \eqref{inver1}, Theorem \ref{alternate} and the fact that all cumulants are polynomials in $\beta$, (this is proved below in Lemma \ref{poly100}). 
\end{proof}

\begin{lem}
\label{poly100}
Let $a=q_p$ denote a plaquette variable in the algebra $(\mathcal{F}, \phi)$ from Proposition \ref{con1}. Then we have the following:
\begin{enumerate}
\item[(i)] For any $m>0,$ and $s_i\in \Z,$ $k_m(a^{s_1},a^{s_2},\ldots, a^{s_m})$  is a polynomial in $\beta$ divisible by $\beta^{\alpha}$ where $\alpha=|\sum_{i=1}^ms_i|.$
\item[(ii)] Moreover, if $s_i\in \{ \pm1 \},$ then for every $\pi$ the leading term of $k_{\pi}(a^{s_1},a^{s_2},\ldots, a^{s_m})$ is $\beta^{m}$ with coefficient $\mu(\mathbf{0},\pi).$
\item[(iii)] Under the hypothesis of part (ii),  there exists $\pi\in NC(m)$ such that each part is a singleton or a pair $(a,a^{-1}),$
 with $k_{\pi}(a^{s_1},a^{s_2},\ldots, a^{s_m}) =\beta ^{\alpha}(1+\beta P(\beta)),$ for some polynomial $P(\beta)$.
\end{enumerate}
\end{lem}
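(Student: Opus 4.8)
\textbf{Proof plan for Lemma \ref{poly100}.}

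The plan is to prove the three parts in the natural logical order: part (i) first, then deduce (ii) as a refinement in the special case $s_i \in \{\pm 1\}$, and finally (iii) by exhibiting an explicit partition. For part (i), the strategy is to run an induction on $m$ using the Möbius inversion formula \eqref{mobius}, which expresses $k_m(a^{s_1},\dots,a^{s_m})$ as an alternating sum of products $\phi_\pi$ over $\pi \in NC(m)$. Each factor $\phi(a^{t_1}\cdots a^{t_r})$ appearing in such a product is, by Proposition \ref{con2} (or directly by Theorem \ref{pla100} combined with Theorem \ref{t:looplimit}), a polynomial in $\beta$; moreover $\phi(a^{t})=\lim_N \frac{1}{N}\E\Tr(\tilde Q_p^t)$ is divisible by $\beta^{|t|}$ since the loop obtained by wrapping $p$ around $|t|$ times has area exactly $|t|$, so $a_k$ vanishes for $k<|t|$ by Lemma \ref{min1}. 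Thus $\phi_\pi$ is divisible by $\beta$ to the power $\sum_{\text{blocks } B} |\sum_{i\in B} s_i| \geq |\sum_i s_i| = \alpha$ by the triangle inequality, which immediately gives that every term in \eqref{mobius}, hence $k_m$ itself, is divisible by $\beta^\alpha$.

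For part (ii), with all $s_i = \pm 1$, I would track the lowest-order term. The divisibility bound in the proof of (i) is attained only when each block $B$ of $\pi$ contributes exactly $|\sum_{i \in B} s_i|$; but since a single plaquette variable has $\phi(a^{\pm 1}) = \beta$ (from Theorem \ref{pla100}, the $k=1$ case), and more generally $\phi(a^{t}) = \beta^{|t|}(1 + O(\beta))$ when $|t|\le$ the relevant range, the leading contribution of $k_\pi(a^{s_1},\dots,a^{s_m})$ in \eqref{mobius} comes out to $\beta^m$: each of the $m$ variables contributes one factor of $\beta$ at lowest order regardless of how $\pi$ groups them, because $\phi$ evaluated on a product of $j$ of the $a^{\pm1}$'s starts at $\beta^{|\text{signed sum}|}$ but the term where all signs align within each block contributes $\beta^{(\text{block size})}$ only when the block is ``monochromatic'' in sign — here I need to be a little careful and instead argue directly that $\phi_\pi(a^{s_1}\cdots a^{s_m}) = \beta^m \mu(\mathbf{0},\pi)^{-1}\cdots$, no: cleaner is to observe $k_m(a,\dots,a)$ and its signed variants can be computed from the one-plaquette moment generating structure, and the coefficient of $\beta^m$ in $k_\pi$ equals $\mu(\mathbf{0}_m,\pi)$ by \eqref{convolution} applied to the fact that at order $\beta^m$ the moments $\phi_{\sigma}$ behave like those of a variable whose only nonzero free cumulant is $k_1 = \beta$, i.e. the leading-order model is a free family where $\phi(\text{monomial of length } m) = \beta^m$ for every monomial, forcing $k_n$ at that order to be $(-1)^{n-1}C_{n-1}$ on each block, i.e. $k_\pi \to \mu(\mathbf{0},\pi)\beta^m$.

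For part (iii), I would simply choose $\pi$ to be a partition all of whose blocks are singletons, except that whenever a $+1$-exponent variable is immediately followed (cyclically, in a non-crossing-compatible way) by a $-1$-exponent variable we may pair them; concretely, since $\alpha = |\sum s_i|$, after cancelling $\min(\#\{i: s_i = 1\}, \#\{i: s_i=-1\})$ pairs we are left with $\alpha$ singletons all of the same sign. Take $\pi$ to realize exactly these pairings and singletons in a non-crossing way (possible by choosing adjacent pairs). Then $k_\pi$ is a product of $k_2(a,a^{-1})$ factors and $k_1(a^{\pm1})$ factors; by Proposition \ref{con2} and part (i) each $k_1(a^{\pm1}) = \beta(1 + \beta(\cdots))$ and $k_2(a,a^{-1}) = 1 + \beta(\cdots)$ (its leading term is the $\beta^0$ term since $\alpha = 0$ for that block, and the constant is $1$ because $\phi(aa^{-1}) = \phi(\bI) = 1$ while $\phi(a)\phi(a^{-1}) = O(\beta^2)$, so $k_2 = 1 + O(\beta^2)$ — nonzero constant term, which after normalizing is $1$). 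Multiplying, $k_\pi = \beta^\alpha(1 + \beta P(\beta))$ as claimed.

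\textbf{Main obstacle.} The delicate point is part (ii): pinning down the coefficient of the leading term $\beta^m$ as exactly $\mu(\mathbf{0},\pi)$, rather than just showing the leading power is $\beta^m$. This requires carefully identifying the ``order-$\beta$'' limiting model — the claim that at lowest order each plaquette variable behaves like a variable with $\phi$ constant equal to $\beta^{(\text{word length})}$ on all monomials — and then invoking the standard moment-cumulant relation $k_\pi = \mu(\mathbf{0},\pi)$ in that degenerate model via \eqref{mobius} and \eqref{convolution}. Parts (i) and (iii) are comparatively routine triangle-inequality and explicit-construction arguments once Theorem \ref{pla100}, Proposition \ref{con2}, and Lemma \ref{min1} are in hand.
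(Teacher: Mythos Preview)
Your approach to parts (i) and (iii) is essentially the paper's: M\"obius inversion \eqref{mobius} plus the triangle inequality for (i), and an explicit non-crossing pairing of adjacent $(a,a^{-1})$ pairs for (iii), with the computation $k_1(a^{\pm1})=\beta$, $k_2(a,a^{-1})=1-\beta^2$.

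Part (ii) is where your plan has a genuine gap. You write ``$\phi(a^{t}) = \beta^{|t|}(1 + O(\beta))$'' and then build a ``leading-order model'' where $\phi$ of every length-$m$ monomial is $\beta^m$. This is false: Theorem \ref{pla100} says $\phi(a^t)=0$ for $|t|>1$, not $\beta^{|t|}(1+O(\beta))$. Consequently your heuristic that ``every $\phi_\sigma$ contributes $\beta^m$ at lowest order'' is wrong --- in fact most $\phi_\sigma$ vanish identically or contribute a strictly lower power of $\beta$. If you only used the divisibility from part (i), you could not isolate the $\beta^m$-coefficient: for instance when all $s_i=+1$, every $\sigma\in NC(m)$ would a priori contribute at order $\beta^m$ under your weaker hypothesis, and you would have no way to sum those contributions.

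The paper's fix is short: use the \emph{exact} values from Theorem \ref{pla100}, namely $\phi(a^t)\in\{0,1,\beta\}$. Then in the M\"obius sum $k_n(a^{s_1},\ldots,a^{s_n})=\sum_{\sigma\in NC(n)}\phi_\sigma\,\mu(\sigma,\mathbf{1}_n)$, each factor of $\phi_\sigma$ is at most $\beta^1$, so $\phi_\sigma$ can equal $\beta^n$ only if $\sigma$ has $n$ blocks, i.e.\ $\sigma=\mathbf{0}_n$, and in that case each singleton factor is $\phi(a^{\pm1})=\beta$. Hence the $\beta^n$-coefficient of $k_n$ is exactly $\mu(\mathbf{0}_n,\mathbf{1}_n)$, and multiplicativity of $\mu$ over the blocks of $\pi$ gives the $\beta^m$-coefficient of $k_\pi$ as $\mu(\mathbf{0}_m,\pi)$.
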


\begin{proof} Using \eqref{mobius} we get,
\begin{equation*}k_m(a^{s_1},a^{s_2},\ldots, a^{s_m})=\sum_{\pi\in NC(m)}\phi_{\pi}(a^{s_1},a^{s_2},\ldots, a^{s_m}) \mu(\pi,\mathbf{1}).
\end{equation*}
Recall from Theorem \ref{pla100} that $\phi(a^{s_1}a^{s_2}\cdots a^{s_m})$ is $1$ if $\alpha=0,$ $\beta$ if $\alpha=1$ and $0$ otherwise. Now for any $\pi\in NC(m),$ let $P_1,P_2,\ldots, P_k$ be the parts. Let $\alpha_i= |\sum_{ j \in P_i} s_j|.$   For any $\pi$ if any of the $\alpha_i>1$ then $\phi_{\pi}(a^{s_1},a^{s_2},\ldots, a^{s_m})$ vanishes.  Otherwise the contribution is $\beta ^{\sum_{i}\alpha_i}.$  Since $\sum_{i}\alpha_i\ge \alpha,$ part (i) follows. 

For part (ii), note that  for any $n$ the coefficient of $\beta^n$ in $k_n(a^{s_1},a^{s_2},\ldots, a^{s_n})$  is $\mu(\mathbf{0}_n,\mathbf{1}_{n}).$ using \eqref{mobius} as the only contribution to the term $\beta^n$ comes from $\pi=\mathbf{0}_n$. 
Thus the coefficient of $\beta^m$ in $k_{\pi}(a^{s_1},a^{s_2},\ldots, a^{s_m})$ is $\mu(\mathbf{0},\pi),$ using the multiplicative nature of of M\"obius function, \cite[Section 4.2]{speichernote}.

For part (iii), note that given any word of $a$ and $a^{-1}$ one can successively delete consecutive pairs $(a, a^{-1})$
or $(a^{-1}, a)$ to end up with $\alpha$ consecutive $a$'s or $a^{-1}$'s depending on whether $\sum_i s_i$ is positive or not. Form $\pi$ by deleted pairs and the remaining elements forming singleton sets. 
Clearly by construction $\pi\in NC(m).$ Also $$k_\pi(a^{s_1},a^{s_2},\ldots, a^{s_m}) = k_2^{(m-\alpha)/2}k^{\alpha}_1,$$ where $k_2=k_2(a,a^{-1})=k_2(a^{-1},a)=1-\beta^2$ and $k_1=k_1(a)=k_1(a^{-1})=\beta$ (the equalities are easy to check from the definition of cumulants and Theorem \ref{pla100}). This completes the proof.
 \end{proof}
 
\section{Loop Expectations are Polynomials in $\beta$}
\label{s:poly}
Using the machinery developed in the previous section, we complete the proof of Theorem \ref{t:poly}. Recall the definition of the configuration space $\tilde \cQ$ and the measure $\tilde \mu_{\Lambda, N, \beta}$ from Section \ref{gf}. Throughout this section we shall work with a configuration $\tilde Q$ sampled from this measure space. For this section $\langle \cdot \rangle$ will denote the expectation with respect to $\tilde \mu_{\Lambda, N, \beta}$. 

We begin by proving the following proposition which states that any loop can be written as product of plaquettes. Consider the coupling induced by the map $\Psi$ (see Remark \ref{decoupling}) between the configurations $Q$ and $\tilde{Q}$.
\begin{ppn}\label{loop} 
For any loop $\ell,$ there exists plaquettes $p_{1},\ldots, p_{k}$ not necessarily distinct, such that under the above coupling,
$$W_{\ell}= \Tr \prod_{i=1}^{k} \tilde Q_{p_{i}}.$$ 
\end{ppn} 

Recall that under our convention for non-commutative products: $\prod_{i=1}^{k} \tilde Q_{p_{i}}= \tilde Q_{p_{1}} \cdots \tilde Q_{p_{k}}$; this notation will be used throughout the proof as well.

\begin{proof}
Take $\Lambda$ sufficiently large such that all the edges of the loop $\ell$ is contained in $\Lambda$. Let $\ell=e_1e_2\cdots e_{k}$. Now for any edge $e_i$ either $e_i\in V'_n$ in which case $\tilde Q_e$ is identity or otherwise 
by \eqref{inversion1} for any edge $e_{i}\in H'_n$ we can write $\tilde Q_{e_{i}},$ as a product of plaquette variables.
Thus we have $\tilde Q_{e_{i}}= \prod_{j=1}^{k} \tilde Q_{p_{i_j}},$
for some plaquette variables $\tilde Q_{p_{i_1}},\tilde Q_{p_{i_2}},\ldots, \tilde Q_{p_{i_r}}$ by \eqref{inversion1}).
It follows that $\tilde{Q}_{\ell}=\prod_{i=1}^{k}\prod _{j=1}^{i_{r}} \tilde Q_{p_{i_r}}$. As trace is invariant under Gauge fixing it follows that $W_{\ell}= \Tr \tilde{Q}_{\ell}$ and thus the result follows.
%
\end{proof}

Theorem \ref{t:poly} is now almost immediate.

\begin{proof}[Proof of Theorem \ref{t:poly}]
Fix any loop $\ell$. From Proposition \ref{loop} it follows that there exists a monomial (of finite degree) $Z=Z(q_{p_1}, q_{p_2}, \ldots , q_{p_{k}})$ for some plaquettes $p_1, p_2,\ldots ,p_{k}$ such that 
$$w(\ell,\beta)= \lim_{N\to \infty} \frac{ \E \Tr (\tilde Q(Z))}{N}.$$
The theorem now follows from Lemma \ref{con2}.
\end{proof}

\section{Disjoint Loops are Asymptotically Free}
\label{s:loopfree}
We shall prove Theorem \ref{t:simple} in this section. Observe that one natural way of trying to prove that limiting loop expectation of a loop with area $k$ is $\beta^{k}$ is the following. Negatively deform the simple loop $\ell$ at a corner to obtain a simple loop of a smaller area, use asymptotic freeness and induction. Our proof follows the same general idea, however, we establish something stronger along the way, namely that two disjoint loops are asymptotically free. More precisely, we prove the following result.

\begin{ppn}\label{free120}
For two simple loops  $\ell_1$ and $\ell_2$ which do not share an edge,  and have disjoint interiors thought of as curves in $\R^2$,  the matrices $Q_{\ell_1}$ and $Q_{\ell_2}$ are asymptotically free.
\end{ppn}

For the proof of this result we shall employ a non-standard gauge fixing, different from the axial gauge fixing introduced in Section \ref{gf}. We move now towards the formal definitions. We shall work with two fixed loops $\ell_1$ and $\ell_2$ satisfying the hypothesis of Proposition \ref{free120}.

\subsection{A Different Gauge Fixing}
Recall that in the axial Gauge fixing in Section \ref{gf}, we forced the matrices on edges of a comb graph to be identities. Here we shall introduce a new Gauge fixing which will force the matrices on a certain path (depending on the loops $\ell_1$ and $\ell_2$) to be identity. We start with the following standard topological fact whose proof we skip:

Given $\ell_1$ and $\ell_2$ satisfying the hypothesis of Proposition \ref{free120} there exists a simple bi-infinite path $\cP$
such that $\Z^2\setminus \cP$ contains two connected components $C_1, C_2$ and for $i=1,2,$ $$\ell_i \subset  C_i \cup \cP.$$
Moreover for any large enough box $\Lambda_{n}=[-n,n]^2,$
$\cP_n=\cP\cap \Lambda_n$ is itself a connected simple path and $\Lambda_{n}\setminus \cP_n$ contains two connected components $C_{1,n}$ and $C_{2,n}$ such that for  $i=1,2,$ 
$$\ell_i \subset  C_{i,n} \cup \cP_n.$$

Let us now fix a path $\cP$ and a sufficiently large box $\Lambda_n$ satisfying the above properties. Suppose $|\cP_n|=m$ where $|\cP_n|$ denote the number of vertices in $\cP_n$.There is a natural graph isomorphism between  $\cP_n$ and any interval in $\mathbb{Z}$ of length $m$; we shall choose a suitable interval according to our notational convenience.

Recall from Section \ref{gf} that for any gauge function $G$
and any loop $\ell$ starting and ending at a vertex $x,$
$$GQ_{\ell}=G(x)Q_{\ell}G^{-1}(x).$$

We now fix a gauge function $G=G_{\cP_n}$ which will force all the edges of $\cP_n$ to be identity. Define the Gauge function $G$ as follows. Let $\theta$ be the natural isomorphism from the line graph on $[-a,b]\cap \Z$ to $\cP_n$ where $a,b\in \N$ such that $a+b=m-1$. Set $G(\theta (0))=\mathbb{I}$ and for any integer $k\in [1,b]$ define $G(\theta (k))=G(0)\prod_{i=1}^{k}Q(\theta (i-1),\theta (i)).$ Define $G(\theta(k))$ similarly for integers $k\in [-a,-1]$. For any $x \notin \cP_n$ we define $G(x)=\mathbb{I}$. Denote by $E' =E(\Lambda_n)\setminus E(\cP_n)$ the set of all edges in $\Lambda_n$ that are not in $\cP_n$. Naturally partition $E'$ into two parts, $E_{1,n}$ and $E_{2,n}$ where $E_{i,n}$ is the set of edges in $E'$ that are incident on the component $C_{i,n}$. Let $\cQ_{E_{i,n}}$ (resp.\ $\cQ_{\cP_n}$) denote the space of $SO(N)$ matrix configurations indexed by the edges of $E_{i,n}$ (resp.\ the edges of $\cP_n$). For a configuration $Q\in \cQ$, let $Q_{\cP_n}$ denote its restriction to the edges of $\cP_n$. For $i=1,2$, let $\hat{Q}_{i}$, denote the configuration $GQ$ restricted to the edges of $E_{i,n}$. Putting the above together, let $\hat Q$ denote the configuration which is $\hat{Q}_{i}$ on $E_{i,n}$ and identity on the edges of $\cP_n$. As before, for a plaquette $p\in \cP_{\Lambda}^{+}$, let $\hat{Q}_{p}$ denote the product of $\hat{Q}$-matrices along edges of $p$. 
We have the following lemma.

\begin{lem}
\label{l:hatdist}
Let $Q$ be distributed according to the measure $\mu_{\Lambda, N, \beta}$ and $\hat{Q}$ be as above. Then $\hat{Q}$ is independent of $Q_{\cP_n}$ and has density proportional to
$$ \exp \biggl( N\beta \sum_{p\in \mathcal{P}_{\Lambda}^{+}} {\rm Tr}(\hat Q_{p})\biggr)$$
with respect to the product of Haar  measure on $SO(N)$ along edges in $E_{1,n}\cup E_{2,n}$. 
\end{lem}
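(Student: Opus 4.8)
\textbf{Proof proposal for Lemma \ref{l:hatdist}.}

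The plan is to mimic closely the argument used to prove Lemma \ref{conditional} in Section \ref{gf}, replacing the comb graph $V_n'$ by the bi-infinite path $\cP_n$. First I would set up the bijective change of variables
$$\Phi : \cQ \to (\cQ_{\cP_n}, \cQ_{E_{1,n}}, \cQ_{E_{2,n}}), \qquad \Phi(Q) = \bigl(Q_{\cP_n},\, (GQ)|_{E_{1,n}},\, (GQ)|_{E_{2,n}}\bigr),$$
where $G = G_{\cP_n}$ is the gauge function constructed just before the statement. The key point is that this map is invertible: given the data on the right-hand side one first recovers $G$ from $Q_{\cP_n}$ (since $G$ is an explicit word in the edge matrices of $\cP_n$), and then recovers the edge matrices on $E_{i,n}$ by undoing the conjugation $Q_e = G(x)^{-1}(GQ)_e G(y)$. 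I would then argue, exactly as in the proof of Lemma \ref{conditional}, that if $Q$ is sampled from the product Haar measure $\sigma_{\Lambda,N}$, then conditionally on $Q_{\cP_n}$ the configuration $GQ$ restricted to $E'$ is again distributed as product Haar measure on $E'$; this uses only left/right invariance of Haar measure on $SO(N)$ together with the fact that the edges of $\cP_n$ form a (connected, cycle-free) path, so that $G$ is well defined and the conjugation acts edgewise as left/right multiplication by Haar-independent group elements. This shows the Jacobian of $\Phi$ (with respect to the relevant Haar measures) is one, and in particular that under $\sigma_{\Lambda,N}$ the three blocks $Q_{\cP_n}$, $(GQ)|_{E_{1,n}}$, $(GQ)|_{E_{2,n}}$ are mutually independent, each product Haar.

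Next I would bring in the Gibbs weight. By Lemma \ref{inv1}, $\sum_{p\in\cP_\Lambda^+}\Tr(Q_p)$ is gauge invariant, hence equals $\sum_{p}\Tr((GQ)_p)$, which depends only on the restriction of $GQ$ to $E'$ (the matrices on $\cP_n$ being identity under $GQ$) — and this restriction is exactly $(\hat Q_1,\hat Q_2)$, with $\hat Q_p$ the plaquette products of $\hat Q$. Since the density of $\mu_{\Lambda,N,\beta}$ with respect to $\sigma_{\Lambda,N}$ is a function of this quantity alone, pushing $\mu_{\Lambda,N,\beta}$ forward by $\Phi$ gives a measure whose density factorizes: it is the product of Haar measure on $\cQ_{\cP_n}$ and the measure on $(\cQ_{E_{1,n}},\cQ_{E_{2,n}})$ with density proportional to $\exp\bigl(N\beta\sum_{p\in\cP_\Lambda^+}\Tr(\hat Q_p)\bigr)$ with respect to product Haar on $E_{1,n}\cup E_{2,n}$. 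This yields both conclusions of the lemma: $\hat Q=(\hat Q_1,\hat Q_2)$ is independent of $Q_{\cP_n}$, and has the stated density.

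The only genuinely delicate point, and the main obstacle, is the topological/combinatorial bookkeeping needed to be sure the change of variables is honestly a bijection in the infinite-volume setup and that the partition $E' = E_{1,n}\sqcup E_{2,n}$ is exactly respected — i.e.\ that every edge not on $\cP_n$ is incident to precisely one of the two components $C_{1,n},C_{2,n}$, so that the Gibbs weight indeed splits as a function of $(\hat Q_1,\hat Q_2)$ with no ``cross'' terms coupling the two sides. This rests on the standard topological fact quoted just before the lemma (that $\cP$, resp.\ $\cP_n$, separates the lattice into two pieces each containing one loop), so once that fact is granted the verification is routine; I would simply remark that a plaquette either lies entirely in $C_{i,n}\cup\cP_n$ for one value of $i$, in which case its boundary edges lie in $E_{i,n}\cup E(\cP_n)$ and $\hat Q_p$ is a function of $\hat Q_i$ alone. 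Everything else is a line-by-line repetition of the proofs of Lemma \ref{conditional} and Lemma \ref{l:pfree}.
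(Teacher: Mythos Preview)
Your proposal is correct and follows essentially the same argument as the paper: the same bijection $\Phi(Q)=(Q_{\cP_n},\hat Q_1,\hat Q_2)$, the same Jacobian-equals-one computation via Haar invariance, and the same appeal to gauge invariance of the Hamiltonian to conclude the density factorizes off $Q_{\cP_n}$. Your final paragraph about plaquettes lying entirely on one side of $\cP_n$ is not actually needed for Lemma~\ref{l:hatdist} itself---it is the content of the subsequent Lemma~\ref{l:dmprop}---but including it does no harm.
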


\begin{proof}
This proof is similar to the proof of Lemma \ref{conditional}. Consider the bijective map $\Phi: \cQ\to (\cQ_{\cP_n}, \cQ_{E_{1,n}}, \cQ_{E_{2,n}})$ defined by
\begin{equation}\label{decoupling1}
\Phi (Q)=(Q_{\cP_n},\hat Q_1,\hat Q_2).
\end{equation}
The fact that $\Phi$ is bijective is easy to check since the inverse map is clear from the definition of the gauge function.
As in the proof of Lemma \ref{conditional}, we observe that if $Q$ is distributed according to product Haar measure then so is $(Q_{\cP_n},\hat Q_1,\hat Q_2)$. To see this notice that  $$\{(GQ)_{(x,y)}\}_{(x,y)\notin E(\cP_n)}=\{G(x)Q(x,y)G(y)^{-1}\}_{(x,y)\notin E(\cP_n)}.$$ When $Q$ is distributed under product Haar measure, $Q(\cP_n)$ is independent of $\{Q(x,y)\}_{(x,y)\notin E(\cP_n)}.$ The proof of the above claim now follows from the fact that $G(\cdot)$ is a deterministic function of $Q(\cP_n)$ and that the Haar measure on $SO(N)$ is invariant under conjugation by any matrix $S \in O(N)$. As before this implies that the Jacobian of the transformation $\Phi$ is $1$. Now arguing as in the proof of Lemma \ref{conditional}, we conclude that $Q_{\cP_n}$ is distributed according to product Haar measure on the edges of $\cP_n$; $\hat Q$ is independent of $Q_{\cP_n}$ and the distribution of $\hat{Q}$  with respect to product Haar measure on the edges of $E'$ has density $$ \exp \biggl( N\beta \sum_{p\in \mathcal{P}_{\Lambda}^{+}} {\rm Tr}(\hat Q_{p})\biggr).$$
\end{proof}

Observe that the following domain Markov property is almost immediate from Lemma \ref{l:hatdist}.

\begin{lem}
\label{l:dmprop}
In the above setting $\hat{Q}_1$ and $\hat{Q}_2$ are independent. 
\end{lem}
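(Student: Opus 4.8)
The plan is to read off the independence directly from the structure of the density exhibited in Lemma \ref{l:hatdist}. The key observation is that the Hamiltonian $N\beta\sum_{p\in\mathcal{P}_\Lambda^+}\Tr(\hat Q_p)$, once the gauge has forced every edge of $\cP_n$ to carry the identity matrix, splits as a sum of two terms, one involving only the matrices $\hat Q_1$ on $E_{1,n}$ and one involving only $\hat Q_2$ on $E_{2,n}$. Indeed, since $\Lambda_n\setminus\cP_n$ has exactly the two connected components $C_{1,n}$ and $C_{2,n}$, every plaquette $p\in\mathcal{P}_\Lambda^+$ has all four of its edges either in $E(\cP_n)$, or in $E_{1,n}$ (if it sits in $C_{1,n}$), or in $E_{2,n}$ (if it sits in $C_{2,n}$); no plaquette straddles the two sides because that would require an edge joining $C_{1,n}$ to $C_{2,n}$, which is impossible as $\cP_n$ separates them. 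For a plaquette entirely on $\cP_n$, $\hat Q_p = \bI$ contributes a constant $N$ to the trace. Hence
\begin{equation}\label{e:split}
N\beta\sum_{p\in\mathcal{P}_\Lambda^+}\Tr(\hat Q_p) = N\beta\!\!\sum_{p\subset C_{1,n}}\!\!\Tr(\hat Q_p) \;+\; N\beta\!\!\sum_{p\subset C_{2,n}}\!\!\Tr(\hat Q_p) \;+\; (\text{const}),
\end{equation}
where the first sum is a function of $\hat Q_1$ alone and the second of $\hat Q_2$ alone.

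The first step is to make precise the claim that no plaquette uses edges from both $E_{1,n}$ and $E_{2,n}$; this is the only genuinely geometric input and it follows because a plaquette is supported on a unit square, whose four edges are mutually adjacent, so they cannot be split between two sets separated by the path $\cP_n$. The second step is to combine \eqref{e:split} with Lemma \ref{l:hatdist}: the reference measure there is product Haar measure over $E_{1,n}\cup E_{2,n}$, which already factorizes as (product Haar on $E_{1,n}$) $\otimes$ (product Haar on $E_{2,n}$), and the Radon--Nikodym density is, by \eqref{e:split}, a product of a function of $\hat Q_1$ and a function of $\hat Q_2$ (the constant factor being absorbed into the normalization). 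A density that factors as $g_1(\hat Q_1)g_2(\hat Q_2)$ against a product reference measure forces $\hat Q_1$ and $\hat Q_2$ to be independent, which is exactly the assertion.

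I do not expect any real obstacle here; the lemma is essentially a bookkeeping consequence of Lemma \ref{l:hatdist} together with the separation property of $\cP_n$. The one point that deserves a sentence of care is the handling of plaquettes lying on $\cP_n$ itself: one should note that such a plaquette (if any exist) has all edges set to $\bI$ after gauge fixing, so it contributes the constant $N$ and hence only the overall normalizing constant, not the dependence structure. With that remark in place the factorization \eqref{e:split} is complete and the conclusion is immediate.
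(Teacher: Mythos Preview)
Your argument is correct and is essentially the paper's own proof, which simply observes that for each plaquette the edges are disjoint from at least one of $E_{1,n}$ and $E_{2,n}$, so the density in Lemma \ref{l:hatdist} factorises. Two minor remarks: your trichotomy is slightly overstated (a plaquette adjacent to $\cP_n$ can have edges in both $E(\cP_n)$ and one $E_{i,n}$, though the $\cP_n$-edges carry identity so this is harmless), and since $\cP_n$ is a simple path no plaquette actually has all four edges on $\cP_n$, so that case is vacuous.
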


\begin{proof}
This is immediate from observing that each for each plaquette $p$, the edges in $p$ either is disjoint from $E_{1,n}$ or is disjoint from $E_{2,n}$ and hence the density in Lemma \ref{l:hatdist} factorises. 
\end{proof}

For a loop $\ell$, as before let us denote by $\hat{Q}_{\ell}$ the product of matrices from the configuration $\hat{Q}$ along the edges of $\ell$. To prove the asymptotic freeness in Proposition \ref{free120} we shall again invoke Theorem \ref{free}. To this end, we need the following lemma.


\begin{lem} 
\label{l:inv2}
For $i=1,2$ and any deterministic matrix $S\in O(N)$ the law of $S\hat Q_{\ell_i} S^{-1}$ is the same as that of $\hat Q_{i}.$
\end{lem}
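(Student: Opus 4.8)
The plan is to show that the law of $\hat{Q}_{\ell_i}$ is invariant under conjugation by an arbitrary deterministic orthogonal matrix $S$, by exhibiting an explicit measure-preserving change of variables on the configuration space that realizes this conjugation. The key point is that $\ell_i$ lies entirely in $C_{i,n}\cup\cP_n$, so $\hat{Q}_{\ell_i}$ is a word in the matrices $\{\hat{Q}_e\}_{e\in E_{i,n}}$ only (the $\cP_n$-edges contribute identity), and by Lemma~\ref{l:dmprop} this block of variables is independent of the $E_{3-i,n}$-block. So it suffices to work within the component $C_{i,n}$.

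First I would choose a convenient gauge representative inside $C_{i,n}$. Fix a vertex $x_0$ on $\ell_i$ and build a spanning tree $T$ of the subgraph on $C_{i,n}\cup\cP_n$ rooted at $x_0$; define a further gauge function $H$ that equals the identity along $\cP_n$ (so as not to disturb what Lemma~\ref{l:hatdist} already arranged) and propagates $\hat{Q}$-matrices along $T$, forcing all tree-edges of $T$ to identity. By Lemma~\ref{inv1} the trace $\operatorname{Tr}(\hat{Q}_{\ell_i})$ and all plaquette traces are unchanged, and by the argument in Lemma~\ref{l:hatdist} (invariance of product Haar measure under multiplication by a deterministic orthogonal matrix, with Jacobian one) the resulting configuration $H\hat{Q}$ restricted to the non-tree edges of $C_{i,n}$ still has density proportional to $\exp(N\beta\sum_p\operatorname{Tr}((H\hat{Q})_p))$ against product Haar. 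In this gauge, $\hat{Q}_{\ell_i}=(H\hat{Q})_{\ell_i}$ becomes the product of just the non-tree edges of $\ell_i$ in order; moreover, because $\ell_i$ is a simple loop and $T$ is a spanning tree, $\ell_i$ contains exactly one non-tree edge $e_*$, so $(H\hat{Q})_{\ell_i}=(H\hat{Q})_{e_*}^{\pm1}$ up to conjugation, and hence equals a single matrix that is Haar-conditioned-on-a-plaquette-Hamiltonian in distribution.

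Next I would perform the conjugation change of variables. With $\hat{Q}_{\ell_i}$ reduced (in the new gauge) to essentially a single $SO(N)$-matrix variable $R$, consider the map $R\mapsto SRS^{-1}$ while leaving all other free edge-variables fixed; on $SO(N)$ equipped with Haar measure this map is measure preserving with Jacobian one (exactly as in the first bullet of the proof of Lemma~\ref{varchange}, using that $SO(N)\trianglelefteq O(N)$ and Haar on $O(N)$ is conjugation-invariant). The Hamiltonian $\sum_p\operatorname{Tr}$ is a sum of plaquette traces; each plaquette trace either does not involve $e_*$ at all, or involves it through conjugation-invariant combinations, so the density is unchanged under $R\mapsto SRS^{-1}$ — and therefore the law of $R=\hat{Q}_{\ell_i}$ equals that of $S\hat{Q}_{\ell_i}S^{-1}=S\hat{Q}_iS^{-1}\ldots$ wait, more precisely it shows $S\hat{Q}_{\ell_i}S^{-1}\stackrel{\mathrm{law}}{=}\hat{Q}_{\ell_i}$, which by definition is $\hat{Q}_i$ in the relevant sense. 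Finally I would undo the auxiliary gauge $H$: since $H$ only conjugates by a deterministic-given-$Q_{\cP_n}$ matrix and $S$ is deterministic, distributional identities transfer back.

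The main obstacle I anticipate is the bookkeeping needed to guarantee that, after gauge fixing, $\hat{Q}_{\ell_i}$ genuinely reduces to a single free matrix variable (or at least to a block on which the conjugation acts cleanly), and that the remaining plaquette Hamiltonian depends on that variable only through conjugation-invariant quantities. If a simple loop picks up more than one non-tree edge — e.g.\ because $\ell_i$ wraps around a hole formed by $\cP_n$ in $\Lambda_n$ — one must instead track how $S$-conjugation acts on the ordered product of those edge-matrices and argue that it still factors as a single conjugation of $\hat{Q}_{\ell_i}$; this is true but requires care in choosing the spanning tree (or the bi-infinite path $\cP$) so that $C_{i,n}$ is simply connected, which is exactly what the topological fact preceding Lemma~\ref{l:hatdist} provides. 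Modulo that, the argument is a direct analogue of Lemma~\ref{varchange}.
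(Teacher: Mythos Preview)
Your argument breaks at the step ``each plaquette trace either does not involve $e_*$ at all, or involves it through conjugation-invariant combinations.'' This is false. After your spanning-tree gauge, at least one plaquette $p$ adjacent to $e_*$ must contain a second non-tree edge: if \emph{both} plaquettes adjacent to $e_*$ had their remaining three edges in the tree $T$, then $T$ would contain two distinct length-$3$ paths between the endpoints of $e_*$, contradicting uniqueness of paths in a tree. For such $p$ one has $\operatorname{Tr}((H\hat Q)_p)=\operatorname{Tr}(RA)$ with $A$ involving another free variable, and $\operatorname{Tr}(SRS^{-1}A)\neq\operatorname{Tr}(RA)$ in general. Hence the map $R\mapsto SRS^{-1}$ with all other non-tree variables held fixed does \emph{not} preserve the Gibbs density, and the change-of-variables step fails. (Separately, ``$\ell_i$ simple and $T$ spanning'' does not force $\ell_i$ to meet $T$ in all but one edge; you must \emph{choose} $T$ to contain a spanning path of $\ell_i$.)

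The paper's proof avoids this by acting at a \emph{vertex} rather than an edge: it simply replaces the gauge value at the starting point $x_1$ of $\ell_i$ by $S$, defining $G'$ with $G'(x_1)=S$ instead of $G(x_1)=\mathbb{I}$ (with a minor variant when $x_1\in\cP_n$). Since this is an honest gauge transformation, Lemma~\ref{inv1} guarantees every plaquette trace is preserved, so the density is unchanged, while the loop variable becomes $G'Q_{\ell_i}=S\hat Q_{\ell_i}S^{-1}$; the Jacobian-one argument of Lemma~\ref{l:hatdist} then gives the equality in law directly. A fix closer in spirit to your attempt is to conjugate \emph{every} edge variable by $S$ simultaneously (the constant gauge $x\mapsto S$): this sends each $\hat Q_p$ to $S\hat Q_pS^{-1}$, hence preserves all plaquette traces and product Haar measure, and sends $\hat Q_{\ell_i}$ to $S\hat Q_{\ell_i}S^{-1}$ --- with no spanning tree needed at all.
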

\begin{proof} 
We will only discuss the case $i=1$ since the details for the other case are similar.  
The proof for $i=1$ will be split into two cases:
\begin{enumerate} 
\item
$x_1,$ the starting point of $\ell_1$ in on $\cP_n$. 
 \item $x_1,$ the starting point of $\ell_1$ in not on $\cP_n$. 
 \end{enumerate}
In both cases we slightly modify the gauge function $G:=G_{\cP_n}$ for our purpose. 
 
 Case (1). Choose $a,b$ such that the isomorphism $\theta$ between $[-a,b]\cap \Z$ and $\cP_n$ satisfies $\theta(0)=x_1$. Choose $G'(\theta(0))=S$, and define the rest of the Gauge function $G'$ exactly the same as the definition of $G$ above. Recall we chose $G(0)=\mathbb{I}$ before. We denote the bijective map analogous to the one in the proof of Lemma \ref{l:hatdist} by $\Phi'$. Thus we get $$\Phi' (Q)=(Q_{\cP_n}, Q'_1, Q'_2)$$ 
where for $i=1,2$, we take $ Q'_i$ to be the restriction of the configuration $G'Q$ to the edges of $E_{i,n}$. Following the same arguments as in the proof of Lemma \ref{l:hatdist} and Lemma \ref{l:dmprop} we conclude that  $Q'_1, Q'_2$  are independent of each other and $Q_{\cP_n}$ and have the same law as $\hat Q_1, \hat Q_2$. Now notice that $Q'_{\ell_1}=SQ_{\ell_1}S^{-1}$ while $\hat Q_{\ell_1}=Q_{\ell_1}$ (this is where we need $x_1$ to map to $0$.)
Hence $Q'_{\ell_1}=S \hat Q_{\ell} S^{-1}.$ Thus we are done from the aforesaid equality of law of $\hat Q_1$ and $Q'_{1}.$

Case (2). In this case we define $G'$ to be equal to $G$ everywhere except at $x_1$ where we define $G'(x_1)=S.$ Recall that $G(x_1)=\mathbb{I}.$ The steps of the proof in this case are now verbatim as in Case (1). 
\end{proof}

Combining Lemma \ref{l:inv2} and Lemma \ref{l:dmprop} gives us the following.

\begin{lem}
\label{free200}
The matrices  
$\hat Q_{\ell_1}$ and $\hat Q_{\ell_2}$ are asymptotically free.
\end{lem}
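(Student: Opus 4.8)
The plan is to derive Lemma \ref{free200} directly from Theorem \ref{free}, by checking that the pair $(\hat Q_{\ell_1},\hat Q_{\ell_2})$ satisfies the three hypotheses of that theorem: the two matrices are independent, each is orthogonally invariant, and each converges in the sense of Definition \ref{conv1}. First I would record the structural observation that $\hat Q_{\ell_i}$ is a function of $\hat Q_i$ alone: by the choice of the separating path $\mathcal{P}$ we have $\ell_i\subset C_{i,n}\cup\mathcal{P}_n$, so every edge of $\ell_i$ lies in $E_{i,n}\cup E(\mathcal{P}_n)$, and since $\hat Q$ is the identity on the edges of $\mathcal{P}_n$ the product $\hat Q_{\ell_i}$ depends only on the restriction of $\hat Q$ to $E_{i,n}$. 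Combined with Lemma \ref{l:dmprop} (independence of $\hat Q_1$ and $\hat Q_2$), this gives independence of $\hat Q_{\ell_1}$ and $\hat Q_{\ell_2}$.

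Next, orthogonal invariance of each $\hat Q_{\ell_i}$ is exactly the content of Lemma \ref{l:inv2}: for every deterministic $S\in O(N)$ the matrix $S\hat Q_{\ell_i}S^{-1}$ has the same law as $\hat Q_{\ell_i}$. For the remaining hypothesis I would argue that $\{\hat Q_{\ell_i}\}_{N}$ converges. For $k\in\N$, write $\ell_i^{(k)}$ for the closed walk obtained by traversing $\ell_i$ exactly $k$ times; since $\hat Q$ agrees with the gauge-transformed configuration on $E_{i,n}$ and is the identity on $E(\mathcal{P}_n)$, the gauge-invariance of traces along loops (Lemma \ref{inv1}) yields $\Tr(\hat Q_{\ell_i}^{k})=\Tr(Q_{\ell_i^{(k)}})$, hence
$$\frac{1}{N}\,\E\bigl[\Tr(\hat Q_{\ell_i}^{k})\bigr]=\frac{\langle W_{\ell_i^{(k)}}\rangle}{N}\longrightarrow w(\ell_i^{(k)},\beta)$$
as $N\to\infty$ along $\Lambda_n\uparrow\Z^2$, by Theorem \ref{t:looplimit} for $|\beta|<\beta_0$. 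Thus the marginal of each $\hat Q_{\ell_i}$ converges, and Theorem \ref{free} then implies that the algebras generated by $\hat Q_{\ell_1}$ and $\hat Q_{\ell_2}$ are asymptotically free, which is the assertion of the lemma.

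I do not expect a genuine analytic obstacle here: the lemma is essentially a bookkeeping step assembling Lemmas \ref{l:dmprop} and \ref{l:inv2}, Theorem \ref{t:looplimit}, and Theorem \ref{free}. The one point warranting a little care is the very first observation, namely that $\hat Q_{\ell_i}$ really factors through $\hat Q_i$; this rests on the topological fact stated just before Lemma \ref{l:hatdist} (that $\mathcal{P}_n$ separates $\Lambda_n$ into $C_{1,n}$ and $C_{2,n}$ with $\ell_i\subset C_{i,n}\cup\mathcal{P}_n$) together with the gauge fixing forcing the matrices on the edges of $\mathcal{P}_n$ to be the identity, so that those edges contribute nothing to $\hat Q_{\ell_i}$.
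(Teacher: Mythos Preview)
Your proposal is correct and follows essentially the same approach as the paper: verify convergence of each $\hat Q_{\ell_i}$ via Theorem \ref{t:looplimit} and trace invariance under the gauge, then invoke Lemma \ref{l:dmprop}, Lemma \ref{l:inv2}, and Theorem \ref{free}. You have simply made explicit the intermediate observation (that $\hat Q_{\ell_i}$ depends only on $\hat Q_i$) which the paper leaves implicit.
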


\begin{proof}
Note that both $\hat Q_{\ell_1}$ and $\hat Q_{\ell_2}$ converge in the sense of Definition  \ref{conv1} as the same is true for $Q_{\ell_1}$ and $Q_{\ell_2}$ by Theorem \ref{t:looplimit} and the fact that trace is invariant under conjugation. Now using Lemma \ref{l:dmprop}, Lemma \ref{l:inv2} and Theorem \ref{free} we conclude that they are asymptotically free.
\end{proof}

%
%


Finally we are ready to prove Proposition \ref{free120}.

\begin{proof}[Proof of Proposition \ref{free120}]
Note that under the coupling of the configuration $Q$ and $\hat Q$ through the map $\Phi,$
\begin{align*}
(\hat Q_{\ell_1},\hat Q_{\ell_2})&=(G(x_1)Q_{\ell_1}G^{-1}(x_1),G(x_2) Q_{\ell_2}G^{-1}(x_2)), \mbox{ and hence,}\\
(Q_{\ell_1},Q_{\ell_2})&=(G^{-1}(x_1)\hat Q_{\ell_1}G(x_1),G^{-1}(x_2)\hat Q_{\ell_2}G(x_2)),
\end{align*}
 where $x_1$ and $x_2$ are the starting points of $\ell_1$ and $\ell_2$ respectively.  
Since $G(\cdot)$ is a deterministic function of  $Q_{\cP_n}$, by Lemma \ref{l:hatdist}, the aforesaid orthogonal invariance and independence of $\hat Q_{\ell_1}$ and $\hat Q_{\ell_2}$, we see that the same is true for $ Q_{\ell_1}$ and $ Q_{\ell_2}$ as well.
The remaining part of the proof is the same as that for Lemma \ref{free200}.
\end{proof}

We are now in a position to compute the limiting loop expectations for any simple loop and conclude the proof of Theorem \ref{t:simple}.

%
%
%
%

\begin{proof}[Proof of Theorem \ref{t:simple}]
We prove this by induction on the area of the simple loop $\ell$. For simple loops of area one, i.e.  plaquettes, this is the content of Theorem \ref{t:plaquette}.
Fix a simple loop $\ell$ with area larger than one, and assume that we have established the result for all simple loops of smaller area. Let $e_1=((x_1+1,x_2),(x_1,x_2))$ be the edge in $\ell$ such that $\ell$ does not intersect the vertical line $X=x_1-1$ and the part of line $X=x_1$  below the line $Y=x_2$ i.e., it is the bottom most among the leftmost edges in $\ell.$ 
By Remark \ref{inversion} we assume without loss of generality that $\ell$ is in clockwise orientation.
Let us now start the loop from the point $(x_1+1,x_2)$. See Figure \ref{f:epe2} and consider the plaquette $p_0=e_1e_2e_3e_4$ as depicted in the figure.   
Thus $\ell=e_1e_2\ell_1$ where $\ell_1$ is a simple path from $(x_1,x_2+1)$ to $(x_1+1,x_2). $ Now we consider the following two cases:

\begin{enumerate}
\item $\ell_1$ does not intersect $(x_1+1,x_2+1).$
\item $\ell_1$ intersects $(x_1+1,x_2+1).$ In this case let $\ell_1=\ell_2\ell_3$ where the end point of $\ell_2$ and the beginning point of is $(x_1+1,x_2+1).$ Moreover notice that since $\ell_1$ is simple, so are $\ell_2$ and $\ell_3.$
\end{enumerate}

Note that in the first case 
\begin{align}\label{case1}
Q_{\ell}&=Q_{p_0}Q^{-1}_{e_4}Q^{-1}_{e_3}Q_{\ell_1},\\
\nonumber
&=Q_{p_0}Q_{\ell'},
\end{align}
where $\ell'=e_4^{-1}e_3^{-1}\ell_1$ is a simple loop.

In the second case 
\begin{align}\label{case2}
Q_{\ell}&=Q_{p_0}Q^{-1}_{e_4}\left(Q^{-1}_{e_3}Q_{\ell_2}\right)\left(Q_{\ell_3}Q^{-1}_{e_4}\right) Q_{e_4},\\
\nonumber
&=Q_{p_0}Q^{-1}_{e_4}Q_{\hat \ell}Q_{\tilde \ell}Q_{e_4},
\end{align}
where $\hat \ell=e_3^{-1}\ell_2$ and $\tilde \ell=\ell_3e_4^{-1}$ are respective simple loops. We shall apply the axial Gauge fixing introduced in Section \ref{gf}. By a simple translation we assume  $(x_1+1,x_2+1)=0.$ Recall the configuration space $\tilde{\cQ}$ and the measure $\tilde{\mu}$ from Section \ref{gf}. Using the same notation as before observe that 
$$\E_{\mu}(\Tr Q_{\ell})= \E_{\tilde \mu}(\Tr \tilde Q_{\ell})= \E_{\tilde \mu}(\Tr \tilde Q_{p_0}\tilde Q_{\ell'})$$
in case 1 and 
$$\E_{\mu}(\Tr Q_{\ell})= \E_{\tilde \mu}(\Tr \tilde Q_{\ell})= \E_{\tilde \mu}(\Tr \tilde Q_{p_0}\tilde Q_{\hat \ell} \tilde Q _{\tilde \ell})$$in case 2. Observe that the component loops are of all simple smaller area. It is also easy to observe that ${\rm area}(\ell)= 1+{\rm area}(\ell')$ (in case 1) and ${\rm area}(\ell)= 1+{\rm area}(\hat \ell)+ {\rm area}(\tilde \ell)$ (in case 2). An application of asymptotic freeness together with induction will complete the proof. 

\begin{figure}[ht]
\centering
\includegraphics[width=.25\textwidth]{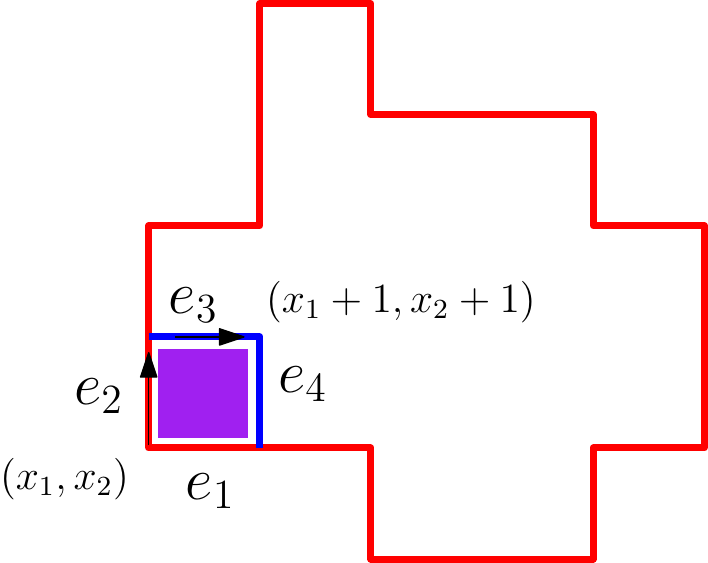}
\caption{The inductive step in the proof of Theorem \ref{t:simple}.}
\label{f:epe2}
\end{figure}
Formally we need to argue separately in the two cases. 
\begin{itemize}
\item Case (1). This case is more straightforward, and just the asymptotic freeness of plaquette variables from Section \ref{fa} suffices. Recall from \eqref{inversion1} we can write $\tilde Q_{\ell'}$ as product of plaquettes disjoint from $p_0$:
$$\tilde Q_{\ell'}=\prod_{i=1}^k \tilde Q^{\epsilon_i}_{p_i}$$ for some $k,$ $p_1,\ldots, p_k$ and $\epsilon_1,\ldots, \epsilon_k \in \{1,-1\},$ where $p_i\neq p_0$ for all $i=1,\ldots, k.$

Recall the trace functional $\phi$ and algebra of plaquette variables $\cF$ from Proposition \ref{con1}. Define $q_{\ell}$ and $q_{\ell'}$ to be the natural element in the algebra $\cF$ corresponding to the matrices $\tilde Q_{\ell}$ and $\tilde{Q}_{\ell'}$ respectively. Using Proposition \ref{con1} it now follows
\begin{align*}\phi (q_\ell)= \phi(q_{p_0})\phi(q_{\ell'}) 
\end{align*}
\item Case (2).
Arguing as in case (1) we obtain 
\begin{align*}\phi (q_\ell)= \phi(q_p)\phi(q_{\hat\ell}q_{\tilde\ell})
\end{align*}
Now note that $\hat \ell \tilde \ell$ is itself a loop which is not  quite simple but is a concatenation of two simple loops $\hat \ell$ and $\tilde \ell$ both simple and starting from $(0,0)$ and disjoint otherwise and satisfies the hypothesis of  Proposition \ref{free120}. Using Proposition \ref{free120} we get $$\phi(q_{\hat \ell}q_{\tilde \ell})=\phi(q_{\hat \ell})\phi(q_{\tilde \ell}).$$
\end{itemize}
At this point we notice that in the first case $\rm{area}(\ell')=\rm{area}(\ell)-1$ whereas in the second case $\rm{area}(\hat \ell)+\rm{area}(\tilde \ell)=\rm{area}(\ell)-1.$ Thus in both cases we can finish the proof by induction on the area of the loop. 
%
%
%
%
%
\end{proof}

\section{Some exact computations using asymptotic freeness.}
\label{s:comp}
In this section using the tools from free probability theory to compute certain loop statistics which seem to not easily follow from \eqref{rec10}. The proof technique will allow us to prove Proposition \ref{p:lbcounterex} in any dimension. However for readability's sake we first treat the planar case. 

We consider $\F_2,$ the free group on two generators. Let the generators be $a,b.$  Notice  the canonical correspondence between $\F_2$ and a set of loops in $\Z^2,$ formed by wrapping around  two adjacent plaquettes $a,b$ incident on the edge $e_1,$ various times (see Figure \ref{loop2}).

Consider loops of the form $\ell_{k}=aba^{-1}b^{-1} \underset{k \text{ times}}{\ldots} aba^{-1}b^{-1}$. Recall the coefficients $a_{j}(\cdot)$ from \eqref{e:series}. The following proposition computes some of them, for the loops $\ell_{k}$. 

\begin{ppn}
\label{l1} 
For any $k$ and the loop $\ell_{k}$ described above,
\begin{itemize}
\item for any $j<k$ or $j>4k,$ $a_j(\ell_k)=0;$
\item $a_{4k}=-C_{2k-1}$ where $C_k$ is the $k^{th}$ Catalan number.
\end{itemize}
\end{ppn}

Note that, as argued in Section \ref{s:area}, ${\rm area}(\ell_k)=0$. Thus Proposition \ref{p:lbcounterex} for the planar case follows directly from the above results together with the following bound on the rate of growth of the coefficients given in Lemma 10.1 of \cite{Cha15}. 

\begin{figure}[h]
\centering
\includegraphics[width=.15\textwidth]{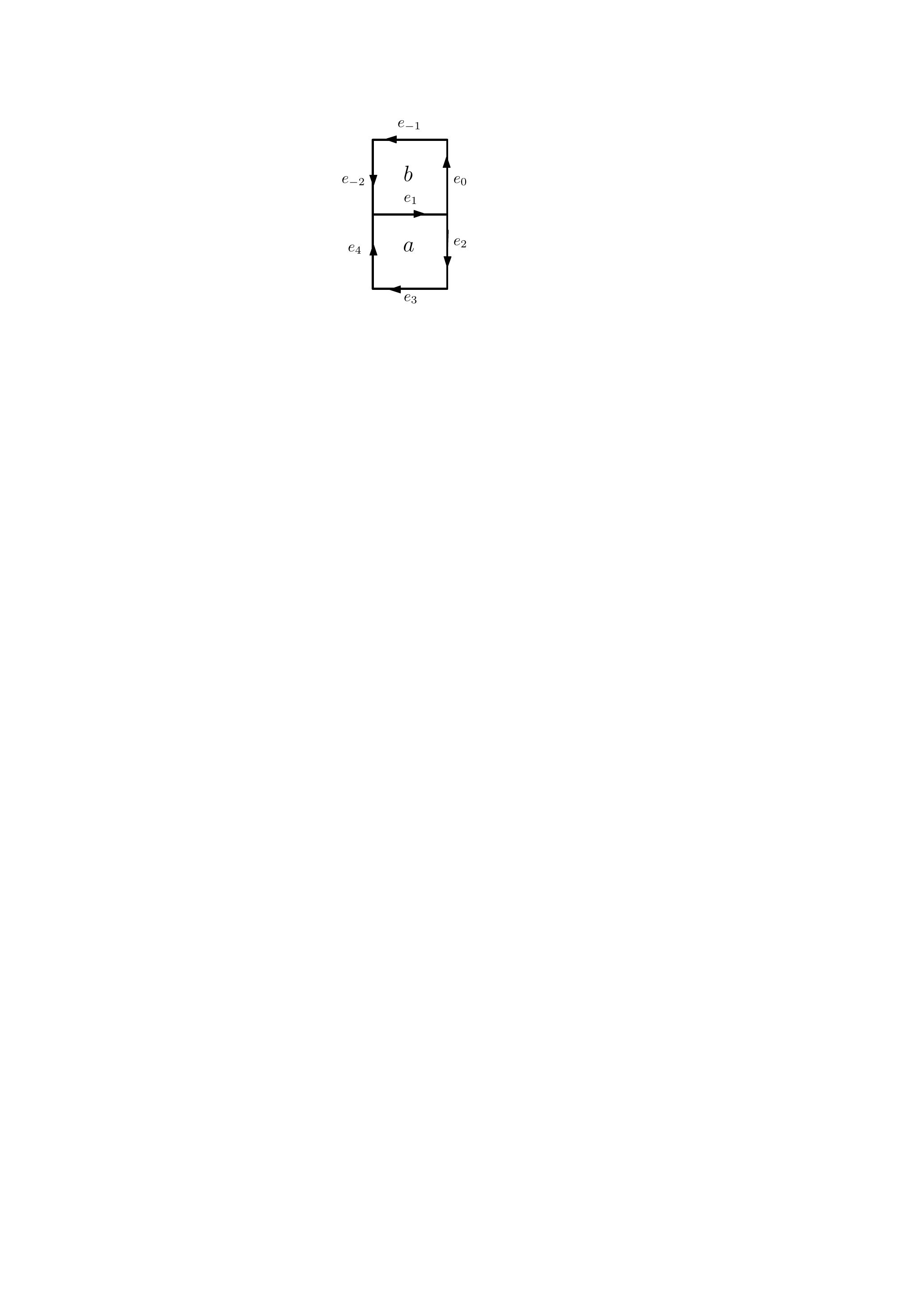}
\caption{{We call the oriented plaquette $e_1e_2e_3e_4$ as $a$ and the plaquette} $e_1e_0e_{-1}e_{-2}$ as $b.$} 
\label{loop2}
\end{figure}
\begin{lem}
\label{growthrate100}\cite[Lemma 10.1]{Cha15} 
There exists a constant $C>0$ such that for any $j\ge 0,$  $$a_j(\ell_k) \le C^{5j}(4C)^{4k}.$$
\end{lem}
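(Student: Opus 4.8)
The plan is to derive Lemma \ref{growthrate100} (which is \cite[Lemma 10.1]{Cha15}) from a uniform growth estimate of the form
\[
|a_j(\ell)|\ \le\ A^{\,j}\,B^{\,L(\ell)}\qquad(j\ge 0,\ \ell\text{ any loop}),
\]
where $L(\ell)$ denotes the number of edges of $\ell$ and $A,B$ are constants depending only on the dimension. Granting this, I would specialise to $\ell=\ell_k$: it is $k$ blocks $aba^{-1}b^{-1}$, each of $16$ edges, so $L(\ell_k)\le 16k$ and hence $a_j(\ell_k)\le|a_j(\ell_k)|\le A^{\,j}B^{\,16k}$, which is of the asserted shape $C^{5j}(4C)^{4k}$ as soon as $C$ is picked with $C^{5}\ge A$ and $4C\ge B^{4}$.

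To obtain the uniform estimate I would follow the argument by which Chatterjee establishes convergence of the power series $w(\ell,\beta)=\sum_j a_j(\ell)\beta^j$ in the strong coupling regime (Theorem \ref{t:looplimit}); see \cite[Sections 4 and 11]{Cha15}, following the approach of \cite{CGM09}. One views $\ell\mapsto w(\ell,\cdot)$ as the unique fixed point of the operator built from the fundamental recursion \eqref{rec10}, acting on a Banach space of families of formal power series in $\beta$ equipped with a norm that weights each loop geometrically in its length -- schematically $\|f\|=\sum_\ell t^{L(\ell)}\sum_j|f_j(\ell)|\,\beta_1^{\,j}$ for suitably small $t>0$ and $\beta_1>0$. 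Three properties of the operations of Section \ref{cst} make this run: first, by Theorem \ref{t:factor} the splitting contributions are genuine products $w(\ell';\cdot)\,w(\ell'';\cdot)$ with $L(\ell'),L(\ell'')<L(\ell)$ and $L(\ell')+L(\ell'')\le L(\ell)$, so the weighted norm controls them multiplicatively; second, in dimension $d$ every edge lies in exactly $2(d-1)$ positively oriented plaquettes, so the normalised double sum $\frac1m\sum_{x\in C}\sum_{p\in\mathcal P^{+}(e)}$ in \eqref{rec10} is an average over only $O(1)$ plaquettes; third, each deformation lengthens a loop by at most a bounded number of edges and carries an explicit extra factor $\beta$. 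For $t$ and $\beta_1$ small enough the resulting map is a contraction, its fixed point is dominated by $B^{L(\ell)}$ times a single power series with positive radius of convergence, and -- since putting absolute values into \eqref{rec10} only enlarges its right-hand side -- the same bound governs $|a_j(\ell)|$, giving the displayed estimate.

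The hard part will be verifying the contraction, and this is where all the work lies. A single use of \eqref{rec10} already produces $O(L(\ell))$ splitting terms, so iterating it naively multiplies the number of terms at a rate that is polynomial, not bounded, in the loop length; the length-weight $t^{L(\ell)}$ is precisely what absorbs this, because every loop created by a splitting is strictly shorter and hence picks up a definite geometric gain that, for $t$ small, beats the polynomial count, while the deformation terms -- which can enlarge a loop -- are held in check by the factor $\beta$ once $\beta_1$ is small. Getting these two effects to balance is exactly what pins down the strong-coupling threshold; the detailed bookkeeping is carried out in \cite[Section 11]{Cha15}, and re-running it with $|a_j(\ell)|$ in place of $a_j(\ell)$ is routine, since the triangle inequality only strengthens every inequality in the iteration.
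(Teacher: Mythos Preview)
The paper does not give its own proof of this lemma: it is quoted verbatim as \cite[Lemma 10.1]{Cha15} and used as a black box. So there is nothing in the paper to compare your argument against; you are effectively sketching Chatterjee's proof, and your outline (weighted-norm contraction on families of power series, with deformation terms controlled by the extra $\beta$ and splitting terms by the geometric length weight) is broadly the right picture. The specialisation step is fine: $\ell_k$ has $16k$ edges, so a general bound $|a_j(\ell)|\le A^jB^{L(\ell)}$ gives $A^jB^{16k}$, which is absorbed into $C^{5j}(4C)^{4k}$ for large $C$.

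One genuine architectural point to flag. You invoke Theorem~\ref{t:factor} to replace the splitting contributions $a_k(\times^1_{x,y}\ell,\times^2_{x,y}\ell)$ in \eqref{rec10} by products $w(\ell')w(\ell'')$, and then run the contraction on single-loop data. In Chatterjee's actual argument the order is reversed: one works throughout with \emph{loop sequences} $s=(\ell_1,\dots,\ell_n)$, the recursion for $a_k(s)$ stays inside this category (splitting increases the number of components by one, merger decreases it, there are also twisting terms), and the coefficient bound is established for all $s$ simultaneously. The factorisation $w(\ell_1,\dots,\ell_n)=\prod_i w(\ell_i)$ is then a \emph{consequence} of the convergence so obtained. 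Using factorisation as an input, as you do, is circular if one is proving the bounds from scratch. To make your sketch into a proof, replace ``single loops plus factorisation'' by ``loop sequences and the full recursion''; the contraction then goes through exactly as you describe, and this is what \cite[Section 10]{Cha15} does.
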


Some simple variants of the loop $\ell_k$ can be used to provide other examples of loops with nonzero area. See Remark \ref{generalarea}.

Let $\cW \in \F_2$ be a set of words formed by finite prefixes of the following ``infinite word", $$aba^{-1}b^{-1}aba^{-1}b^{-1}\ldots$$ and its automorphic images. The first part of the Proposition \ref{l1} is a consequence of the following combinatorial lemma. 

\begin{lem}
\label{min}
For any word $w\in \cW$ of length $n,$ any $\pi\in NC(n)$ with the property that all the parts of $\pi$ are singletons or pairs $(a,a^{-1}), (a^{-1},a), (b,b^{-1})(b^{-1},b)$, must have at least $n/4$ singleton parts. 
\end{lem}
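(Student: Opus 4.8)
The plan is to argue combinatorially about the structure of words $w \in \cW$. First I would record the key structural feature: a word $w$ of length $n$ arising as a prefix of $aba^{-1}b^{-1}aba^{-1}b^{-1}\cdots$ (or an automorphic image) has the property that any four cyclically consecutive letters contain all of $a, a^{-1}, b, b^{-1}$ exactly once, up to the position where $w$ is truncated. In particular, among any block of four consecutive positions, there is at most one pair of positions $(i,i+1)$ (or more generally two positions in the block) whose letters form one of the allowed pairing patterns $(a,a^{-1}), (a^{-1},a), (b,b^{-1}), (b^{-1},b)$ — indeed in such a word, $a$ is never immediately followed by $a^{-1}$, $b$ is never immediately followed by $b^{-1}$, etc., so the allowed pairs are never \emph{adjacent} positions, which forces a non-crossing partition using only such pairs to be very sparse.

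The main step is a counting/charging argument. Let $\pi \in NC(n)$ have all parts either singletons or one of the four allowed two-element blocks. Let $s$ be the number of singletons and $d$ the number of pairs, so $s + 2d = n$. I want to show $s \ge n/4$, equivalently $d \le 3n/8$, equivalently $2d \le 3s$ — wait, more directly: $s \ge n/4 \iff n - 2d \ge n/4 \iff d \le 3n/8$. I would prove this by showing that each allowed pair $\{i,j\}$ in $\pi$, together with the non-crossing constraint, ``uses up'' enough of the word. The cleanest route: since the pattern $aba^{-1}b^{-1}$ repeats, a pair $\{i,j\}$ with matching letters (say both slots holding $a$ and $a^{-1}$) must have $|i-j|$ odd and at least $2$; moreover between $i$ and $j$ the letters must themselves be pairable (by non-crossingness, the block strictly between $i$ and $j$ is itself partitioned by $\pi$). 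Using the periodicity, I would set up an induction on $n$: locate an innermost pair (a pair $\{i,i+1+2t\}$ enclosing only singletons or nested pairs), observe that the two endpoints plus the enclosed material, because of the rigid $abab$-structure, must contain a controlled number of singletons, and peel it off.

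Alternatively — and this may be the slicker execution — I would observe directly that if $\pi$ uses only the four allowed pair types, then reading $w$ and deleting the two positions of each pair \emph{does not} collapse to the empty word the way it would for a genuine relator; because of the $aba^{-1}b^{-1}$ shape, deleting a matched $(a,a^{-1})$ pair leaves behind letters that cannot all be subsequently paired, and one quantifies exactly how many must remain as singletons. I would formalize ``how many must remain'' via the exponent-sum / winding obstruction: in any length-$4m$ window the $a$-exponent-sum is $0$ but the \emph{number of sign changes} in the $a$-subsequence is maximal, and each allowed pair can absorb at most a bounded amount of this, giving the $n/4$ bound.

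The hardest part will be making the charging rigorous in the non-crossing setting, i.e.\ showing that the non-crossing constraint, combined with the strict periodicity, really does force the claimed density of singletons rather than merely suggesting it; the subtlety is handling automorphic images of the infinite word and truncations at arbitrary points, where the ``local window'' argument needs care near the endpoints of $w$. I expect that a careful induction peeling off outermost blocks of $\pi$, tracking at each stage how the remaining word is still a subword of a periodic word of the same type, will close this gap.
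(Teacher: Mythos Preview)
Your proposal is not a proof but a collection of plausible strategies, and you yourself flag that the charging step is the hard part that remains unexecuted. In fact the specific routes you sketch run into trouble. For the ``innermost pair'' peeling: if $\{i,j\}$ is an innermost pair in the interior of $w$ and you delete the block $[i,j]$, the concatenation of $[1,i-1]$ with $[j+1,n]$ is generally \emph{not} a word in $\cW$ (for instance, in $aba^{-1}b^{-1}aba^{-1}b^{-1}$, deleting positions $1$--$3$ leaves $b^{-1}aba^{-1}b^{-1}$, fine; but deleting positions $5$--$7$ leaves $aba^{-1}b^{-1}\,b^{-1}$, which has a repeated letter and is not a prefix of any automorphic image). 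So the induction hypothesis does not apply to the leftover, and the ``peel and recurse'' idea stalls unless you replace concatenation by something else. Your alternative ``exponent-sum / sign-change'' heuristic is suggestive but, as written, has no mechanism for actually producing the bound $n/4$ rather than, say, $n/8$.

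The paper's argument is a short induction keyed on the \emph{first} position rather than an innermost block, and this is exactly what makes the recursion close. Either position $1$ is a singleton (then delete it: the suffix $w'$ of length $n-1$ is again in $\cW$, and $1 + (n-1)/4 \ge n/4$), or position $1$ is paired with some position $t$. Non-crossingness then splits $\pi$ into its restrictions to $[1,t]$ and $[t+1,n]$, and the crucial point is that \emph{both of these intervals are themselves words in $\cW$} (prefixes of automorphic images), so induction applies to each. The only boundary case is $t=n$, which forces $n \equiv 3 \pmod 4$; one then applies induction to $[2,n-1]$ and uses integrality to round $(n-2)/4$ up to $(n+1)/4$. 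The takeaway: anchor the induction at an endpoint so that the pieces you produce are genuine contiguous subwords, rather than trying to excise interior blocks.
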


\begin{proof}The proof follows by induction. The base case is easy to check. Let $n=4k+i$ where $i=0,1,2,3,$ and $$w=aba^{-1}b^{-1}aba^{-1}b^{-1}\ldots$$ have length $n.$ Consider any $\pi$ as in the statement of the lemma. Either the first $a$ is matched or it is a singleton. In the latter case the total number of singletons is one more than that coming from $\pi$ restricted  the last $n-1$ elements and hence induction can be applied. If the first $a$ is matched to some $a^{-1}$ at the $t^{th}$ position then if $t<n$ then $\pi$ be decomposed into $\pi_1$ and $\pi_2$ being restrictions on $[1,t]$ and $[t+1,n]$ respectively and again induction finishes the proof. The last remaining case is when $t=n$. Note that for that to occur $i$ must be $3.$ Thus we can just consider the restriction of $\pi$ on $[2,n-1]$. Now by hypothesis the number of singletons in this set must be at least $(n-2)/4.$ However since $i=3$ and this count is an integer , it must be at least $(n+1)/4,$ and hence we are done.
\end{proof}

We also need the following lemma which is a corollary of Lemma \ref{poly100}.

\begin{lem}
\label{refine}
Under the hypothesis of part (iii) of Lemma \ref{poly100}, for any $\pi\in NC(n)$ there exists a non crossing refinement $\pi'$ of $\pi$ with parts as in Lemma \ref{poly100} (iii), where the lowest power of $\beta$ in the  cumulant $k_{\pi'}$ (as a polynomial of $\beta$) is not larger than the corresponding term in $k_{\pi}$.   
\end{lem}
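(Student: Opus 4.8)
The plan is to reduce Lemma \ref{refine} to the single‑block situation, which is essentially contained in Lemma \ref{poly100}(i) and (iii), and then to reassemble the block‑by‑block refinements into one global non‑crossing partition; the one point requiring care is that refining the blocks of a non‑crossing partition \emph{internally} by non‑crossing partitions again yields a non‑crossing partition.

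First I would dispose of the trivial case: if $k_\pi=0$ there is nothing to prove (such $\pi$ contribute nothing to any moment), so assume $k_\pi\neq 0$. Write $\pi=\{B_1,\dots,B_r\}$. By multiplicativity of cumulants over the blocks of a partition, $k_\pi=\prod_{t=1}^r k_{|B_t|}(a^{s_i}:i\in B_t)$, and since $\R[\beta]$ is an integral domain the smallest power of $\beta$ dividing a product of polynomials is the sum of the smallest powers dividing the factors. So it suffices to produce, for each block $B=B_t$ carrying signs $s_i\in\{\pm1\}$, a partition $\sigma_B$ of the set $B$ — non‑crossing for the linear order that $B$ inherits from $\{1,\dots,n\}$ — all of whose parts are singletons or pairs $(a,a^{-1})$, and such that the smallest power of $\beta$ occurring in $k_{\sigma_B}$ is at most that occurring in $k_{|B|}(a^{s_i}:i\in B)$. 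Then $\pi':=\sigma_{B_1}\cup\cdots\cup\sigma_{B_r}$ will be the required refinement.

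For a single block this is precisely the construction in the proof of Lemma \ref{poly100}(iii), read with respect to the order on $B$: delete successive pairs of positions of $B$ carrying $a$ and $a^{-1}$ that are consecutive among the as‑yet‑unpaired positions of $B$, until only $\alpha_B:=|\sum_{i\in B}s_i|$ unpaired positions remain, which become singletons. Each pair joins two positions consecutive among the currently unpaired elements, so $\sigma_B$ is non‑crossing within $B$; moreover $k_{\sigma_B}=(1-\beta^2)^{(|B|-\alpha_B)/2}\beta^{\alpha_B}$ has smallest power exactly $\alpha_B$, whereas by Lemma \ref{poly100}(i) the polynomial $k_{|B|}(a^{s_i}:i\in B)$ is divisible by $\beta^{\alpha_B}$, hence has smallest power at least $\alpha_B$; this proves the block claim. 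Reassembling, I would check $\pi'\in NC(n)$: a crossing quadruple $i<j<k<l$ of $\pi'$ (with $i\sim_{\pi'}k$, $j\sim_{\pi'}l$, $i\not\sim_{\pi'}j$) either lies inside a single block $B_t$ of $\pi$, contradicting $\sigma_{B_t}\in NC(B_t)$, or has $i,k$ in one block and $j,l$ in a different block of $\pi$, contradicting $\pi\in NC(n)$; since $\pi'$ refines $\pi$ and each $B_t$ carries powers of a single plaquette variable, $\pi'$ also has the monochromatic‑block form used in the applications. Then the smallest power of $\beta$ in $k_{\pi'}=\prod_t k_{\sigma_{B_t}}$ is $\sum_t\alpha_{B_t}\le\sum_t(\text{smallest power in }k_{|B_t|})=$ smallest power in $k_\pi$, which is the assertion. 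The only genuine subtlety — and the main thing I would take care to get right — is this order‑compatibility: the pairing step of Lemma \ref{poly100}(iii) must be carried out with ``consecutive'' meaning consecutive unpaired positions of $B$, so that $\sigma_B$ is non‑crossing as a partition sitting inside $\{1,\dots,n\}$, not merely as an abstract partition of $|B|$ points.
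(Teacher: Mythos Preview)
Your proof is correct and follows exactly the same approach as the paper's: apply Lemma \ref{poly100}(iii) block by block and use multiplicativity of $k_\pi$. You are simply more explicit than the paper about the one point it glosses over, namely that internally refining each block of $\pi\in NC(n)$ by a non-crossing partition of that block yields $\pi'\in NC(n)$; your crossing-quadruple argument handles this cleanly.
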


\begin{proof} 
For any $\pi\in NC(n)$ with parts $P_1,P_2\ldots$ apply Lemma \ref{poly100} (iii) for each of factors of $k_\pi(\cdot)$, namely $k_{|P_1|}(\cdot), k_{|P_2|}(\cdot), \ldots $. Since for each of the factors the lowest power of $\beta$ does not increase by the previous lemma, we are done.   
\end{proof}

We are now ready to finish the proof of Proposition \ref{l1}.

\begin{proof}[Proof of Proposition \ref{l1}]  Consider the loop $\ell_k$ and the corresponding word 
$\gamma=aba^{-1}b^{-1} \underset{k \text{ times}}{\ldots} aba^{-1}b^{-1}$. According to our notation we have $w(\ell_k,\beta)=\phi(\gamma)$  (see \eqref{e:series} and Proposition \ref{con1}). Also, by \eqref{inver1} 
\begin{equation}\label{sumcumulant}
\phi(\gamma)=\sum_{\pi \in NC(4k)}k_\pi(\gamma),
\end{equation}
where every part of $\pi$ solely contains elements from $\{a,a^{-1}\}$ or from $\{b,b^{-1}\},$ (the mixed cumulants vanish since $a,b$ are asymptotically free by Proposition \ref{con1}). Now refining each such part using Corollary \ref{refine}, we get a $\pi'$  where each part is a singleton or a pair $(a,a^{-1}), (b,b^{-1}),$   where the lowest power of $\beta$ in the  cumulant $k_{\pi'}$ (as a polynomial of $\beta$) does not increase. However for any such $\pi',$ by Lemma \ref{min}, there exists at least $k$ singletons. Hence $k_{\pi'}=\beta^{k}P(\beta)$ for some  polynomial $P(\beta).$ This completes the proof that $a_j(\ell_k)=0$ for $j<k$. That $a_j(\ell_k)=0$ for $j>4k$ is an easy consequence of Lemma \ref{poly100} (ii), and \eqref{sumcumulant}.

Note that any $\pi$ in \eqref{sumcumulant} can be decomposed into  $\pi_1$ and $\pi_2$ (the restrictions to the symbols $\{a,a^{-1}\}$ and $\{b,b^{-1}\}$ respectively). Thus both $\pi_1, \pi_2 \in NC(2k).$
Moreover $\pi_1$ and $\pi_2$ jointly satisfy the non-crossing property if and only if   $\pi_2 \preceq K(\pi_1),$ where $K(\pi_1)$ is the Kreweras complement\footnote{ It is a useful notion in free probability theory denoting the maximal connected components in the complement of the parts of $\pi_1.$ For formal definitions see Speicher \cite{speichernote}.} of $\pi_1$.
Thus using Lemma \ref{poly100} (ii), the coefficient of $\beta^{4k}$ in $\phi(\gamma)$ is 
\begin{align*}
\sum_{\pi_1 \in NC(2k)}\mu(\mathbf{0},\pi_1)\sum_{\pi_2 \in NC(2k), \,\pi_2 \preceq K(\pi_1)}\mu(\mathbf{0},\pi_2)
&=\sum_{\pi_1 \in NC(2k)}\mu(\mathbf{0},\pi_1)\mathbf{1}(\mathbf{0},K(\pi_1))\\
&=\mu(\mathbf{0}_{2k},\mathbf{1}_{2k})=-C_{2k-1},
\end{align*}
where $C_{2k-1}$ is the $(2k-1)$-th Catalan number. 
The first equality follows from \eqref{convolution}. The second follows from the well known fact that $K(\pi_1)=0$ is equivalent to $\pi_1=\mathbf{1}.$

\end{proof}

\section{Non-crossing partitions and string trajectories in any dimension}
\label{s:hd}
In this section we analyze the same loop as in the previous section for higher dimensional lattice gauges. 
The basic approach is to present a correspondence between string trajectories that appear in \cite{Cha15} and a set of non-crossing pair partitions. The inspiration comes from the proof of Lemma \ref{p:lbcounterex} in the planar case where the statement was reduced to Lemma \ref{min} (using Free Probability theory). However the aforesaid reduction continues to hold in any dimension. This completes the proof of Proposition \ref{p:lbcounterex} in any dimension.

This approach could be of general interest for understanding other loop statistics beyond the planar case. 
For any loop $\ell,$ consider a trajectory $\chi(\ell)$ starting from $\ell$ and ending at $\emptyset$, (the null string). Recall that a trajectory is a sequence of loop sequences where consecutive sequences are obtained by splitting or deformations. For more details see \cite[Corollary 3.5]{Cha15}. We shall work with fixed representations of $\ell$ and all loop sequences in $\chi(\ell)$, i.e., the loops will start and end at some fixed points and order of loops in a sequence will be fixed (one way to do this is to take the minimal representation considered by Chatterjee \cite{Cha15}, but this is unimportant for our purposes).
Let $t$ be the length of the trajectory (thought of as time). We will construct  $f_t(\ell)=e_1,e_2,\ldots,e_j,$  a sequence of edges which will not be reduced and will have many backtracks, corresponding to $\chi(\ell)$ with the following properties:\\
$\bullet$ $f_t(\ell)$ will be a closed walk i.e. the starting point of every edge $e_{i+1}$ is the ending point of $e_i;$  moreover the first point of  $e_1$ and the last point of $e_j$ would be the same as that of $\ell.$\\
$\bullet$ One can find an embedding of $\ell$ inside $f_{t}(\ell),$ i.e. there exists $1\le i_1 <i_2<\ldots < i_k \le j$  such that $\ell=e_{i_1}e_{i_2}\ldots e_{i_k}.$\\
$\bullet$ Backtrack erasure of $f_t(\ell)$ gives $\emptyset.$\\
$\bullet$ Edges of  $f_{t}(\ell)$ will be paired into partners (based on the backtrack erasure). The partner relation will also be inductively defined.

This is done by induction on the length of the trajectory. For any $\ell,$ equal to the null loop, up to backtrack erasures,  we define  $f_0(\ell)=\ell$. The partner relation is defined by paring each edge with its reversal in such a way that sequential backtrack erasure along partners (in some order) reduces $\ell$ to the null loop. Let us consider the first step in the trajectory.  We consider the following cases:

\begin{itemize}
\item For $\ell=aeb$ and positive deformation at the edge $e$ with the plaquette $p=ced.$ Then $f_{t}(\ell)=f_{t-1}(aedceb).$

\item For $\ell=aeb$ and negative deformation at the edge $e$ with the plaquette $p=ced.$ Then let $\ell_1=ac^{-1}d^{-1}b.$ Hence by induction $$f_{t-1}(\ell_1)=\gamma_1a\gamma_2c^{-1}\gamma_3d^{-1}\gamma_4b\gamma_5$$ where $\gamma_i$ are walks (recall that $\ell_1$ has an embedding in $f_{t-1}(\ell_1)$ by induction). We define $$f_{t}(\ell)=\gamma_1a ee^{-1}\gamma_2c^{-1}\gamma_3d^{-1}\gamma_4b\gamma_5.$$
Note that both the properties stated above continue to hold true by construction and induction.
We also keep track of the backtrack erasures. We  say that $\{e,e^{-1}\}$ are partners, having defined the partner relationship in $f_{t-1}(\ell_1)$ by induction. 
\item For $\ell=aebec$ and positive splitting at $e,$ let $\ell_1=aec$ and $\ell_2=eb$.  Now $\chi(\ell)$ by definition, naturally decomposes into $\chi(\ell_1)$ and $\chi(\ell_2)$ of smaller lengths $t_1$ and $t_2$ respectively. Thus by induction,  the above property of embedding of $\ell_1$ and $\ell_2$ in $f_{t_1}(\ell_1),$ and $f_{t_2}(\ell_2)$ respectively, holds. Let $f_{t_1}(\ell_1)= \gamma_1a\gamma_2e\gamma_3c\gamma_4.$ Then define $$f_{t}(\ell)=\gamma_1a f_{t_2}(\ell_2)\gamma_2e\gamma_3c\gamma_4.$$
The partners of $f_{t}(\ell)$ are the union of partners of $f_{t_1}(\ell_1)$ and $f_{t_2}(\ell_2).$
\item Negative splitting for $\ell=aebe^{-1}c,$ at $e,$ creates $\ell_1=ac$ and $\ell_2=b.$ Again like before let $f_{t_1}(\ell_1)= \gamma_1a\gamma_2c\gamma_3.$
Define, $$f_{t}(\ell)=\gamma_1ae f_{t_2}(b)e^{-1}\gamma_2c\gamma_3.$$
The partners of $f_{t}(\ell)$ are the union of partners of $f_{t_1}(\ell_1)$ and $f_{t_2}(\ell_2)$ and $\{e,e^{-1}\}.$
\end{itemize} 

It is easy to see that all the properties, $f_{t}(\ell)$ was promised to satisfy, continue to hold by induction. 

\noindent
\textbf{Observation}: The partner relationship induces a non crossing pair partition of $\{1,2,\ldots,{\rm{len}}(f_t(\ell))\}$ where ${\rm{len}}(f_t(\ell))$ denotes the length of the sequence $f_t(\ell).$
Since by construction an embedding of $\ell,$ sits inside $f_t(\ell),,$ we colour the corresponding $\rm{len}(\ell)$ points in $\{1,2,\ldots,{\rm{len}}(f_t(\ell))\},$ blue and colour everything else red.

We now restrict our attention to the blue points. Note that the partner relationship induces a non-crossing partition on only the blue points as well where all blue points with red partners form singleton parts by themselves.  
To illustrate we consider the following example: (recall the plaquettes $a$ and $b$ from Figure \ref{loop2}).

\noindent
\textbf{Example:}
Let $\ell=e_1e_2e_3e_4.$ 
We consider the following trajectory:\\
$\bullet$ Positive deformation by $b$ at $e_1.$\\
$\bullet$ Negative deformation by $b$ at $e_{-1}$\\
$\bullet$ Negative deformation by $a$ at $e_{1}$

$$f_{3}(\ell)= \textcolor{red}{e_1}\textcolor{red}{(e^{-1}_1e_{-2}^{-1}e^{-1}_{-1}e^{-1}_{0})}\textcolor{red}{e_0e_{-1}e_{-2}}\textcolor{blue}{e_1}\textcolor{red}{(e_1^{-1}e_4^{-1}e_3^{-1}e_2^{-1})}\textcolor{blue}{e_2e_3e_4}.$$

\begin{figure}[h]
\centering
\includegraphics[width=1\textwidth]{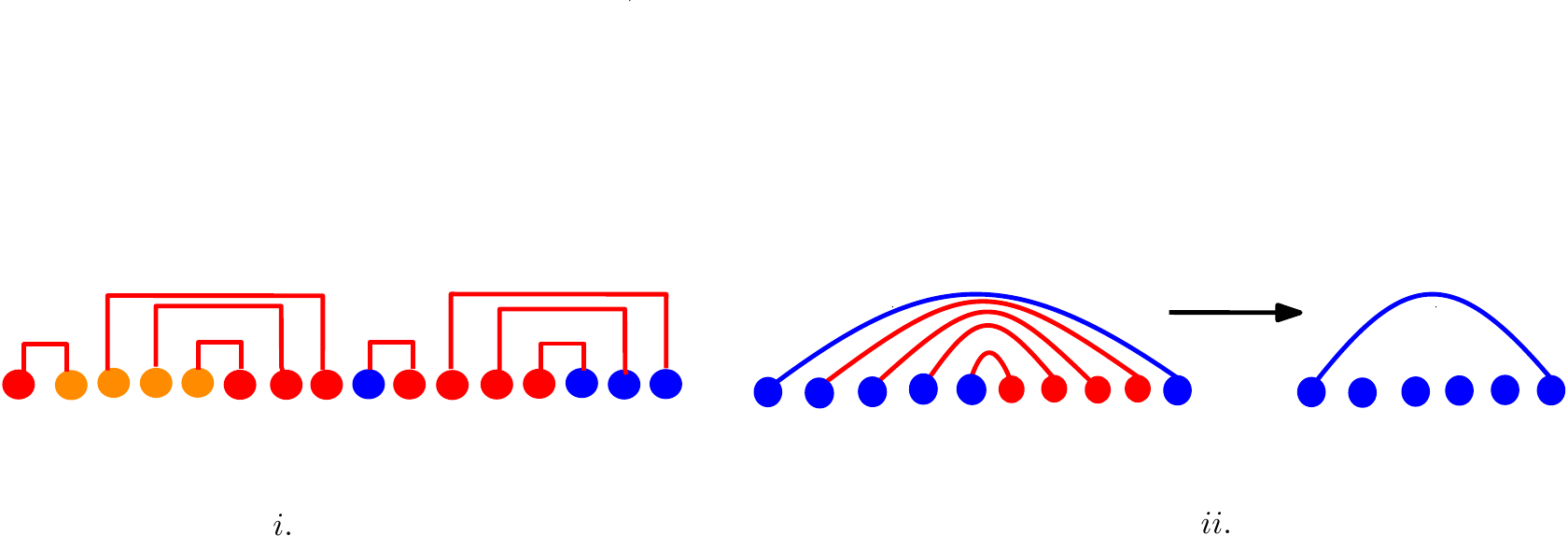}
\caption{$i.$ Shows the non-crossing partition induced by the partner relationship for the loop $f_3(\ell)$ in the example discussed above.  The blue vertices denote the embedding of $\ell$ and the edges denote the partner relationship. $ii.$ A general example of restriction of  a partner relationship to the  blue vertices.}
\label{noncross}
\end{figure}

Using the above, we now finish off the section with the following lemma, which shows that the same loop considered in Section \ref{s:comp} satisfies the conclusion of Proposition \ref{p:lbcounterex} in any dimension.
\begin{lem}\label{count1}
Consider the loop $\ell=(aba^{-1}b^{-1})(aba^{-1}b^{-1})\ldots \text{repeated}~n\text{ times}$ in $\Z^d$ where $a$ and $b$ are two adjacent plaquettes sharing a horizontal edge $e_1$. This is same as the loop $\ell_n$ from Proposition \ref{l1} but now considered in a general dimension. Then $$\lim_{N\to \infty}\frac{\langle \Tr(W_{\ell})\rangle}{N}\le (C\beta) ^{n}$$  for $\beta>0$ and for some universal constant $C$ depending only on the ambient dimension $d$.
\end{lem}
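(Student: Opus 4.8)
The plan is to run the trajectory-to-noncrossing-partition correspondence constructed above, so that, exactly as in the planar case but now without any recourse to free probability, the estimate reduces to the combinatorial input of Lemma \ref{min}.

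First I would recall that, by Chatterjee's lattice string theory (\cite[Theorem 3.1]{Cha15} together with the discussion following Lemma \ref{min1}), the coefficient $a_j(\ell_n)$ in \eqref{e:series} receives contributions only from vanishing trajectories $\chi$ from $\ell_n$ to $\emptyset$ that use exactly $j$ deformation steps; in particular, if every vanishing trajectory from $\ell_n$ uses at least $n$ deformations then $a_j(\ell_n)=0$ for all $j<n$. Granting this, the $d$-dimensional analogue of the coefficient growth bound of Lemma \ref{growthrate100} (with a constant depending on $d$) gives, for $\beta$ sufficiently small,
\begin{equation*}
\bigl|\,w(\ell_n,\beta)\,\bigr| \;=\; \Bigl|\sum_{j\ge n} a_j(\ell_n)\,\beta^{j}\Bigr| \;\le\; (4C)^{4n}\sum_{j\ge n}(C^{5}\beta)^{j} \;\le\; 2\,(4C)^{4n}(C^{5}\beta)^{n}\;\le\;(C'\beta)^{n}
\end{equation*}
for a constant $C'=C'(d)$, and the left-hand side is the asserted limit by Theorem \ref{t:looplimit}. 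So the whole task reduces to showing that every vanishing trajectory from $\ell_n$ uses at least $n$ deformations.

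To prove that, I would fix such a trajectory $\chi$ and carry out the construction of this section, producing the closed walk $f_t(\ell_n)$ together with the non-crossing pair partition of $\{1,\dots,\mathrm{len}(f_t(\ell_n))\}$ given by the partner relation. Colouring the vertices of the embedded copy of $\ell_n$ blue and the rest red, the Observation above identifies the restriction of the partner partition to the blue vertices with a non-crossing partition $\pi$ of the edge-word of $\gamma=(aba^{-1}b^{-1})^{n}$, each block of which is a singleton (the blue edge's partner being red) or a pair of mutually reverse blue edges. Transcribing the inductive argument of Lemma \ref{min} to this edge-word --- equivalently, first coarsening $\pi$ to the non-crossing partition it induces on the $4n$ plaquette-letters $a,b,a^{-1},b^{-1}$ --- then shows that $\pi$ has at least $n$ singleton blocks. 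It remains to charge these singletons to deformations: by the inductive rules for $f_t$, every red edge is produced by a deformation step (negative deformation inserts a pair $e,e^{-1}$; positive deformation $f_t(\ell)=f_{t-1}(aedceb)$ introduces the extra edges of the plaquette and the repeated edge), whereas splittings only rearrange existing edges; one then builds an injection from the set of blue singletons (each of which has a red partner) into the set of deformation steps of $\chi$, giving at least $n$ deformations.

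The hard part will be that last step: showing, through the recursive definition of $f_t$, that the red partners of distinct blue singletons can be assigned to distinct deformation steps, in particular keeping the red edges created by positive deformations under control. The coarsening of $\pi$ to the plaquette-letter level needed to invoke Lemma \ref{min} also requires a brief verification that it does not lose singletons. The remaining ingredients --- the $d$-dependence of the growth bound and the geometric summation --- are routine.
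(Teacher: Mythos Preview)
Your overall plan is exactly the paper's: build $f_t(\ell_n)$, extract the induced non-crossing partition on the blue edges, invoke Lemma \ref{min} to force at least $n$ singletons, charge their red partners to distinct deformation steps, and finish with the $d$-dimensional growth bound and geometric sum. Where your sketch diverges from the paper is in the passage from the $16n$-edge word to a $4n$-letter word, and this is precisely where the paper supplies the trick that dissolves what you flag as the ``hard part''.

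Your proposed coarsening to plaquette-letters is genuinely problematic: the shared edge $e_1$ lies in both $a$ and $b$, so a blue $e_1$ belonging to an $a$-block can be partnered with a blue $e_1^{-1}$ belonging to a $b^{-1}$-block, and your coarsening would then merge letters of different type, destroying the hypothesis of Lemma \ref{min}. The paper sidesteps this by \emph{restricting} (not coarsening) to the $4n$ blue occurrences of $e_3,e_3^{-1},e_{-1},e_{-1}^{-1}$, where $e_3$ is the edge of $a$ opposite $e_1$ and $e_{-1}$ the edge of $b$ opposite $e_1$. Since $e_3$ occurs only in $a,a^{-1}$ and $e_{-1}$ only in $b,b^{-1}$, the restricted word is literally of the form $aba^{-1}b^{-1}\cdots$, so Lemma \ref{min} applies directly with no coarsening and no risk of losing singletons. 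This yields $\geq n$ blue singletons, each partnered with a red edge among $\{e_3,e_3^{-1},e_{-1},e_{-1}^{-1}\}$.

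The injectivity into deformations is then immediate from one geometric observation: red edges are introduced only by deformation steps, each deformation introduces the edges of a single plaquette, and since $e_3$ and $e_{-1}$ are parallel edges two units apart no plaquette in $\Z^d$ can contain more than one of $\{e_3,e_{-1}\}$. Hence each deformation contributes at most one red edge to this restricted set, giving $\geq n$ deformations. So your acknowledged hard part --- building the injection and controlling positive deformations --- evaporates once you track the right edges.
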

\begin{proof}
Let $a=e_1e_2e_3e_4$ and $b=e_1e_0e_{-1}e_{-2}$ as in Figure \ref{loop2}. For any trajectory $\chi(\ell),$ consider the non-crossing partition as in Figure \ref{noncross}, moreover restricted to the blue points corresponding to the occurrences of edges $e_3,e_3^{-1},e_{-1},e_{-1}^{-1}$.  Note that there are $4n$ such points.

By Lemma \ref{min}, the number of singleton parts, among this is at least $n$. This implies that there were at least $n$ red points added along $\chi(\ell)$ which were paired to the blue points. Notice that a new red point can only be added during a deformation step. Also note that any plaquette can contribute at most one edge among $e_3,e_3^{-1},e_0,e_0^{-1}$, it follows that each such red point must have been added during distinct deformation steps. Thus at least $n$ deformations were made. Observe that as shown in Lemma 10.1 of \cite{Cha15}, Lemma \ref{growthrate100} continues to hold in any dimension (the constant $K$ changes depending on the ambient dimension) and hence we are done as in the planar case.  
\end{proof}

\begin{remark}\label{generalarea} Note that the above method of proof is robust and  can be used to show that for any other loop $\gamma$ which differs from $\ell$ by at most (say) $n/10$ edges, any vanishing trajectory from $\gamma$ must need at least $n-n/5$ many deformations. 
\end{remark}

\section{Concluding Remarks and Open questions}
\label{s:oq}
In this paper we have evaluated explicitly certain Wilson loop expectations in the strongly coupled planar $SO(N)$ Lattice gauge theory following the fundamental work of Chatterjee \cite{Cha15}, using some combinatorial analysis and connections with free probability. A natural further direction is to investigate if Chatterjee's recursion can be used to conclude about loop statistics in higher dimensions. For example, whether there are random surface  (randomness induced by the appropriate Gibbs measure) analogues of decorated trees  encountered in Section \ref{pla}, representing trajectories in higher dimension, remains a very interesting question. 

We saw that the area law lower bound, in the planar setting, was true for simple loops (see Corollary \ref{c:arealb}) but not quite true in general (see Proposition \ref{p:lbcounterex}) in any dimension. The latter was proved by constructing a loop  with bounded area but which required a large number of deformations to be reduced to the null loop. 
A natural question (pointed out by Chatterjee) therefore  is whether a version of Corollary \ref{c:arealb} is true in general for any $\ell$ if one replaces the $\rm{area}(\ell)$ term in the exponent in the lower bound  by the minimum number of deformations needed to reduce $\ell$ to the null loop.

Through \cite[Corollary 3.4]{Cha15} understanding of the plaquette statistics can be used to understand the log-partition function for the Lattice Gauge Gibbs measure. Thus Theorem \ref{pla100} implies 
$$\lim_{N\to \infty} \frac{\log Z_{\Lambda_N,N,\beta}}{N^2 |\Lambda_N|}=\frac{\beta^2}{2}$$ in the strong coupling regime, ($Z_{\Lambda_N,N,\beta}$ is the partition function for the Gibbs measure with inverse coupling constant $\beta$, matrices in $SO(N)$ and the box $\Lambda_N=[-M_N,M_N]^2$ where $M_N$ is a sequence of positive integers increasing to infinity). 
For $U(N)$ Lattice Gauge theory, using mean field approximations to eigenvalue distributions, Wadia \cite{Wadia} does a computation predicting the log partition function throughout the entire regime of $\beta$ which is supposed to exhibit a phase transition at $\beta=1,$  established rigorously by \cite{LDPunitary}.  

Physics literature also predicts that up to some re-parametrization the large $N$ limit of the Wilson loop expectations should be same for several different Gauge groups. Our conclusions in the planar regime agree with the predictions by Wadia \cite{Wadia} and Gross-Witten \cite{GW80} for the $U(N)$ lattice Gauge theory with the inverse coupling constant $2\beta$. As already mentioned in Remark \ref{reparam}, for $SU(N)$ lattice gauge theory, (where the trace in the Hamiltonian in \eqref{e:gibbs} is replaced by the real part of the trace) on $\Z^d$ for any $d\geq 2$, the same was established by Jafarov \cite{JJ16}, i.e., the large $N$ limit of loop expectation for a fixed loop $\ell$ at the inverse coupling constant $2\beta$ for the  $SU(N)$ lattice gauge theory is the same as the limiting loop expectation for $\ell$ at inverse coupling strength $\beta$ in the $SO(N)$ lattice gauge theory. In particular this implies our conclusions regarding loop expectations will all be valid for  $SU(N)$ lattice gauge theory too, in the 't Hooft limit after an appropriate change of variable. 

We finish by sketching a proof of Proposition \ref{p:lsd}.
Observe that under the aforementioned change of variable $\beta \mapsto 2\beta,$ this agrees with the limiting eigenvalue distribution obtained in \cite{LDPunitary}. 

\begin{proof}[Proof of Proposition \ref{p:lsd}]
As mentioned before, we rely on the method of moments. Recall that $F_{N}$ denotes the empirical spectral distribution of a plaquette variable $Q_{p}$ (for $N$ sufficiently large so that the plaquette $p$ is contained in $\Lambda_{N}$). Clearly $F_{N}$ is supported on $S^1$. Let $\langle F_{N} \rangle$ denote the expected empirical spectral measure, (i.e., for any bounded continuous $f$, $\int f {\rm d}\langle F_{N}\rangle =\langle \int f {\rm d} F_{N} \rangle$). Let $\Theta_{N}$ denote a random variable taking values in $[0,2\pi)$ such that $e^{i\Theta_{N}}$ has the same distribution as $\langle F_{N} \rangle$. For this proof, let $\E$ denote the expectation with respect to $\langle F_{N}\rangle $, and hence, $\E e^{ik\Theta_{N}}=\frac{1}{N}\langle \Tr Q_p^{k}\rangle$, for all integer $k$. Taking real parts, it follows from Theorem \ref{pla100}, that as $N\to \infty$, $\langle {\E} \cos (k\Theta_N) \rangle\to \beta$ if $k=1$ and zero for all larger integer values of $k$. By a simple calculation, one can check that this determines the limiting moment sequence $ \E \cos^k \Theta_{N}$ as follows: 
$$\lim_{N\to \infty} {\E} \cos^k (\Theta_N) = \left \{ \begin{array}{cc}
\dfrac{\binom{k}{\frac{k-1}{2}}}{2^{k-1}}\beta; &  k~ \text{is odd},\\
\dfrac{\binom{k}{\frac{k}{2}}}{2^{k}}; & k~\text{is even}.
 \end{array}
 \right.$$
As the variable $\cos \Theta_{N}$ as these variable are uniformly bounded, the moment sequence determines the limiting distribution of $\cos \Theta_{N}$. Now observe that the eigenvalues of $SO(N)$ occur in complex conjugate pairs and hence the limiting distribution of $\Theta_{N}$ is completely determined the limiting distribution of $\cos \Theta _{N}$. Lastly, by another easy calculation, it follows that  the moment sequence of $\cos \Theta$ where $\Theta$ is distributed according to $F$ matches the limiting moment sequence obtained above. This completes the proof of weak convergence of $\langle F_{N} \rangle$ to $F$. To upgrade this to convergence in probability it suffices to show that the variance (with respect to the Lattice Gauge measure of $\frac{1}{N} \Tr Q_p^{k}$ converges to 0 for each $k$. This is a straightforward consequence of of the factorization theorem (Theorem \ref{t:factor}) and we omit the details.
\end{proof}


For an elaborate list of related  open questions, see \cite{Cha15}.


\section*{Acknowledgements}
We are extremely grateful to Sourav Chatterjee for many illuminating discussions and helpful comments on a preliminary version of the paper. We also thank an anonymous referee for useful comments and suggestions. R.B.'s research is partially supported by an AMS-Simons Travel Grant, S.G.'s research is supported by a Miller Research Fellowship at UC Berkeley.

\bibliography{latticegauge}
\bibliographystyle{plain} 
\end{document}